\documentclass[11pt]{amsart}

\hoffset         -0.3in
\voffset          -0.3in
\textwidth       5.6in
\textheight      8.5in

\usepackage{amsmath,amsfonts,amssymb,amsthm}
\usepackage{txfonts}
\usepackage{latexsym,bm,graphicx}
\usepackage{mathrsfs}
\usepackage{color}

\title[Local Li-Yau's Estimates]{Local Li-Yau's estimates on  $RCD^*(K,N)$ metric measure spaces}
\author{Hui-Chun Zhang}
\address{Department of Mathematics\\  Sun Yat-sen University\\ Guangzhou 510275\\ E-mail address: zhanghc3@mail.sysu.edu.cn}
\author{Xi-Ping Zhu}
\address{Department of Mathematics\\  Sun Yat-sen University\\ Guangzhou 510275\\ E-mail address: stszxp@mail.sysu.edu.cn}

\newtheorem{thm}{Theorem}[section]
\newtheorem{prop}[thm]{Proposition}
\newtheorem{lem}[thm]{Lemma}

\newtheorem{cor}[thm]{Corollary}

\theoremstyle{definition}
\theoremstyle{remark}

\newtheorem{defn}[thm]{Definition}
\newtheorem{rem}[thm]{Remark}

\numberwithin{equation}{section}

\newcommand{\ls}{\leqslant}
\newcommand{\gs}{\geqslant}

\newcommand{\ip}[2]{\langle{#1},{#2}\rangle}

\newcommand{\R}{\mathbb{R}}

\newcommand{\La}{\mathscr{L}}
\newcommand{\E}{{\mathscr E}}

\begin{document}

\begin{abstract}In this paper, we will study the (linear) geometric analysis on metric measure spaces.
We will establish a local Li-Yau's estimate for weak solutions of the heat equation and prove  a sharp
Yau's gradient gradient for harmonic functions on metric measure spaces, under the Riemannian curvature-dimension condition $RCD^*(K,N)$.
\end{abstract}

\maketitle
\tableofcontents
\setcounter{tocdepth}{1}

\section{Introduction}

 In the field of geometric analysis, one of the fundamental results is the following Li-Yau's local gradient estimate for solutions
  of the heat equation on a complete Riemannian manifold.
 \begin{thm}[Li-Yau  \cite{ly86}]\label{thm-ly} Let $(M^n,g)$ be an $n$-dimensional complete Riemannian manifold, and
  let $B_{2R} $ be a geodesic ball of  radius $2R$  centered at  $O\in M^n$. Assume that $Ric(M^n)\gs-k$ with $k\gs0$.
  If $u(x,t)$ is a smooth positive solution of the heat equation $\Delta u=\partial_tu$ on $B_{2R}\times(0,\infty)$, then for any $\alpha>1$,
  we have  the following gradient estimate in $B_R$:
 \begin{equation}\label{eq-ly}
 \sup_{x\in B_R}\big( |\nabla f|^2  -\alpha\cdot  \partial_t f \big)(x,t)\ls \frac{C\alpha^2}{R^2}\Big(\frac{\alpha^2}{\alpha^2-1}+\sqrt kR\Big)+\frac{n\alpha^2k}{2(\alpha-1)}+\frac{n\alpha^2}{2t}
 \end{equation}
where $f:=\ln u$ and $C$ is a constant depending only on $n$.
 \end{thm}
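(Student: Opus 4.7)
The strategy is the classical maximum principle argument of Li and Yau. Set $f:=\ln u$; from the heat equation $\Delta u=\partial_tu$ one obtains the pointwise identity $\partial_tf-\Delta f=|\nabla f|^2$, so the quantity to bound is $F:=|\nabla f|^2-\alpha\,\partial_tf$. The plan is to apply the parabolic maximum principle to the localized function $G(x,t):=t\,\phi(x)\,F(x,t)$ on $B_{2R}\times[0,T]$ for a suitable cutoff $\phi$, derive an algebraic inequality for $G$ at its maximum point, and then set $T=t$ and restrict to $B_R$.

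The first main step is a Bochner-type computation for $(\partial_t-\Delta)F$. Bochner's formula
\[
\tfrac12\Delta|\nabla f|^2=|\nabla^2f|^2+\ip{\nabla f}{\nabla\Delta f}+\mathrm{Ric}(\nabla f,\nabla f),
\]
combined with the commutation $[\Delta,\partial_t]=0$, gives
\[
(\partial_t-\Delta)F=-2|\nabla^2f|^2-2\,\mathrm{Ric}(\nabla f,\nabla f)+2\ip{\nabla f}{\nabla F}.
\]
Inserting $\mathrm{Ric}\gs-k$ and the pointwise estimate $|\nabla^2f|^2\gs(\Delta f)^2/n$ yields
\[
(\partial_t-\Delta)F\ls -\tfrac{2}{n}(\Delta f)^2+2k|\nabla f|^2+2\ip{\nabla f}{\nabla F}.
\]
Substituting $\Delta f=-\tfrac{1}{\alpha}\bigl(F+(\alpha-1)|\nabla f|^2\bigr)$ expands the good term $-(2/n)(\Delta f)^2$ into $-(2/n\alpha^2)F^2$, a cross term proportional to $F\cdot|\nabla f|^2$, and a positive multiple of $|\nabla f|^4$, which is the key structural feature driving the estimate.

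The second step is localization. Choose $\phi\in C^2(B_{2R})$ with $\phi\equiv1$ on $B_R$, $\mathrm{supp}\,\phi\subset B_{2R}$, and (via Laplace comparison under $\mathrm{Ric}\gs-k$)
\[
|\nabla\phi|^2/\phi\ls C/R^2,\qquad \Delta\phi\gs -C(1+\sqrt k\,R)/R^2.
\]
At the maximum point $(x_0,t_0)$ of $G=t\phi F$ we have $\nabla G=0$ (so $\phi\nabla F=-F\nabla\phi$), $\Delta G\ls0$, and $\partial_tG\gs0$. Plugging the differential inequality for $F$ into $(\partial_t-\Delta)(t\phi F)\gs 0$, multiplying through by $t_0\phi$, and using the cutoff bounds reduces the problem to a quadratic algebraic inequality for the single scalar $H:=(t\phi F)(x_0,t_0)$; solving this quadratic and using $\phi=1$ on $B_R$ then gives the stated pointwise bound.

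The main obstacle is the algebraic bookkeeping. Each cross term arising from $\phi\nabla F=-F\nabla\phi$ must be absorbed by Cauchy--Schwarz into the good $-(2/n\alpha^2)F^2$ and $|\nabla f|^4$ terms, and each such absorption costs a factor of $\alpha/(\alpha-1)$; this is the origin of the singularity of the estimate at $\alpha=1$ and of the coefficient $n\alpha^2k/2(\alpha-1)$ in the final bound. The $\sqrt k R$ term traces back to the Laplace comparison estimate for $\Delta\phi$, while the $n\alpha^2/(2t)$ term arises from differentiating the factor $t$ in $G=t\phi F$, which contributes $\phi F$ to $\partial_tG$. Carrying out these absorptions while keeping the constants sharp in $n$ is the technical core of the argument.
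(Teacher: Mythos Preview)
Your outline is the classical Li--Yau argument and is correct. Note, however, that the paper does not actually prove Theorem~\ref{thm-ly}: it is quoted from \cite{ly86} as background. The paper only sketches the simplest case (compact manifold, $\mathrm{Ric}\gs 0$, no cutoff) in the introduction, deriving the differential inequality
\[
\Big(\Delta-\frac{\partial}{\partial t}\Big)F\gs -2\ip{\nabla f}{\nabla F}+\frac{2}{nt}F^2-\frac{F}{t}
\]
for $F=t(|\nabla f|^2-\partial_tf)$ and applying the maximum principle. Your proposal fills in exactly what the paper omits---the localization via the cutoff $\phi$, the Laplace comparison bound on $\Delta\phi$, and the algebraic absorption of cross terms---and matches both the original argument in \cite{ly86} and the structure of the paper's own proof of the $RCD^*$ analogue in Lemma~\ref{lem5.9}. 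So your approach is essentially the same as the paper's (sketched and generalized) treatment; there is nothing to correct.
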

By letting $R\to\infty$ in (\ref{eq-ly}), one gets a global gradient estimate, for any $\alpha>1$, that
\begin{equation}\label{eq1.2}
 |\nabla f|^2 -\alpha\cdot \partial_t f\ls   \frac{n\alpha^2k}{2(\alpha-1)}+\frac{n\alpha^2}{2t}.
 \end{equation}

There is a rich literature on extensions and improvements of the Li-Yau inequality, both the local version (\ref{eq-ly})
 and the global version (\ref{eq1.2}),
 to diverse settings and evolution equations,
for example, in the setting of Riemannian manifolds with Ricci curvature bounded below \cite{dav89,bl06,sz06,lx11,lee12}, in the setting of  weighted Riemannian manifolds with Bakry-Emery Ricci curvature bounded below  \cite{bq99,li05,qia14,bbg14} and some non-smooth setting  \cite{bhllmy15,qzz13},  and so on.

Let $(X,d,\mu)$ be a complete, proper metric measure space  with ${\rm supp}(\mu)=X.$
The  \emph{curature-dimension condition} on  $(X,d,\mu)$  has been introduced by Sturm \cite{stu06}
 and Lott-Villani  \cite{lv09}. Given $K\in \R $ and $N\in[1,\infty]$, the curvature-dimension condition $CD(K,N)$ is
  a synthetic notion  for ``generalized Ricci curvature   $\gs K$ and dimension $\ls N$" on  $(X,d,\mu)$.
  Bacher-Sturm \cite{bs10} introduced the \emph{reduced} curvature-dimension condition $CD^*(K,N)$, which
  satisfies a local-to-global property. On the other hand, to  rule out Finsler geometry, Ambrosio-Gigli-Savar\'e \cite{ags15} introduced
   the  \emph{Riemannian} curvature-dimension condition $RCD(K,\infty)$, which assumes that the heat flow on $L^2(X)$ is linear.
Remarkably,  Erbar-Kuwada-Sturm \cite{eks15} and Ambrosio-Mondino-Savar\'e \cite{ams15} introduced a dimensional
 version of Riemannian curvature-dimension condition $RCD^*(K,N)$  and proved that it  is equivalent to a Bakry-Emery's Bochner inequality via an abstract $\Gamma_2$-calculus for semigroups. In the case of Riemannian geometry, the notion  $RCD^*(K,N)$ coincides with the original  Ricci curvature $\gs K$ and dimension $\ls N$, and for the case of  the weighted manifolds $(M^n,g,e^\phi\cdot {\rm vol}_g)$, the notion $RCD^*(K,N)$ coincides with the corresponding  Bakry-Emery's curvature-dimension condition (\cite{stu06,lv09}). In the setting of  Alexandrov geometry,
  it is implied by generalized (sectional) curvature bounded below in the sense of Alexandrov \cite{pet11,zz10}.

Based on the  $\Gamma_2$-calculus for the heat flow $(H_tf)_{t\gs0}$  on $L^2(X)$, many important results
 in geometric analysis have been obtained on a  metric measure space $(X,d,\mu)$ satisfying $RCD^*(K,N)$ condition.
 For instance, Li-Yau-Hamilton estimates for the heat flow $(H_tf)_{t\gs0}$ \cite{gm14,jia15,jiang-z16} and
   spectral gaps \cite{lv07,qzz13,jz16} for the infinitesimal generator of $(H_tf)_{t\gs0}$.

In this paper, we will study the \emph{locally} weak solutions of the heat equation on a metric measure space $(X,d,\mu)$.
Let $\Omega\subset X$  be  an open set. The $RCD^*(K,N)$ condition implies that the Sobolev space $W^{1,2}(\Omega)$ is a Hilbert space.
Given an interval $I\subset\mathbb R$, a function $u(x,t)\in W^{1,2}(\Omega\times I)$ is called a \emph{locally weak solution}
 for the heat equation on $\Omega\times I$ if it  satisfies
\begin{equation}\label{eq1.3}
-\int_I\int_{\Omega}\!\ip{\nabla u}{\nabla \phi}d\mu dt = \int_I\int_{\Omega}\!\frac{\partial u}{\partial t}\cdot\phi d\mu dt
\end{equation}
for all Lipschitz functions $\phi$ with compact support in $\Omega\times I$, where the inner product $\ip{\nabla u}{\nabla \phi}$ is given
 by polarization in $W^{1,2}(\Omega)$.

Notice that the  locally  weak solutions $u(x,t)$ do not form a semi-group in general.  The method of $\Gamma_2$-calculus for the heat flow
 in the previous works \cite{gm14,jia15,jz16} is no longer be suitable for the problems on locally weak solutions of the heat equation.

To seek an  appropriate method to deal with the locally weak solutions for the heat equation,  let us recall  what is the proof of Theorem \ref{thm-ly} in the smooth context.  There are two main ingredients:
 the Bochner formula and a  maximum principle.
 The Bochner formula states that
\begin{equation}\label{bochner-formula}
\frac{1}{2}\Delta |\nabla f|^2\gs \frac{(\Delta f)^2}{n}+\ip{\nabla f}{\nabla \Delta f}+K|\nabla f|^2
\end{equation}
for any $C^3$-function $f$ on $M^n$  with Ricci curvature  $Ric(M^n)\gs K$ for some $K\in \mathbb R$. The maximum principle  states that if $f(x)$ is of $C^2$ on $M^n$ and if it achieves its a local maximal value at point $x_0\in M^n$, then we have
\begin{equation}\label{maximum}
\nabla f(x_0)=0\qquad {\rm and }\qquad \Delta f(x_0)\ls 0.
\end{equation}
For simplification, we only consider the special case  that  $u(x,t)$ is a smooth positive solution for heat equation on  a compact manifold  $M^n$ with  $Ric(M^n)\gs0$.  By using the Bochner formula to $\ln u$, one  deduces a differential inequality
$$\Big(\Delta -\frac{\partial}{\partial t}\Big) F\gs -2\ip{\nabla f}{\nabla F}+\frac{2}{nt}F^2-\frac{F}{t},$$
where $f=\ln u$ and $F=t\big(|\nabla f|^2-\partial_tf\big).$ Then by  using the maximum principle to  $F$  at one of its maximum points $(x_0,t_0)$, one gets the desired Li-Yau's estimate
$$  \max F=F(x_0,t_0)\ls\frac{n}{2}.$$

In this paper,  we   want to extend these two main ingredients to non-smooth metric measure spaces.
Firstly, let us consider the Bochner formula in non-smooth context.  Let $(X,d,\mu)$ be a metric measure space with $RCD^*(K,N)$. Erbar-Kuwada-Sturm \cite{eks15} and Ambrosio-Mondino-Savar\'e \cite{ams15}  proved that  $RCD^*(K,N)$ condition is  equivalent to a Bakry-Emery's Bochner inequality for the heat flow $(H_tf)_{t\gs0}$ on $X$. This provides a global version of Bochner formula for the infinitesimal generator of the heat flow $(H_tf)_{t\gs0}$ (see Lemma \ref{lem-boch}). On the other hand, a good cut-off function has been obtained in \cite{ams15,mn14,hkx13}. By combining these two facts and an argument in \cite{hkx13}, one can localize the global version of Bochner formula in \cite{eks15,ams15} to  a local one.

To state the local version of Bochner formula, it is more  convenient to work with a notion of
 the weak Laplacian, which is a slight modification from \cite{gig15,gm13}.  Let $\Omega\subset X$ be an open set. Denote
 by  $H^1(\Omega):=W^{1,2}(\Omega)$ and  $H^1_0(\Omega):=W^{1,2}_0(\Omega)$.
The \emph{weak Laplacian} on $\Omega$ is an operator $\mathscr L$ on $H^1(\Omega)$ defined by:  for each  function $f\in H^1(\Omega)$, $\mathscr Lf$ is a functional acting on $H^1_0(\Omega)\cap L^\infty(\Omega)$ given by
$$\mathscr Lf(\phi):=-\int_\Omega\ip{\nabla f}{\nabla \phi}d\mu\qquad \forall\ \phi\in H^1_0(\Omega)\cap L^\infty(\Omega).$$
In the case when it holds
\begin{equation*}
\mathscr Lf(\phi)\gs \int_\Omega h\cdot\phi d\mu\qquad \forall\ 0\ls \phi\in H^1_0(\Omega)\cap L^\infty(\Omega)
\end{equation*}
 for some function $h\in L^1_{\rm loc}(\Omega)$, then it is well-known \cite{hom89} that the weak Laplacian $\La f$ can be extended to a signed Radon measure on $\Omega$. In this case, we denote by
 $$\La f\gs h\cdot\mu$$
 on $\Omega$ in the sense of distributions.

Now, the local version of Bochner formula is given as follows.
 \begin{thm}[\cite{ams15,hkx13}]\label{thm-bochner}
 Let $(X,d,\mu)$ be a metric measure space with $RCD^*(K,N)$ for some $K\in \mathbb R$ and $N\gs1$. Assume that $f\in H^{1}(B_R)$ such that $\mathscr Lf$ is a signed measure on $B_R$  with the density $g\in H^1(B_R)\cap L^\infty(B_R)$.
Then we have $|\nabla f|^2\in H^{1}(B_{R/2}) \cap L^\infty(B_{R/2})$  and that  $\La(|\nabla  f |^2)$ is a signed Radon measure  on
 $B_{R/2}$ such that
\begin{equation*}
 \frac{1}{2}\La(|\nabla  f |^2 ) \gs  \Big[\frac{g^2}{N}+\ip{\nabla f}{\nabla  g}+K|\nabla f|^2\Big]\cdot\mu
 \end{equation*}
on  $B_{R/2}$ in the sense of distributions.
\end{thm}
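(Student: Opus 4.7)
The plan is to reduce the local Bochner claim on $B_R$ to the \emph{global} Bakry--Emery Bochner inequality on $X$ guaranteed by the $RCD^*(K,N)$ condition (Lemma \ref{lem-boch}), by cutting $f$ off to a globally defined function, regularizing by the heat semigroup, applying the global Bochner, and localizing back to $B_{R/2}$. This follows the scheme of \cite{hkx13}.

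First I will invoke the good cut-off function construction of \cite{ams15,mn14,hkx13}: for any $R/2<r_1<r_2<R$ there is a Lipschitz $\chi:X\to[0,1]$ with $\chi\equiv 1$ on $B_{r_1}$, $\mathrm{supp}\,\chi\subset B_{r_2}$, and $|\nabla\chi|,\La\chi\in L^\infty(X)$. Set $\tilde f:=\chi f$ (extended by zero), so $\tilde f\in H^1(X)$. A Leibniz-rule computation for the weak Laplacian identifies $\La\tilde f=\tilde g\cdot\mu$ globally on $X$, with density $\tilde g:=\chi g+2\ip{\nabla\chi}{\nabla f}+f\,\La\chi$; note that $\tilde g\equiv g$ on $B_{r_1}$. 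To put $\tilde f$ into the domain of the global generator I will regularize by $\tilde f_s:=H_s\tilde f$ for $s>0$, so that Lemma \ref{lem-boch} applies and gives, on $X$ in the sense of signed Radon measures,
$$\frac12\La\big(|\nabla\tilde f_s|^2\big)\gs\Big[\frac{(\La\tilde f_s)^2}{N}+\ip{\nabla\tilde f_s}{\nabla\La\tilde f_s}+K|\nabla\tilde f_s|^2\Big]\mu.$$

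Pairing this inequality against an arbitrary $0\ls\psi\in H^1_0(B_{R/2})\cap L^\infty(X)$ and sending $s\downarrow 0$, I will use that $\tilde f\equiv f$ and $\La\tilde f=g\cdot\mu$ on $\mathrm{supp}\,\psi\subset B_{R/2}\subset B_{r_1}$ to see that the limit is precisely the distributional Bochner inequality asserted in the theorem. The regularity statement $|\nabla f|^2\in H^1(B_{R/2})\cap L^\infty(B_{R/2})$ will then be extracted from the uniform-in-$s$ estimates furnished by the Bochner inequality itself, combined with lower semicontinuity of the Cheeger energy.

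The main obstacle will be this passage to the limit, especially in the cross term $\int\psi\ip{\nabla\tilde f_s}{\nabla\La\tilde f_s}\,d\mu\to\int\psi\ip{\nabla f}{\nabla g}\,d\mu$, and the simultaneous extraction of the $L^\infty$ bound on $|\nabla f|^2$. Both require $\tilde g\in H^1(X)\cap L^\infty(X)$, which through the term $\ip{\nabla\chi}{\nabla f}$ amounts to $|\nabla f|\in L^\infty_{\rm loc}(B_R)$; this is not in the hypotheses. The remedy I have in mind is a bootstrap: first apply the scheme above at an intermediate scale $r_1<r_2'<R$ using only the test-function (integrated) form of Bochner, in which no derivative falls on $\tilde g$, and run a Moser iteration on the resulting differential inequality for $|\nabla\tilde f_s|^2$ (exploiting $g\in L^\infty(B_R)$) to obtain $|\nabla f|\in L^\infty_{\rm loc}(B_R)$. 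With that bound $\tilde g$ genuinely lies in $H^1(X)\cap L^\infty(X)$, so re-running the scheme at the smaller scale $r_1$ lets the limit $s\downarrow 0$ pass in every term and yields both the asserted regularity of $|\nabla f|^2$ and the Bochner inequality on $B_{R/2}$.
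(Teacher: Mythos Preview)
Your cut-off-and-globalize strategy is the same as the paper's, but the paper avoids both of your detours by using two ready-made tools that you are essentially proposing to reprove.

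First, the $L^\infty$ bound on $|\nabla f|$: the paper does not bootstrap via Moser iteration but simply invokes the Lipschitz regularity result for the Poisson equation (Lemma~\ref{lip}, due to Jiang and Jiang--Koskela--Yang), which under $RCD^*(K,N)$ immediately gives $|\nabla f|\in L^\infty_{\rm loc}(B_R)$ from $g\in L^\infty(B_R)$. This makes $\Delta_{\mathscr E}\tilde f=\chi g+f\,\Delta_{\mathscr E}\chi+2\ip{\nabla f}{\nabla\chi}\in L^2(X)\cap L^\infty(X)$ with compact support.

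Second, and more importantly, the paper does not use the form of Bochner you wrote (Lemma~\ref{lem-boch}(i), which requires $\Delta_{\mathscr E}f\in\mathbb V$ and contains the troublesome term $\ip{\nabla(\Delta_{\mathscr E}f)}{\nabla f}$). It uses instead the \emph{modified} Bochner formula~(\ref{modi-bochner}) of Lemma~\ref{lem-boch}(ii), whose hypotheses are only $\Delta_{\mathscr E}\tilde f\in L^4\cap L^2$ and whose cross term is already integrated by parts: $\int\Delta_{\mathscr E}\tilde f\cdot\ip{\nabla\tilde f}{\nabla\phi}+\phi\,(\Delta_{\mathscr E}\tilde f)^2\,d\mu$. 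This applies directly to $\tilde f$ with no heat-semigroup regularization and no limit $s\downarrow0$. It also yields $|\nabla\tilde f|^2\in\mathbb V$ as part of its statement, giving the $H^1(B_{R/2})$ regularity of $|\nabla f|^2$ for free. Finally, to recover the term $\int\phi\ip{\nabla f}{\nabla g}\,d\mu$ on $B_{R/2}$, the paper tests $\mathscr Lf=g$ against $g\phi\in H^1_0(B_{R/2})\cap L^\infty$; this uses only the \emph{given} hypothesis $g\in H^1(B_R)$, never that $\tilde g\in H^1(X)$.

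This last point matters for your plan: even after your bootstrap establishes $|\nabla f|\in L^\infty_{\rm loc}$, the claim $\tilde g\in H^1(X)$ is not obvious, because the term $\ip{\nabla\chi}{\nabla f}$ has no reason to lie in $H^1$ without some second-order information on $f$ or $\chi$. Your approach can in fact be salvaged by integrating by parts \emph{before} sending $s\downarrow0$ (at which stage $H_s\tilde g\in\mathbb V$), so that the limit only needs $\tilde g\in L^2\cap L^\infty$; but this is precisely what the modified Bochner formula has already packaged for you.
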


Next, we consider to extend the maximum principle (\ref{maximum}) from smooth Riemannian manifolds to non-smooth metric measure spaces $(X,d,\mu)$.
A simple observation is that  the maximum principle (\ref{maximum}) on a smooth manifold $M^n$ has  the following equivalent form:

\emph{Suppose that $f(x)$ is of $C^2$ on $M^n$ and that it achieves its a local maximal value at point $x_0\in M^n$. Given any $w\in C^1(U)$ for some neighborhood  $U$ of $x_0$. Then we have}
  $$\Delta f(x_0)+\ip{\nabla f}{\nabla w}(x_0)\ls0.$$
In the following result, we will extend the observation to  the non-smooth context.  Technically, it is  our main effort in the paper.
 \begin{thm}\label{thm1.3}
Let $\Omega$ be a bounded domain in a metric measure space $(X,d,\mu)$ with $RCD^*(K,N)$ for some $K\in\mathbb R$ and $N\gs1.$
Let $f(x)\in H^{1}(\Omega)\cap L^\infty_{\rm loc}(\Omega)$  such that  $\La f $ is a signed Radon measure with $\La^{\rm sing}f\gs 0$, where $\La^{\rm sing}f$ is the singular part with respect to $\mu$.
Suppose that $f$ achieves one of its   strict maximum in  $\Omega$ in the sense that: there exists a neighborhood $U\subset\subset \Omega$ such that
\begin{equation*}
\sup_{U}f>\sup_{\Omega\backslash U}f.
\end{equation*}
Then, given any $w\in  H^{1}(\Omega)\cap L^\infty(\Omega)$, there exists a sequence of points $\{x_j\}_{j\in\mathbb N}\subset U$  such that they are the approximate continuity points of $\La^{\rm ac}f$ and $\ip{\nabla f}{\nabla w}$, and that
$$f(x_j)\gs \sup_\Omega f-1/j\qquad{\rm and }\qquad  \La^{\rm ac}f(x_j)+\ip{\nabla f}{\nabla w}(x_j)\ls 1/j.$$
Here and in the sequel of this paper, $\sup_{U}f$ means  ${\rm ess}\sup_{U}f$.
\end{thm}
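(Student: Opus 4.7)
\emph{Setup and test function.} Write $M := \sup_\Omega f$. The plan is to test the weak Laplacian against a \emph{drift-type} nonnegative function $\phi_j = e^w(f-c_j)_+$ supported near the maximum; integration by parts will produce the cross-term $\ip{\nabla f}{\nabla w}$ automatically, and the pointwise conclusion will then follow by contradiction. The strict maximum hypothesis gives $\delta > 0$ with $\sup_{\Omega\setminus U} f \ls M - \delta$. For each integer $j > 1/\delta$, set $c_j := M - 1/j$, so that $\sup_{\Omega\setminus U} f < c_j < M$ and hence $(f-c_j)_+$ vanishes on $\Omega\setminus U$ with $U\subset\subset\Omega$. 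By the Sobolev chain rule for Lipschitz truncation, $(f-c_j)_+\in H^1(\Omega)\cap L^\infty(\Omega)$ with $\nabla(f-c_j)_+ = \chi_{\{f>c_j\}}\nabla f$, and the compact support in $\Omega$ places it in $H^1_0(\Omega)$. The Lipschitz-composition chain rule gives $e^w\in H^1(\Omega)\cap L^\infty(\Omega)$ with $\nabla e^w = e^w\nabla w$, and the Leibniz rule for $H^1\cap L^\infty$ products then yields $\phi_j \in H^1_0(\Omega)\cap L^\infty(\Omega)$ with $\nabla\phi_j = e^w(f-c_j)_+\nabla w + e^w\chi_{\{f>c_j\}}\nabla f$.

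\emph{Integral inequality and contradiction.} Pairing $\phi_j$ against $\La f$,
$$\int_\Omega\phi_j\,d(\La f) = -\int_\Omega\ip{\nabla f}{\nabla\phi_j}\,d\mu = -\int_\Omega e^w(f-c_j)_+\ip{\nabla f}{\nabla w}\,d\mu - \int_{\{f>c_j\}}e^w|\nabla f|^2\,d\mu.$$
Decomposing $\La f = \La^{\rm ac}f\cdot\mu + \La^{\rm sing}f$ and using $\phi_j\gs 0$ together with $\La^{\rm sing}f\gs 0$,
$$\int_\Omega e^w(f-c_j)_+\big[\La^{\rm ac}f + \ip{\nabla f}{\nabla w}\big]\,d\mu \ls -\int_{\{f>c_j\}}e^w|\nabla f|^2\,d\mu \ls 0.$$
If no $x_j\in U$ satisfying the theorem's conclusion existed, then $\La^{\rm ac}f+\ip{\nabla f}{\nabla w}>1/j$ would hold $\mu$-a.e.\ on $\{f>c_j\}$; since $M=\sup_\Omega f$ forces $\mu(\{f>c_j\})>0$ and $\phi_j>0$ there,
$$0 < \frac{1}{j}\int_\Omega\phi_j\,d\mu \ls \int_\Omega\phi_j\big[\La^{\rm ac}f+\ip{\nabla f}{\nabla w}\big]\,d\mu \ls 0,$$
a contradiction. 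Hence $A_j := \{x\in U : f(x) > c_j,\ \La^{\rm ac}f(x) + \ip{\nabla f}{\nabla w}(x)\ls 1/j\}$ has positive $\mu$-measure, and since $\mu$-a.e.\ point of $\Omega$ is an approximate continuity point of both $\La^{\rm ac}f$ and $\ip{\nabla f}{\nabla w}$, any such approximate continuity point $x_j\in A_j$ does the job. For the finitely many small $j$ with $1/j\gs\delta$ the conclusion is trivial and one takes any point of $U$ with $f(x_j)>M-1/j$.

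\emph{Main obstacle.} The principal technical point is ensuring, within the $RCD^*(K,N)$ Sobolev calculus, that $\phi_j = e^w(f-c_j)_+$ is a bona fide element of $H^1_0(\Omega)\cap L^\infty(\Omega)$ with the expected weak gradient, and that the pairing $\int\phi_j\,d(\La f)$ against the signed Radon measure $\La f$ is well defined. This demands the chain rule for Lipschitz composition of $H^1$-functions, the Leibniz rule for products of $H^1\cap L^\infty$ functions, and the inclusion of compactly supported $H^1$-functions into $H^1_0$, all of which are standard in the $RCD^*$ framework but need to be invoked explicitly. The geometric content of $RCD^*(K,N)$ enters only through this background Sobolev calculus rather than the final estimate, which is purely a test-function argument built on the strict maximum hypothesis and the sign of $\La^{\rm sing}f$.
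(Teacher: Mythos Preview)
Your argument is correct and in fact more direct than the paper's. The paper proceeds in two stages: it first proves a weighted Kato inequality $\La_w f_+ \gs \chi[f\gs 0]\cdot\La_w f$ for the operator $\La_w = e^w\La + e^w\ip{\nabla w}{\nabla\cdot}$ (Proposition~4.1), and then, assuming the conclusion fails, applies this Kato inequality to $(f-(M-\varepsilon_0))$ to obtain $\La_w(f-(M-\varepsilon_0))_+\gs 0$, finally invoking the weak maximum principle of Cheeger \cite[Theorem~7.17]{che99} to force $(f-(M-\varepsilon_0))_+\equiv 0$. Your single test function $\phi_j = e^w(f-c_j)_+$ collapses both steps: the weight $e^w$ manufactures the drift term $\ip{\nabla f}{\nabla w}$ upon integration by parts, and the nonnegativity of $\int_{\{f>c_j\}}e^w|\nabla f|^2\,d\mu$ plays the role of the weak maximum principle. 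What you gain is a self-contained argument that bypasses both auxiliary results; what the paper gains is a reusable Kato inequality, which it also needs for the parabolic version (Theorem~4.4).

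Two small points. First, your inequality $\La f(\phi_j)\gs\int\phi_j\,\La^{\rm ac}f\,d\mu$ really does need the approximation you allude to: since $\phi_j$ is only defined $\mu$-a.e., the pairing with $\La^{\rm sing}f$ is not literally an integral, so one should approximate $\phi_j$ in $H^1$ by nonnegative, uniformly bounded $Lip_0$ functions with supports in a fixed compact set (truncate, take positive part, multiply by a cutoff) and pass to the limit on each side separately. Second, your treatment of the finitely many small $j$ is not quite right: taking ``any point with $f(x_j)>M-1/j$'' does not furnish the Laplacian bound. The fix is trivial---once you have $x_J$ for all large $J$, set $x_j:=x_{J}$ for the small indices---but as written it is a slip.
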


This result is close to the spirit of  the Omori-Yau maximum principle \cite{omo67,y75}.  It has also some similarity with the approximate versions of the maximum principle developed, for instance by Jensen \cite{jen88}, in the theory of second order viscosity solutions.

A similar parabolic version of the maximum principle, Theorem \ref{max-parabolic}, will be given in \S4.

After obtaining the above Bochner formula and  the maximum principle (Theorem \ref{thm-bochner} and Theorem \ref{max-parabolic}), we will show the following  Li-Yau type gradient estimates for locally weak solutions of the heat equation, which is our main purpose in this paper.
\begin{thm}\label{thm1.4}
 Let $K\gs 0$ and $N\in[1,\infty)$, and let $(X,d,\mu)$ be a metric measure space satisfying  $RCD^*(-K,N)$.
Let $T_*\in(0,\infty]$ and let  $B_{2R}$ be a geodesic ball of radius $2R$ centered at $p\in X$, and let  $u(x,t)\in W^{1,2}\big(B_{2R}\times(0,T_*)\big)$ be a positive locally weak solution of the heat equation on $B_{2R}\times(0,T_*)$. Then,  given any $T\in(0,T_*)$, we have the following local gradient estimate
 \begin{equation}\label{eq1.6}
 \begin{split}
 \sup_{B_R\times(\beta\cdot T,T]}\Big(|\nabla f|^2-\alpha\cdot \frac{\partial}{\partial t}f\Big)(x,t) \ls& \max\bigg\{1, \frac{1}{2}+\frac{K T}{2(\alpha-1)}\bigg\} \cdot \frac{N\alpha^2}{2 T}\cdot  \frac{1}{\beta^2}\\
 &\ \ +\frac{C_{N}\cdot \alpha^4 }{R^2 (\alpha-1)} \cdot  \frac{1}{(1-\beta)\beta^2}+ C_{N}\cdot \frac{\alpha^2}{\beta^2}\cdot  \Big(\frac{\sqrt K}{R}+\frac{1}{R^2}\Big)
\end{split}
 \end{equation}
for any $\alpha>1$ and any $\beta\in(0,1)$, where $f=\ln u$, and $C_{N}$ is a constant depending only on $N$. Here and in the sequel of this paper, $\sup_{B_R\times[a,b]}g$ means  ${\rm ess}\sup_{B_R\times[a,b]}g$ for a function $g(x,t)$.
 \end{thm}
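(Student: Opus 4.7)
My plan mirrors the classical Li-Yau argument outlined in the introduction, with Theorem~\ref{thm-bochner} replacing the smooth Bochner identity and the parabolic analogue of Theorem~\ref{thm1.3} (promised in \S4) replacing the smooth maximum principle. Set $f:=\ln u$. A parabolic Harnack inequality, available on $RCD^*(-K,N)$ spaces, gives local two-sided positive bounds on $u$, so $f$ is locally bounded and $|\nabla f|=|\nabla u|/u\in L^\infty_{\mathrm{loc}}$. Testing the heat equation against $\varphi/u$ and using the chain rule for the weak Laplacian yields $\La f=\partial_t f-|\nabla f|^2=:g$ in the distributional sense, with $g\in H^1\cap L^\infty$ on a slightly smaller cylinder; Theorem~\ref{thm-bochner} then provides the corresponding distributional Bochner inequality for $|\nabla f|^2$.

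Setting $F:=|\nabla f|^2-\alpha\,\partial_t f$ and using the identity $-\alpha g=F+(\alpha-1)|\nabla f|^2$, a direct rearrangement of the Bochner inequality together with a time-differentiation of $\La f=g$ gives the weak Li-Yau inequality
\begin{equation*}
(\La-\partial_t)F\gs\frac{2}{N\alpha^2}\bigl(F+(\alpha-1)|\nabla f|^2\bigr)^2-2\ip{\nabla f}{\nabla F}-2K|\nabla f|^2
\end{equation*}
on a smaller parabolic cylinder. The cross term $-2\ip{\nabla f}{\nabla F}$ is the crucial piece designed to be cancelled by the free test vector field in the parabolic maximum principle.

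Next, let $\eta(x)$ be a good cutoff on $B_{3R/2}$ equal to one on $B_R$ with $|\nabla\eta|\ls C_N/R$ and $|\La\eta|\ls C_N(\sqrt K/R+1/R^2)$, as constructed in \cite{ams15,mn14,hkx13}, and let $\psi(t)$ be smooth on $[0,T]$ with $\psi(0)=0$, $\psi(t)=t$ on $[\beta T,T]$, and $|\psi'|\ls C/(1-\beta)$. Apply the parabolic version of Theorem~\ref{thm1.3} to $G:=\eta^2\psi F$ with test field $w:=2f-4\ln\eta$: by design the $\nabla G$-contributions produced when expanding $\La(\eta^2\psi F)$ cancel exactly against $\ip{\nabla G}{\nabla w}$, and at the near-maximum points $(x_j,t_j)$ one is left with
\begin{equation*}
\frac{2\psi\eta^2}{N\alpha^2}\bigl(F+(\alpha-1)|\nabla f|^2\bigr)^2\ls \mathcal{E}+2/j,
\end{equation*}
where $\mathcal{E}$ collects error terms controlled by $\eta^2\psi K|\nabla f|^2$, $F\cdot|\nabla\eta|^2$, $F\cdot|\La\eta|$, $F\cdot|\psi'|$, and the mixed term $\eta\psi F|\nabla f||\nabla\eta|$.

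The mixed error is absorbed into the quadratic on the left via Young's inequality, and the standard algebraic identity $(F+(\alpha-1)x)^2\gs F^2/\alpha$ combined with a second Young absorption of $K|\nabla f|^2$ (bifurcating according to the sign of $KT-(\alpha-1)$ and producing the factor $\max\{1,\tfrac{1}{2}+\tfrac{KT}{2(\alpha-1)}\}$) yields an upper bound on $\sup(\eta^2\psi F)$ of the required form; letting $j\to\infty$ and restricting to $B_R\times[\beta T,T]$, where $\eta\equiv 1$ and $\psi\gs\beta T$, gives (\ref{eq1.6}). The principal obstacle is the rigorous execution of the localization: verifying that $G=\eta^2\psi F$ satisfies the hypotheses of the parabolic maximum principle, and in particular that $\La G$ is a signed Radon measure with nonnegative singular part, demands a careful application of the chain and Leibniz rules for $\La$ on $RCD^*$ spaces, together with the quantitative estimates on $\eta$ and the $H^1\cap L^\infty$-regularity of $g$ needed to invoke Theorem~\ref{thm-bochner}.
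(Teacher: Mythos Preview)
Your overall strategy---Bochner formula plus the parabolic maximum principle applied to a cutoff of the Li--Yau quantity---matches the paper's approach (Lemmas~\ref{lem5.8}--\ref{lem5.9}). However, there is a genuine gap at the very first step, and it is precisely the gap the paper spends most of \S5 addressing.

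You assert that $g=\partial_t f-|\nabla f|^2\in H^1\cap L^\infty$ on a smaller cylinder, that $|\nabla f|\in L^\infty_{\mathrm{loc}}$, and that one may time-differentiate $\La f=g$ to compute $\La(\partial_t f)$. None of these are available for a general locally weak solution: by definition $u\in W^{1,2}(B_{2R}\times(0,T_*))$ only gives $\partial_t u\in L^2$, not $L^\infty$ or $H^1$. The parabolic Harnack inequality yields two-sided bounds on $u$, not on $|\nabla u|$; the bound $|\nabla u|\in L^\infty_{\mathrm{loc}}$ in Lemma~\ref{lip} requires $\La u=\partial_t u$ with $\partial_t u\in L^\infty$, which you do not have. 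Likewise Theorem~\ref{thm-bochner} requires $g\in H^1\cap L^\infty$, and the computation of $\La(\partial_t f)$ in (\ref{eq5.6}) needs $\partial_t u\in H^1$ and $\partial_{tt}u\in L^2$. Without these, your differential inequality for $F$ is purely formal.

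The paper's device is to pass to the Steklov average $u_h(x,t)=\tfrac{1}{h}\int_0^h u(x,t+\tau)\,d\tau$ (Definition~\ref{steklov}). Lemma~\ref{lem5.4} shows $u_h$ and $\partial_t u_h=(u(\cdot,t+h)-u(\cdot,t))/h$ both lie in $H^1\cap L^\infty$, and Lemma~\ref{lem5.6} shows $u_h$ is still a weak solution. One then proves the estimate for $(u+\delta)_h$ via your outline (this is Lemma~\ref{lem5.9}, with the time cutoff realized by the factor $t$ rather than an auxiliary $\psi$), and finally lets $h\to 0$ and $\delta\to 0$. This regularization step is the ``principal obstacle,'' not the verification of the hypotheses of the maximum principle that you flag at the end.
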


The local boundedness and the Harnack inequality for locally weak solutions of the heat equation have been established by  Sturm \cite{stu95,stu96} in the setting of abstract local Dirichlet form and  by Marola-Masson  \cite{mm13} in the setting of metric measure with a standard volume doubling property and supporting a $L^2$-Poincare inequality. Of course, they are available on  metric measure spaces $(X,d,\mu)$ satisfying $RCD^*(K,N)$  for some $K\in\mathbb R$ and $N\in [1,\infty)$. In particular, any locally weak solutions for the heat equation  must be locally H\"older continuous.

As a consequence of Theorem \ref{thm1.4}, letting $R\to \infty$ and $\beta\to 1$, we get the following global gradient estimates.

\begin{cor}\label{cor1.5}
 Let $(X,d,\mu)$ and $K, N,T_*$ be as in the Theorem \ref{thm1.4}.  Let  $u(x,t)$ is a positive solution of the heat equation on $X\times(0,T_*)$.
Then, for almost all $T\in  (0,T_*)$,  the following  gradient estimate holds
 \begin{equation*}
 \sup_{x\in X}\Big(|\nabla f|^2-\alpha\cdot \frac{\partial}{\partial t}f\Big)(x,T) \ls \max\bigg\{1, \frac{1}{2}+\frac{K T}{2(\alpha-1)}\bigg\} \cdot \frac{N\alpha^2}{2 T}\ls \Big(1+\frac{K T}{2(\alpha-1)}
\Big) \cdot \frac{N\alpha^2}{2T}
 \end{equation*}
 for any $\alpha>1,$ where $f=\ln u.$
 \end{cor}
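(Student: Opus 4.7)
The result is a passage to the limit in Theorem \ref{thm1.4}. Since $u$ is defined on all of $X\times(0,T_*)$, estimate (\ref{eq1.6}) applies for any center $p\in X$, any $R>0$, and any $\beta\in(0,1)$. Setting
$$F(x,t):=|\nabla f|^2-\alpha\,\partial_tf,\qquad h(T):=\max\{1,\tfrac{1}{2}+\tfrac{KT}{2(\alpha-1)}\}\cdot\frac{N\alpha^2}{2T},$$
I note that the two ``boundary'' terms on the right-hand side of (\ref{eq1.6}), namely $C_{N}\alpha^4/[R^2(\alpha-1)(1-\beta)\beta^2]$ and $C_{N}\alpha^2\beta^{-2}(\sqrt K/R+1/R^2)$, both vanish as $R\to\infty$ with $\beta$ fixed. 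Taking a countable exhaustion $R_n\ua\infty$ and unioning the corresponding $(\mu\otimes dt)$-null exceptional sets, one obtains
$$F(x,t)\ls h(T)\beta^{-2}\qquad\text{for $(\mu\otimes dt)$-a.e.\ }(x,t)\in X\times(\beta T,T].$$

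Next, by Fubini, for a.e. $t\in(\beta T,T]$ this yields $\sup_{x\in X}F(x,t)\ls h(T)\beta^{-2}$ (in the essential sense, as in the convention of the paper). Running the same procedure with $T'$ ranging over a countable dense subset of $(0,T_*)$ and $\beta'$ over a countable dense subset of $(0,1)$, and unioning the resulting $dt$-null exceptional sets, produces a single $dt$-null set $\mathcal N\subset(0,T_*)$ with the following property: for every $T\in(0,T_*)\setminus\mathcal N$ and every admissible pair $(T',\beta')$ from the dense family with $\beta'T'<T\ls T'$, one has $\sup_{x\in X}F(x,T)\ls h(T')(\beta')^{-2}$. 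Letting $T'\downarrow T$ and $\beta'\ua 1$ along the dense family, which is feasible because the constraint $\beta'<T/T'$ relaxes to $\beta'<1$ as $T'\downarrow T$, and invoking continuity of $h$ on $(0,T_*)$, one concludes the first inequality of the corollary at every such $T$.

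Finally, the second inequality is elementary: since $K\gs 0$ and $\alpha>1$, the quantity $KT/[2(\alpha-1)]$ is non-negative, so each of $1$ and $\tfrac{1}{2}+\tfrac{KT}{2(\alpha-1)}$ is bounded above by $1+\tfrac{KT}{2(\alpha-1)}$, which therefore bounds their maximum. There is no substantive analytic obstacle here; the only mildly delicate point is the measure-theoretic bookkeeping needed to upgrade a bound on a space-time slab to a bound on an individual time slice, which is handled by Fubini together with a countable dense approximation as above.
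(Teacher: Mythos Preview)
Your proposal is correct and follows exactly the route the paper indicates, namely passing to the limit $R\to\infty$ and then $\beta\to1$ in Theorem~\ref{thm1.4}; the paper states this in one line without proof, while you have supplied the (straightforward but necessary) measure-theoretic bookkeeping via Fubini and countable dense parameters.
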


As  another application of the maximum principle, Theorem \ref{thm1.3},  and the Bochner formula, we will deduce a \emph{sharp} Yau's gradient estimate for harmonic functions on metric measure spaces satisfying  $RCD^*(-K,N)$ for $K\gs0$ and $N>1$.

Let us recall the classical local Yau's gradient estimate in geometric analysis (see \cite{cy75,y75,lw02}).
Let $M^n$ be an $n(\gs2)$-dimensional complete non-compact Riemannian manifold with  $Ric(M^n)\gs-k$ for some $k\gs0$. The local Yau's gradient estimate asserts that for any positive harmonic function $u$ on $B_{2R}$, then
\begin{equation}\label{eq1.7}
\sup_{B_{R}}|\nabla \ln u|\ls \sqrt{(n-1)k}+\frac{C(n)}{R}.
\end{equation}
 In particular, if $u$ is positive harmonic on $M^n$ and $Ric\gs-(n-1)$ on $M^n$ then it follows that $|\nabla \log u|\ls n-1$  on $M^n$. This result is sharp, in fact the equality case was characterized in \cite{lw02}. This means that for $k = n-1$ in (\ref{eq1.7}) the factor $\sqrt{n-1}$ on the right hand side is sharp.

Let   $(X,d,\mu)$  be a metric measure space satisfying $RCD^*(-K,N)$ for some $K\gs0$ and $N\in(1,\infty)$.
It was proved in \cite{jia14} the following form of Yau's gradient estimate that, for any positive harmonic function $u$ on $B_{2R}\subset X$, it holds
\begin{equation}\label{eq1.10}
\sup_{B_{R}}|\nabla \ln u|\ls C(N,K,R).
\end{equation}
In the setting of Alexandrov spaces, by using a Bochner formula and an argument of Nash-Moser iteration,  it was  proved in \cite{zz12,hx14} the following form of Yau's gradient estimate holds: given an $n$-dimensional Alexandrov space $M$ and a positive harmonic function $u$ on $B_{2R}\subset M$, if the generalized Ricci curvature on $B_{2R}\subset M$ has a lower bound $Ric\gs-k$,  $k\gs0$, in the sense of \cite{zz10}, then
\begin{equation*}
\sup_{B_{R}}|\nabla \ln u|\ls C_1(n)\sqrt k+\frac{C_2(n)}{R}.
\end{equation*}
 Indeed, by applying Theorem \ref{thm-bochner}, the same argument in \cite{zz12,hx14} implies this estimate still holds for harmonic function $u$ on a metric measure  space $(X,d,\mu)$ with $RCD^*(-k,n).$ However, it seems hopeless to improve the fact $C_1(n)$ to the sharp $\sqrt{n-1}$ in (\ref{eq1.7}) via a Nash-Moser iteration   argument.

The last result in this paper is to establish a sharp local Yau's gradient estimate on metric measure spaces with Riemannian curvature-dimension condition.
\begin{thm}\label{thm1.6}
 Let $K\gs 0$ and $N\in (1,\infty)$, and let $(X,d,\mu)$ be a metric measure space satisfying  $RCD^*(-K,N)$.
Let   $B_{2R}$ be a geodesic ball of radius $2R$ centered at $p\in X$, and let $u(x)$ be a positive harmonic function on $B_{2R}$. Then the following local Yau's gradient estimate holds
 \begin{equation}\label{eq1.9}
 \sup_{B_{R}}|\nabla \ln u|\ls   \sqrt{ \frac{1+\beta}{1-\beta}\cdot (N-1) K }+\frac{C(N)}{\sqrt{\beta(1-\beta) }\cdot R}
 \end{equation}
 for any $\beta\in(0,1).$ \end{thm}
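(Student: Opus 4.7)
Let $f:=\log u$ and $w:=|\nabla f|^2$. The chain rule for the weak Laplacian together with the harmonicity of $u$ yields the distributional identity $\La f = -w\cdot\mu$ as a signed Radon measure on $B_{2R}$. By the (non-sharp) Yau bound (\ref{eq1.10}) of \cite{jia14}, $w \in L^\infty_{\rm loc}(B_{2R})$, and a standard Caccioppoli-type argument together with the Moser iteration of \cite{stu95,mm13} upgrades this to $w \in H^1_{\rm loc}(B_{2R})$. Theorem \ref{thm-bochner}, applied with $g=-w$, then gives the distributional Bochner inequality
\[
\tfrac{1}{2}\La w \gs \Big[\tfrac{w^2}{N} - \ip{\nabla f}{\nabla w} - Kw\Big]\cdot\mu
\]
on any ball compactly contained in $B_{2R}$.

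The heart of the proof is to extract from this inequality the \emph{sharp} coefficient $(N-1)K$ (rather than the naive $NK$) via the maximum principle Theorem \ref{thm1.3}. Following \cite{ams15,mn14,hkx13}, choose a good cut-off $\eta$ with $\eta\equiv 1$ on $B_R$, supported in a slightly larger ball $B_{R'}\subset B_{2R}$, and satisfying $|\nabla\eta|^2+|\La\eta|\ls C(N)(1+\sqrt{K}R)/R^2$. Form the test function
\[
F := \eta^2 w,
\]
which lies in $H^1(B_{2R})\cap L^\infty(B_{2R})$ and vanishes near $\partial B_{2R}$; the singular parts of both $\La(\eta^2)$ (from the good cut-off) and $\La w$ (from the distributional Bochner inequality) are non-negative, so $\La^{\rm sing}F \gs 0$. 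After an arbitrarily small perturbation to ensure a strict essential maximum in some $U\subset\subset B_{2R}$, apply Theorem \ref{thm1.3} with the auxiliary direction $v := -2\alpha f$, where $\alpha = \alpha(\beta)\in\R$ will be tuned in terms of the given $\beta\in(0,1)$: this produces approximate continuity points $\{x_j\}\subset U$ with $F(x_j)\to\sup F$ and
\[
\La^{\rm ac}F(x_j) - 2\alpha\ip{\nabla F}{\nabla f}(x_j) \ls 1/j.
\]
Expanding $\La^{\rm ac}F$ by the product rule, substituting the Bochner inequality, and using $\nabla F(x_j)\to 0$ to eliminate $\ip{\nabla f}{\nabla w}$ in favour of $-2w\ip{\nabla f}{\nabla\eta}/\eta$, one is reduced to a quadratic inequality in $F(x_j)$. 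A $\beta$-weighted Cauchy–Schwarz splitting of the cross-term of the form $w^{3/2}|\nabla\eta|/\eta$ converts the coefficient $1/N$ of $w^2$ into $\tfrac{1-\beta}{(1+\beta)(N-1)}$ and produces error terms with prefactor $1/(\beta(1-\beta)R^2)$; passing to the limit $j\to\infty$ and solving the quadratic yields (\ref{eq1.9}).

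\textbf{Main obstacle.} The decisive step is the weighted Cauchy–Schwarz that trades $1/N$ for $1/(N-1)$; this is where the sharp Yau factor $\sqrt{(N-1)K}$ arises. Heuristically, one Hessian ``direction'' is already consumed by the identity $\La f = -w$ (namely the direction of $\nabla f$), reducing the effective dimension for the trace inequality $|\mathrm{Hess}\,f|^2\gs(\La f)^2/N$ to $N-1$; the parameter $\alpha=\alpha(\beta)$ prescribes exactly how to balance this gain against the loss incurred by the cross-term $w^{3/2}|\nabla\eta|/\eta$, producing the precise prefactors $(1\pm\beta)$ in (\ref{eq1.9}). Secondary technical issues include verifying the $H^1_{\rm loc}$-regularity of $w$ and the $H^1\cap L^\infty$-hypothesis of Theorem \ref{thm-bochner}, which are standard but require bootstrapping from (\ref{eq1.10}) in the $RCD^*(-K,N)$ setting.
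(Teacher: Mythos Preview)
Your proposal has a genuine gap at the decisive step. You invoke only the basic local Bochner inequality (Theorem~\ref{thm-bochner}), which yields
\[
\tfrac{1}{2}\La^{\rm ac}w \gs \frac{w^2}{N} - \ip{\nabla f}{\nabla w} - Kw,
\]
and then claim that a $\beta$-weighted Cauchy--Schwarz on the cross-term $w^{3/2}|\nabla\eta|/\eta$ will ``convert the coefficient $1/N$ of $w^2$ into $\tfrac{1-\beta}{(1+\beta)(N-1)}$.'' This cannot work: the cross-term enters with an unfavourable sign (after bounding $|\ip{\nabla f}{\nabla\eta}|\ls w^{1/2}|\nabla\eta|$), so any Young/Cauchy--Schwarz splitting $2w^{3/2}|\nabla\eta|/\eta\ls \epsilon w^2 + \epsilon^{-1}|\nabla\eta|^2\eta^{-2}w$ can only \emph{decrease} the positive coefficient of $w^2$ from $1/N$ to $1/N-\epsilon$. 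Since $1/(N-1)>1/N$, no such splitting produces the sharp coefficient, and the best you can get from your inequality at a maximum is $w\ls NK+O(R^{-2})$, not $(N-1)K$. Your heuristic---that the direction of $\nabla f$ accounts for one dimension---is correct, but this information has already been thrown away once you pass to the scalar inequality with $w^2/N$; it cannot be recovered by manipulations of the localized quadratic.

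The paper extracts the sharp constant \emph{before} localizing, by using the refined Bochner inequality of Corollary~\ref{lem6.1} (equation~(\ref{eq3.8})), which contains the extra nonnegative term $\tfrac{N}{N-1}\big(\tfrac{\ip{\nabla f}{\nabla w}}{2w}+\tfrac{w}{N}\big)^2$. Expanding this term and combining with the basic Bochner yields (see~(\ref{eq6.2}))
\[
\tfrac{1}{2}\La^{\rm ac}w \gs \frac{w^2}{N-1} - \frac{N-2}{N-1}\ip{\nabla f}{\nabla w} - Kw,
\]
with $1/(N-1)$ already in place. Only then does the paper localize with a cut-off $\phi$ and apply Theorem~\ref{thm1.3}. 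A secondary issue: you write ``using $\nabla F(x_j)\to 0$,'' but Theorem~\ref{thm1.3} does not give a pointwise gradient condition; it gives only the single inequality $\La^{\rm ac}F(x_j)+\ip{\nabla F}{\nabla w}(x_j)\ls 1/j$ for one chosen auxiliary $w$. The paper absorbs both the $\ip{\nabla f}{\nabla G}$ term and the $\ip{\nabla\phi}{\nabla G}/\phi$ term by taking the auxiliary function to be $w=2\tfrac{N-2}{N-1}f-2\ln\phi$, rather than relying on a separate vanishing-gradient condition.
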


\noindent\textbf{Acknowledgements.} H. C. Zhang is partially supported by NSFC 11571374. X. P. Zhu is partially supported by NSFC 11521101.

\section{Preliminaries}

Let $(X,d)$ be a complete metric space and $\mu$ be a Radon measure on $X$ with ${\rm supp}(\mu)=X.$ Denote by $B_r(x)$ the open ball centered at $x$ and radius $r$.
Throughout the paper, we assume that
$X$ is proper (i.e.,  closed balls of finite radius are compact). Denote by $L^p(\Omega):=L^p(\Omega,\mu)$ for any open set $\Omega\subset X$ and any $p\in[1,\infty].$
\subsection{Reduced and Riemannian curvature-dimension conditions}$\ $

Let $\mathscr P_2(X,d)$ be the $L^2$-Wasserstein space over $(X,d)$, i.e., the set of all Borel probability measures $\nu$ satisfying
$$\int_Xd^2(x_0,x)d\nu(x)<\infty$$
for some (hence for all) $x_0\in X$. Given two measures $\nu_1,\nu_2\in \mathscr P_2(X,d)$, the $L^2$-Wasserstein distance between them is given by
$$ W^2(\nu_0,\nu_1):=\inf\int_{X\times X}d^2(x,y)dq(x,y)$$
where the infimum is taken over all couplings $q$ of $\nu_1$ and $\nu_2$, i.e., Borel probability measures $q$ on $X\times X$ with marginals $\nu_0$ and $\nu_1.$
Such a coupling $q$ realizes the $L^2$-Wasserstein distance is called an \emph{optimal coupling} of $\nu_0$ and $\nu_1.$
Let $\mathscr P_2(X,d,\mu)\subset  \mathscr P_2(X,d)$ be the subspace of all measures absolutely continuous w.r.t. $\mu.$ Denote  by $\mathscr P_\infty(X,d,\mu)\subset  \mathscr P_2(X,d,\mu)$ the set of measures in  $\mathscr P_2(X,d,\mu)$ with bounded support.
\begin{defn}
Let $K\in\mathbb R$ and $N\in[1,\infty)$. A metric measure space $(X,d,\mu)$ is called to satisfy the \emph{reduced curvature-dimension condition} $CD^*(K,N)$ if any only if for each pair $\nu_0=\rho_0\cdot\mu,\nu_1=\rho_1\cdot \mu\in \mathscr P_\infty(X,d,\mu)$ there exist an optimal coupling $q$ of them and a geodesic $(\nu_t:=\rho_t\cdot\mu)_{t\in[0,1]}$ in $\mathscr  P_\infty(X,d,\mu)$ connecting them such that for all $t\in[0,1]$ and all $N'\gs N$:
\begin{equation*}
\begin{split}
\int_X\!\rho_t^{-1/N'}d\nu_t\gs \int_{X\times X}\!\Big[\sigma^{(1-t)}_{K/N'}\big(d(x_0,x_1)\big)\rho_0^{-1/N'}(x_0)+ \sigma^{(t)}_{K/N'}\big(d(x_0,x_1)\big)\rho_1^{-1/N'}(x_1)\Big]dq(x_0,x_1),
\end{split}
\end{equation*}
where the function
\begin{equation*}
\sigma^{(t)}_k(\theta):=
\begin{cases}
\frac{\sin(\sqrt k\cdot t\theta)}{\sin(\sqrt k\cdot \theta)},& \quad 0<k\theta^2<\pi^2,\\
t, &\quad  k\theta^2=0,\\
\frac{\sinh(\sqrt{-k}\cdot t\theta)}{\sinh(\sqrt{- k}\cdot \theta)},& \quad k\theta^2<0,\\
\infty,&\quad  k\theta^2\gs\pi^2.
\end{cases}
\end{equation*}
\end{defn}

Given a function $f\in C(X)$, the \emph{pointwise Lipschitz constant} (\cite{che99}) of $f$ at $x$ is defined by
\begin{equation*}
{\rm Lip}f(x):=\limsup_{y\to x}\frac{|f(y)-f(x)|}{d(x,y)}=\limsup_{r\to0}\sup_{d(x,y)\ls r}\frac{|f(y)-f(x)|}{r},
\end{equation*}
where we put ${\rm Lip}f(x)=0$ if $x$ is isolated. Clearly, ${\rm Lip}f$ is a $\mu$-measurable function on $X.$
The \emph{Cheeger energy}, denoted by ${\rm Ch}:\ L^2(X)\to[0,\infty]$, is defined  (\cite{ags14}) by
$${\rm Ch}(f):=\inf\Big\{\liminf_{j\to\infty}\frac 1 2\int_X({\rm Lip}f_j )^2d\mu\Big\},$$
where the infimum is taken over all sequences of Lipschitz functions $(f_j)_{j\in\mathbb N}$ converging to $f$ in $L^2(X).$ In general, ${\rm Ch}$ is a convex and lower semi-continuous functional on $L^2(X)$.
\begin{defn}
A metric measure space $(X,d,\mu)$ is called \emph{infinitesimally Hilbertian} if the associated Cheeger energy is quadratic. Moreover,  $(X,d,\mu)$ is said to satisfy \emph{Riemannian curvature-dimension condition} $RCD^*(K,N)$, for  $K\in\mathbb R$ and $N\in[1,\infty)$,  if it is infinitesimally Hilbertian and satisfies the $CD^*(K,N)$ condition.
\end{defn}

Let $(X,d,\mu)$ be a  metric measure space  satisfying   $RCD^*(K,N)$. For each $f\in D(\rm Ch)$, i.e., $f\in L^2(X)$ and ${\rm Ch}(f)<\infty$, it has
$${\rm Ch}(f)=\frac{1}{2}\int_X|\nabla f|^2d\mu,$$
where $|\nabla f|$ is the so-called minimal relaxed gradient of $f$ (see \S4 in \cite{ags14}). It was proved, according to  \cite[Lemma 4.3]{ags14} and Mazur's lemma, that Lipschitz functions are
 dense in $ D(\rm Ch)$, i.e., for each   $f\in D(\rm Ch)$, there exist  a sequence of Lipschitz functions $(f_j)_{j\in\mathbb N}$ such that
 $f_j\to f$ in $L^2(X)$ and $|\nabla (f_j- f)|\to 0$ in $L^2(X)$.
Since the Cheeger energy ${\rm Ch}$ is a quadratic form, the minimal relaxed gradients bring an inner product as following: given $f,g\in D(\rm Ch)$, it was proved \cite{gig15} that the limit
$$\ip{\nabla f}{\nabla g}:=\lim_{\epsilon\to0}\frac{|\nabla (f+\epsilon\cdot g)|^2-|\nabla f|^2}{2\epsilon}$$
exists in $L^1(X).$ The inner product is bi-linear and satisfies Cauchy-Schwarz inequality, Chain rule and Leibniz
rule (see Gigli \cite{gig15}).

\subsection{Canonical Dirichlet form and a global version of Bochner formula}$\ $

Given an infinitesimally Hilbertian metric measure space $(X,d,\mu)$, the energy $\mathscr E:=2{\rm Ch}$ gives a canonical Dirichlet form on $L^2(X)$ with the domain $\mathbb{V}:=D({\rm Ch})$.
Let $K\in\mathbb R$ and $N\in[1,\infty)$, and let $(X,d,\mu)$ be a  metric measure space  satisfying   $RCD^*(K,N)$. It has been shown \cite{ags15,ags-duke} that
 the canonical Dirichlet form $(\mathscr E,\mathbb{V})$  is strongly local and admits a Carr\'e du champ $\Gamma$ with $\Gamma(f)=   |\nabla f |^2$ of $f\in \mathbb{V}$. Namely, the energy measure of $f\in \mathbb{V}$ is absolutely continuous w.r.t. $\mu$ with the density $|\nabla f |^2$.
Moreover,  the intrinsic distance $d_{\mathscr E}$ induced by $(\mathscr E,\mathbb{V})$ coincides with the original distance $d$ on $X$.

It is worth noticing that if a metric measure space $(X,d,\mu)$ satisfying   $RCD^*(K,N)$ then its associated Dirichlet form $(\mathscr E,\mathbb{V})$ satisfies the standard assumptions: the local  volume doubling property and supporting a local  $L^2$-Poincare inequality (see \cite{stu06,raj12-cvpde}).

Let $\big(\Delta_{\mathscr E},D(\Delta_{\mathscr E})\big)$ and $(H_tf)_{t\gs0}$  denote the infinitesimal generator and the heat flow induced from $(\mathscr E,\mathbb{V})$. Let us recall the Bochner formula (also called the \emph{ Bakry-Emery condition}) in \cite{eks15} as following.
\begin{lem}\label{lem-boch}
Let $(X,d,\mu)$ be a  metric measure space  satisfying   $RCD^*(K,N)$ for $K\in\mathbb R$ and $N\in[1,\infty)$, and let  $(\mathscr E,\mathbb{V})$ be  the associated  canonical Dirichlet form. Then  the following properties hold.

$(i)$ $($\cite[Theorem 4.8]{eks15}$)$\indent If
$f\in D(\Delta_{\mathscr E})$ with $\Delta_{\mathscr E}f\in \mathbb{V}$ and if $\phi\in D(\Delta_{\mathscr E})\cap L^\infty(X)$ with $\phi\gs0$ and $\Delta_{\mathscr E}\phi\in L^\infty(X)$, then we have the Bochner formula:
\begin{equation}\label{bochner}
\begin{split}
\frac{1}{2}\int_X\Delta_{\mathscr E}\phi\cdot|\nabla f|^2d\mu\gs \frac{1}{N}\int_X\phi(\Delta_{\mathscr E} f)^2d\mu+\int_X\phi\ip{\nabla(\Delta_{\mathscr E}f)}{\nabla f}d\mu+K\int_X\phi|\nabla f|^2d\mu.
\end{split}
\end{equation}

$(ii)$ $($\cite[Theorem 5.5]{ams15}$)$\indent If
$f\in D(\Delta_{\mathscr E})$ with $\Delta_{\mathscr E}f\in L^4(X)\cap L^2(X)$ and if $\phi\in  \mathbb{V}$ with $\phi\gs0$, then we have
$|\nabla f|^2\in \mathbb{V}$ and the modified Bochner formula:
\begin{equation}\label{modi-bochner}
\begin{split}
 \int_X\Big(-\frac{1}{2} \ip{\nabla |\nabla f|^2}{\nabla \phi}+&\Delta_{\mathscr E}f\cdot \ip{\nabla f}{\nabla \phi}+\phi\cdot(\Delta_{\mathscr E} f)^2\Big)d\mu\\
  &\gs\int_X \Big(K |\nabla f|^2 + \frac{1}{N} (\Delta_{\mathscr E} f)^2\Big)\cdot\phi d\mu.\qquad
\end{split}
\end{equation}
\end{lem}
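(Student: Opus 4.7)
The proposal is to establish both inequalities through the $\Gamma_2$-calculus for the heat semigroup $(H_t)_{t\geq 0}$, leveraging the equivalence (due to Erbar-Kuwada-Sturm) between $RCD^*(K,N)$ and the dimensional Bakry-Emery condition $BE(K,N)$. The starting point is the observation that on an $RCD^*(K,N)$ space, the $W_2$-Wasserstein gradient flow of the (properly rescaled) Renyi entropy $\mathcal E_N(\rho\mu):=-\int\rho^{1-1/N}d\mu$ coincides with the $L^2$-gradient flow of the Cheeger energy, i.e., with the semigroup $(H_t)_{t\geq 0}$ generated by $\Delta_{\mathscr E}$. From the $CD^*(K,N)$ convexity of $\mathcal E_N$ along $W_2$-geodesics, combined with the EVI characterization of the heat flow on $RCD$ spaces, one extracts a dimensional gradient contraction estimate of Bakry-Ledoux type, schematically
\[
\Gamma(H_tf) + \frac{2(1-e^{-2Kt})}{KN}\,(\Delta_{\mathscr E}H_tf)^2 \;\leq\; e^{-2Kt}\,H_t\bigl(\Gamma(f)\bigr)
\]
valid $\mu$-a.e. for sufficiently regular $f$, with the obvious interpretation when $K=0$.

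For part (i), I would differentiate this contraction estimate at $t=0^+$ against the test function $\phi$. Concretely, set
\[
I(t):=\int_X \phi\cdot H_t\bigl(\Gamma(f)\bigr)d\mu\;=\;\int_X H_t\phi\cdot \Gamma(f)d\mu,
\]
whose derivative at $t=0$ equals $\int_X \Delta_{\mathscr E}\phi\cdot\Gamma(f)d\mu$ under the stated regularity on $\phi$. Expanding the contraction inequality, multiplying by $\phi\geq 0$, integrating, dividing by $t$ and sending $t\downarrow 0$, the first-order term produces $2\int_X\phi(\Delta_{\mathscr E}f)^2/N\,d\mu$, the $-2Kt$ factor produces $-2K\int_X\phi\,\Gamma(f)d\mu$, and a standard chain-rule manipulation (using $\Delta_{\mathscr E}f\in\mathbb{V}$) rewrites the remaining piece as $2\int_X\phi\,\langle\nabla f,\nabla\Delta_{\mathscr E}f\rangle d\mu$. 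The assumptions $\phi\in D(\Delta_{\mathscr E})\cap L^\infty$ and $\Delta_{\mathscr E}\phi\in L^\infty$ are precisely what is needed to justify interchanging the limit with the integral and to control $H_t\phi-\phi$ in the dual pairing.

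For part (ii), the goal is the self-improved form, which simultaneously weakens the test class from $D(\Delta_{\mathscr E})\cap L^\infty$ to $\mathbb{V}$ and upgrades the conclusion to $\Gamma(f)\in\mathbb{V}$. The key mechanism, following Savaré and Ambrosio-Mondino-Savaré, is the Cauchy-Schwarz-type bound $|\nabla\Gamma(f)|^2\leq 4\,\Gamma(f)\cdot\Gamma_2(f)$ that is formally a consequence of (i). Rigorously, one tests (i) with $\phi$ of the form $\psi^2/(\Gamma(f)+\varepsilon)^{1/2}$ or similar, to extract an a priori $L^1$-bound on $|\nabla\Gamma(f)|$ via Young's inequality; approximating $f$ by $H_sf$ (for which everything is smooth enough to apply (i)) and passing to the limit $s\downarrow 0$ then places $\Gamma(f)\in\mathbb{V}$. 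Once $\Gamma(f)\in\mathbb{V}$, integration by parts converts the left-hand side of the inequality in (i) into $-\tfrac12\int\langle\nabla\Gamma(f),\nabla\phi\rangle d\mu$, and a further integration by parts on the $\langle\nabla f,\nabla\Delta_{\mathscr E}f\rangle$ term (legal because $\Delta_{\mathscr E}f\in L^4\cap L^2$ and $\phi\in\mathbb{V}$) produces the modified Bochner inequality for the broader class of $\phi\in\mathbb{V}$.

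The main obstacle is this second step: the self-improvement and the relaxation of the regularity of $\phi$. The difficulty is that the natural test functions $\phi\psi$ that one would like to plug into (i) need not lie in $D(\Delta_{\mathscr E})\cap L^\infty$, so one must work with a semigroup-mollified version $\phi_s:=H_s\phi$ and carefully control the two error terms arising from $\Delta_{\mathscr E}\phi_s\not\to\Delta_{\mathscr E}\phi$ in strong topologies. The $L^4$ hypothesis on $\Delta_{\mathscr E}f$ is precisely what makes the product $\Delta_{\mathscr E}f\cdot\langle\nabla f,\nabla\phi\rangle$ integrable after the integration by parts, and keeping track of this integrability during the limit is the technical heart of the argument.
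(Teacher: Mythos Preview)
The paper does not prove this lemma at all: both parts are stated as citations to external results, namely \cite[Theorem 4.8]{eks15} for (i) and \cite[Theorem 5.5]{ams15} for (ii), and the paper simply invokes them as black boxes. There is therefore no ``paper's own proof'' to compare against.

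That said, your sketch is a reasonable summary of the strategies used in those cited sources. Part (i) is indeed obtained in \cite{eks15} by establishing the equivalence between $RCD^*(K,N)$ and the weak Bakry--\'Emery condition $BE(K,N)$, and your description of extracting the Bochner inequality by differentiating a dimensional gradient-contraction estimate at $t=0^+$ captures the mechanism. Part (ii) in \cite{ams15} does proceed by the self-improvement technique you describe (originating with Savar\'e): one first shows $|\nabla f|^2\in\mathbb V$ via an a priori estimate of the form $|\nabla\Gamma(f)|^2\leq 4\Gamma(f)\Gamma_2(f)$, then relaxes the test class by approximation. Your identification of the $L^4$ hypothesis on $\Delta_{\mathscr E}f$ as the integrability condition needed to handle the cross term $\Delta_{\mathscr E}f\cdot\langle\nabla f,\nabla\phi\rangle$ after integration by parts is also correct.

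One minor caution: in your outline for (i) you derive the Bochner inequality from the gradient contraction estimate, but in \cite{eks15} the logical flow is that $RCD^*(K,N)$ is shown to be equivalent to an entropic formulation, which in turn is equivalent to $BE(K,N)$; the weak Bochner inequality \emph{is} the $BE(K,N)$ condition rather than a consequence derived from contraction. The contraction estimate and the Bochner inequality are proved equivalent, so your direction is legitimate, but the presentation in \cite{eks15} is organized somewhat differently.
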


We need the following result on the existence of good cut-off functions on $RCD^*(K,N)$-spaces from \cite[Lemma 3.1]{mn14}; see also \cite{gmo15,ams15,hkx13}.
\begin{lem}\label{lem2.4}
Let $(X,d,\mu)$ be a  metric measure space  satisfying   $RCD^*(K,N)$ for $K\in\mathbb R$ and $N\in[1,\infty)$. Then for every $x_0\in X$ and $R>0$ there exists a Lipschitz cut-off function $\chi:  X\to [0,1]$ satisfying:\\
(i) $\ \chi=1$ on $B_{2R/3}(x_0)$ and ${\rm supp}(\chi)\subset B_{R}(x_0)$;\\
(ii) $\ \chi\in D(\Delta_{\E})$ and $\Delta_{\E}\chi\in \mathbb V\cap L^\infty(X)$, moreover $|\Delta_{\E}\chi|+ |\nabla \chi|\ls C(N,K,R)$.
\end{lem}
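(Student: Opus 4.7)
The plan is to produce $\chi$ by smoothing a preliminary Lipschitz cut-off via the heat semigroup and then composing with a smooth scalar truncation to re-impose the support condition. First I would fix intermediate radii $2R/3<r_1<r_2<R$ (say $r_1=7R/9$, $r_2=8R/9$) and a Lipschitz function $\psi: X\to[0,1]$ with $\psi\equiv 1$ on $B_{r_1}(x_0)$, $\psi\equiv 0$ outside $B_{r_2}(x_0)$, and ${\rm Lip}(\psi)\ls C/R$. I would then consider $u_t:=H_t\psi$ for $t=\delta R^2$ with $\delta=\delta(K,N)>0$ to be fixed small.

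The bound on $|\nabla u_t|$ is immediate from Bakry-\'Emery's $L^\infty$-gradient estimate, valid under $RCD^*(K,N)\subset RCD(K,\infty)$: $|\nabla u_t|^2\ls e^{-2Kt}H_t(|\nabla\psi|^2)\ls C(K,N,R)/R^2$. For the $L^\infty$-bound on $\Delta_{\E}u_t=\partial_tu_t$ I would invoke the analytic-smoothing property of the semigroup: interpolating the $L^1$-$L^\infty$ ultracontractive estimate supplied by the local volume doubling and $L^2$-Poincar\'e inequalities (both consequences of $RCD^*(K,N)$) with the $L^\infty$-contractivity of $H_s$ yields $\|\Delta_{\E}H_s\psi\|_{L^\infty}\ls C(K,N,R)/s$ for small $s$; taking $s=t$ gives the required bound. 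To further secure $\Delta_{\E}u_t\in \mathbb V$, I would apply the semigroup twice, i.e.\ replace $u_t$ by $H_{t/2}u_{t/2}$, exploiting that $H_{t/2}:\mathbb V\to\mathbb V$ is a contraction commuting with the generator.

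The remaining ingredient is support localization: the heat flow spreads mass outside $B_{r_2}$, so $u_t$ is not literally supported in $B_R$. For this I would use the Gaussian upper heat-kernel bound available on $RCD^*(K,N)$ spaces, which together with the doubling property gives, for $\delta$ small enough depending only on $K,N$, that $u_t(x)<1/4$ whenever $x\in X\setminus B_R(x_0)$ while $u_t(x)>3/4$ on $B_{2R/3}(x_0)$. Finally I would choose $\eta\in C^\infty(\R;[0,1])$ with $\eta\equiv 1$ on $[3/4,\infty)$ and $\eta\equiv 0$ on $(-\infty,1/4]$, and set $\chi:=\eta\circ u_t$. The chain-rule identities $\nabla\chi=\eta'(u_t)\nabla u_t$ and $\Delta_{\E}\chi=\eta'(u_t)\Delta_{\E}u_t+\eta''(u_t)|\nabla u_t|^2$ then yield the pointwise estimates as well as the membership $\chi\in D(\Delta_{\E})$, $\Delta_{\E}\chi\in \mathbb V\cap L^\infty(X)$; the composition kills $u_t$ on $\{u_t<1/4\}\supset X\setminus B_R(x_0)$ and leaves $\chi\equiv 1$ on $B_{2R/3}(x_0)$.

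The main obstacle is the combination of the $L^\infty$-bound on $\Delta_{\E}u_t$ with the additional requirement $\Delta_{\E}\chi\in\mathbb V$: the former truly uses ultracontractivity and hence the full Dirichlet-form machinery plus the Gaussian heat-kernel bounds supplied by the $RCD^*(K,N)$ hypothesis, while the latter is what forces the double-regularization trick. Controlling the term $\eta''(u_t)|\nabla u_t|^2$ in $\mathbb V$ via the chain and Leibniz rules of Gigli's calculus is the remaining point that has to be checked carefully.
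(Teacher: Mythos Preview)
The paper does not prove this lemma at all: it is stated as a quotation from \cite[Lemma~3.1]{mn14} (with parallel references to \cite{gmo15,ams15,hkx13}), so there is no ``paper's own proof'' to compare against. Your proposal is essentially the argument those references use---heat-flow mollification of a Lipschitz bump, followed by composition with a smooth scalar truncation---and the outline you give is correct.

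A couple of minor remarks. First, the double-regularization trick you invoke to force $\Delta_{\E}u_t\in\mathbb V$ is unnecessary: for any $f\in L^2(X)$ one has $H_tf\in D(\Delta_{\E})$ with $\Delta_{\E}H_tf=H_{t/2}\Delta_{\E}H_{t/2}f\in \mathbb V$ automatically, since $H_s$ maps $L^2$ into $\mathbb V$ for every $s>0$. Second, the point you flag as the ``remaining obstacle''---showing $\eta''(u_t)|\nabla u_t|^2\in\mathbb V$---is handled precisely by the fact that $|\nabla u_t|^2\in\mathbb V$, which is part of the Bakry--\'Emery machinery recorded in Lemma~\ref{lem-boch}(ii) of the present paper; together with $u_t\in\mathbb V\cap L^\infty$ and the Leibniz rule this gives the required membership. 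So your sketch closes without difficulty, and it matches the cited literature.
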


 \subsection{Sobolev spaces} $\  $

Several different notions of Sobolev spaces on metric measure space $(X,d,\mu)$  have been established in
 \cite{che99,shan00,hk00,haj03}. They are equivalent to each other on  $RCD^*(K,N)$ metric measure spaces (see, for example, \cite{ags13-lip}).

 Let $(X,d,\mu)$ be a metric measure space satisfying $RCD^*(K,N)$ for some $K\in\mathbb R$ and $N\in[1,\infty)$.
Fix  an open set $\Omega$ in $X$.  We denote by $Lip_{\rm loc}(\Omega)$ the set of locally Lipschitz continuous functions on $\Omega$, and by $Lip(\Omega)$ (resp. $Lip_0(\Omega)$) the set of Lipschitz continuous functions on $\Omega$ (resp, with compact support in $\Omega$).

 Let $\Omega\subset X$ be an open set. For any $1\ls p\ls +\infty$ and
   $f\in Lip_{\rm loc}(\Omega)$, its $W^{1,p}(\Omega)$-norm is defined by
$$\|f\|_{W^{1,p}(\Omega)}:=\|f\|_{L^{p}(\Omega)}+\|{\rm Lip}f\|_{L^{p}(\Omega)}.$$
 The Sobolev spaces $W^{1,p}(\Omega)$ is defined by the closure of the set
$$\big\{f\in Lip_{\rm loc}(\Omega)|\ \|f\|_{W^{1,p}(\Omega)}<+\infty\big\}$$
under  the $W^{1,p}(\Omega)$-norm. Remark that $W^{1,p}(\Omega)$ is reflexive for any $1<p<\infty $ (see \cite[Theorem 4.48]{che99}).
Spaces $W_0^{1,p}(\Omega)$ is defined by the closure of $Lip_0(\Omega)$ under  the $W^{1,p}(\Omega)$-norm.
We say a function $f\in W^{1,p}_{\rm loc}(\Omega)$ if $f\in W^{1,p}(\Omega')$ for every open subset $\Omega'\subset\subset\Omega.$

The following  two facts are  well-known for experts. For the convenience of readers, we include a proof here.
\begin{lem}
(i) For any $1<p<\infty $, we have  $W^{1,p}(X)=W^{1,p}_0(X)$.\\
 (ii) $W^{1,2}(X)=D({\rm Ch})$.
\end{lem}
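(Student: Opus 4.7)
The plan is to handle (i) first via a truncation-by-cut-off argument that works for any $1<p<\infty$, then use it to simplify (ii), where the key input is the density result from \cite{ags14} (recalled in \S2.1) together with the identification ${\rm Lip}\,g=|\nabla g|$ $\mu$-a.e.\ for Lipschitz $g$ on $RCD^*$ spaces.

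For (i), I would fix $x_0\in X$ and set $\chi_R(x):=\max\{0,\min\{1,2-d(x,x_0)/R\}\}$, a $1$-Lipschitz cut-off equal to $1$ on $B_R(x_0)$, vanishing outside $B_{2R}(x_0)$, and with compact support by properness of $X$. Given $f\in W^{1,p}(X)$ and a defining sequence $(f_k)\subset Lip_{\rm loc}(X)$ with $f_k\to f$ in $W^{1,p}(X)$, each $f_k\chi_R$ is Lipschitz with compact support and hence lies in $Lip_0(X)\subset W^{1,p}_0(X)$. The Leibniz-type bound
\[
{\rm Lip}(f_k-f_k\chi_R)\ls (1-\chi_R)\cdot{\rm Lip}f_k+\tfrac{1}{R}|f_k|\cdot\mathbf{1}_{B_{2R}\setminus B_R}
\]
combined with dominated convergence (using $f_k,{\rm Lip}f_k\in L^p(X)$ and $\|f_k\|_{L^p(X\setminus B_R)}\to 0$) shows $f_k\chi_R\to f_k$ in $W^{1,p}(X)$ as $R\to\infty$, and a diagonal extraction in $(k,R)$ exhibits $f$ as a $W^{1,p}$-limit of elements of $Lip_0(X)$, giving $f\in W^{1,p}_0(X)$.

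For (ii), the inclusion $W^{1,2}(X)\subseteq D({\rm Ch})$ is essentially tautological from the definition of ${\rm Ch}$ as the $L^2$-relaxation: using part (i), any $f\in W^{1,2}(X)$ admits \emph{Lipschitz} approximants $f_n\to f$ in $W^{1,2}(X)$, so ${\rm Ch}(f)\ls\liminf_n\tfrac{1}{2}\|{\rm Lip}f_n\|_{L^2}^2<\infty$. For the reverse inclusion I would invoke the density result from \cite[Lemma 4.3]{ags14} recalled in \S2.1 to produce Lipschitz $f_n\to f$ in $L^2$ with $|\nabla(f_n-f)|\to 0$ in $L^2$, then apply the identity ${\rm Lip}g=|\nabla g|$ $\mu$-a.e.\ for Lipschitz $g$ to rewrite $\|{\rm Lip}(f_n-f_m)\|_{L^2}=\||\nabla(f_n-f_m)|\|_{L^2}$, which is Cauchy; hence $(f_n)$ is Cauchy in $W^{1,2}(X)$ with $L^2$-limit $f$, so $f\in W^{1,2}(X)$. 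The main obstacle is precisely this last identity: the inequality $|\nabla g|\ls {\rm Lip}g$ is immediate from the relaxation definition, but the reverse is Cheeger's theorem for doubling PI spaces and underlies the Sobolev-space equivalence on $RCD^*$ spaces referenced at the start of \S2.3. Everything else --- cut-offs, dominated convergence, diagonal extraction --- is soft.
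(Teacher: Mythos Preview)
Your proof is correct, and the overall strategy matches the paper's, but the technical execution in part (i) differs in a useful way. The paper truncates by the same kind of cut-off but then argues indirectly: it bounds $|\nabla(f\chi_j)|\ls|\nabla f|+|f|$ uniformly in $j$, invokes reflexivity of $W^{1,p}(X)$ (citing \cite[Theorem~4.48]{che99}) to get weak convergence, and finally applies Mazur's lemma to upgrade to strong convergence. Your argument bypasses reflexivity and Mazur entirely: you bound the pointwise Lipschitz constant of the \emph{difference} $f_k(1-\chi_R)$ and apply dominated convergence directly to get strong convergence in one step. This is more elementary and works equally well. (One cosmetic slip: your $\chi_R$ is $1/R$-Lipschitz, not $1$-Lipschitz; you use the correct constant in the bound, so nothing is affected.)

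For part (ii) the two arguments are essentially the same --- both directions rest on the density statement from \cite[Lemma~4.3]{ags14} recalled in \S2.1 --- but you are more explicit than the paper about the crucial identification ${\rm Lip}\,g=|\nabla g|$ $\mu$-a.e.\ for Lipschitz $g$, which is indeed what is needed to translate Cauchy-ness in the $|\nabla\cdot|$-seminorm into Cauchy-ness in the $W^{1,2}$-norm (defined via ${\rm Lip}$). The paper's two-sentence proof of $D({\rm Ch})\subset W^{1,2}(X)$ leaves this step implicit, presumably absorbed into the equivalence-of-Sobolev-spaces remark at the start of \S2.3.
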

\begin{proof} If  $X$ is compact, the assertion (i) is clear. Without loss of generality, we can assume that $X$ is non-compact.
Given a function $f\in Lip(X)\cap W^{1,p}(X)$, in order to prove (i), it suffices to find a sequence $(f_j)_{j\in\mathbb N}$ of Lipschitz functions with compact supports in $X$ such that $f_j\to f$ in $W^{1,p}(X).$

  Consider a family of Lipschitz cut-off $\chi_j$ with, for each $j\in\mathbb N$, $\chi_j(x)=1$ for $x\in B_j(x_0)$ and $\chi_j(x)=0$ for $x\not\in B_{j+1}(x_0)$, and $ 0\ls\chi_j(x)\ls1, |\nabla\chi_j| (x)\ls1$ for all $x\in X$. Now  $f\cdot\chi_j\in Lip_0(X)$ and $f\cdot\chi_j(x)\to f(x)$ for $\mu$-almost all $x\in X$.
Notice that $|f\cdot\chi_j|\ls |f|\in L^p(X)$ for all $j$, the dominated convergence theorem implies that  $f\cdot\chi_j\to f $ in  $L^p(X)$ as $j\to\infty$. On the other hand,  since
$$|\nabla (f\cdot\chi_j)|\ls |\nabla f|\cdot \chi_j+|f|\cdot |\nabla \chi_j| \ls |\nabla f| +|f| \in L^p(X)$$
for all $j\in \mathbb N$, we obtain that the sequence $(f\cdot\chi_j)_{j\in\mathbb N}$ is bounded in $W^{1,p}(X).$ By noticing that  $W^{1,p}(X)$ is reflexive (see \cite[Theorem 4.48]{che99}), we can see  that $f\cdot\chi_j$ converges weakly  to $f$ in  $W^{1,p}(X)$ as $j\to\infty.$ Hence,
by Mazur¡¯s lemma, we conclude that there exists a convex combination of  $f\cdot\chi_j$ converges strongly to  $f$ in  $W^{1,p}(X)$ as $j\to\infty.$ The proof of (i) is completed.

Let us prove (ii). It is obvious that $W^{1,2}(X)\subset D({\rm Ch})$, since $Lip(X)\cap W^{1,2}(X)\subset D({\rm Ch})$ and $|\nabla f_n|\leq Lip(f_n)$. We need only to show $D({\rm Ch})\subset W^{1,2}(X)$. This follows immediately from the fact that Lipschitz functions are
 dense in $ D(\rm Ch)$. The proof of (ii) is completed.
 \end{proof}

\section{The weak Laplacian and a local version of Bochner formula}

 Let $(X,d,\mu)$ be a metric measure space satisfying $RCD^*(K,N)$ for some $K\in\mathbb R$ and $N\in[1,\infty)$.
Fix any open set $\Omega\subset X$.
We will denote by the Sobolev spaces $H^1_0(\Omega):=W^{1,2}_0(\Omega)$, $H^1(\Omega):=W^{1,2}(\Omega) $ and $H^1_{\rm loc}(\Omega):=W_{\rm loc}^{1,2}(\Omega).$

\begin{defn}[Weak Laplacian]\label{laplace}
Let $\Omega\subset X$ be an open set, the \emph{Laplacian} on $\Omega$ is an operator $\mathscr L$ on $ H^1(\Omega)$ defined as the follows. For each  function $f\in H^1(\Omega)$, its Lapacian $\mathscr Lf$ is a functional acting on  $H^1_0(\Omega)\cap L^\infty(\Omega)$ given by
$$\mathscr Lf(\phi):=-\int_\Omega\ip{\nabla f}{\nabla \phi}d\mu\qquad \forall\ \phi\in H^1_0(\Omega)\cap L^\infty(\Omega).$$
For any $g\in H^{1}(\Omega) \cap L^{\infty}(\Omega)$, the distribution $g\cdot\mathscr Lf$ is  a functional acting on  $H^1_0(\Omega)\cap L^\infty(\Omega)$ defined by
\begin{equation}\label{eq3.1}
g\cdot\mathscr Lf(\phi):= \mathscr Lf(g\phi)\qquad \forall\ \phi\in H^1_0(\Omega)\cap L^\infty(\Omega).
\end{equation}
\end{defn}
 This Laplacian (on $\Omega$) is linear due to that  the inner product  $\ip{\nabla f}{\nabla g}$ is linear.
The strongly local property of the inner product $\int_X\ip{\nabla f}{\nabla g}d\mu$ implies that if  $f\in H^1(X)$ and $f=  constant$ on $\Omega$  then $\mathscr Lf(\phi)=0 $ for any $\phi\in H^1_0(\Omega)\cap L^\infty(\Omega).$

If, given  $f\in H^{1}(\Omega)$, there exists a function $u_f\in L^1_{\rm loc}(\Omega)$ such that
\begin{equation}\label{eq3.2}
\mathscr Lf(\phi)=\int_\Omega u_f\cdot\phi d\mu\qquad \forall\ \phi\in H^1_0(\Omega)\cap L^\infty(\Omega),
\end{equation}
then we say that  ``$\mathscr Lf$  is a function in  $ L^1_{\rm loc}(\Omega)$" and   write as ``$\La f=u_f$ in the sense of distributions". It is similar to say that  ``$\mathscr Lf$  is  a function in  $ L^p_{\rm loc}(\Omega)$ or $W^{1,p}_{\rm loc}(\Omega)$ for any $p\in[1,\infty]$", and so on.

The operator $\mathscr L$ satisfies the following Chain rule and Leibniz rule, which is essentially due to Gigli \cite{gig15}.
\begin{lem}\label{lem3.2}
 Let $\Omega$ be an  open domain of a metric measure space $(X,d,\mu)$ satisfying $RCD^*(K,N)$ for some $K\in \mathbb R$ and $N\in[1,\infty)$.\\
{\rm(i)\ (Chain rule)}\ \ \  Let $f\in  H^{1}(\Omega)\cap L^\infty(\Omega)$ and $\eta\in C^2(\mathbb R)$. Then we have
\begin{equation}\label{chain}
\La[\eta(f)]=\eta'(f)\cdot\La f+\eta''(f)\cdot|\nabla f|^2.
\end{equation}
{\rm(ii)\ (Leibniz rule)}\ \ \ Let $f,g\in  H^{1}(\Omega)\cap L^\infty(\Omega)$. Then we have
\begin{equation}\label{leibniz}
\La(f\cdot g)=f\cdot\La g+g\cdot\La f+2\ip{\nabla f}{\nabla g}.
\end{equation}
\end{lem}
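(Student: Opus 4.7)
The plan is to unwind both identities back to the definition of $\mathscr L$ given in Definition 3.1, and then reduce them to the corresponding chain/Leibniz rules for the pointwise inner product $\ip{\nabla\cdot}{\nabla\cdot}$, which are already available in Gigli's framework for infinitesimally Hilbertian spaces. Both identities are to be verified as equalities of distributions, so I would fix an arbitrary test function $\phi\in H^1_0(\Omega)\cap L^\infty(\Omega)$ and check that the two sides agree when paired against $\phi$.

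For the Leibniz rule, I first use the Leibniz rule for $\ip{\nabla\cdot}{\nabla\cdot}$ at the gradient level to write
\[
\mathscr L(f g)(\phi)=-\int_\Omega \ip{\nabla(fg)}{\nabla\phi}\,d\mu=-\int_\Omega\bigl(f\ip{\nabla g}{\nabla\phi}+g\ip{\nabla f}{\nabla\phi}\bigr)\,d\mu.
\]
On the other hand, by (3.1) and the definition of $\mathscr L$,
\[
f\cdot\mathscr Lg(\phi)+g\cdot\mathscr Lf(\phi)=\mathscr Lg(f\phi)+\mathscr Lf(g\phi)=-\int_\Omega\ip{\nabla g}{\nabla(f\phi)}\,d\mu-\int_\Omega\ip{\nabla f}{\nabla(g\phi)}\,d\mu,
\]
and expanding $\nabla(f\phi)=f\nabla\phi+\phi\nabla f$ and $\nabla(g\phi)=g\nabla\phi+\phi\nabla g$ via the gradient Leibniz rule produces exactly the left-hand side minus $2\int_\Omega\phi\ip{\nabla f}{\nabla g}\,d\mu$, which gives (\ref{leibniz}). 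The chain rule is then proved by the same mechanism: $\ip{\nabla\eta(f)}{\nabla\phi}=\eta'(f)\ip{\nabla f}{\nabla\phi}$ is rewritten as $\ip{\nabla f}{\nabla(\eta'(f)\phi)}-\phi\,\eta''(f)|\nabla f|^2$ using Leibniz again (with $g=\eta'(f)$), and then (3.1) collapses the first piece to $\eta'(f)\cdot\mathscr Lf(\phi)$, leaving (\ref{chain}).

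The main obstacle, and the only point requiring genuine care, is to justify that the products $f\phi$, $g\phi$ and $\eta'(f)\phi$ are themselves admissible test functions, i.e.\ that they belong to $H^1_0(\Omega)\cap L^\infty(\Omega)$ so that (3.1) actually applies. Boundedness is immediate from the assumptions $f,g\in L^\infty(\Omega)$ and $\phi\in L^\infty(\Omega)$, and for $\eta'(f)$ I would use that $f$ is bounded and $\eta\in C^2(\mathbb R)$ so that $\eta'(f)$ is bounded and lies in $H^1(\Omega)$ via Gigli's gradient chain rule. Membership in $H^1_0(\Omega)$ is the technical checkpoint: I would approximate $\phi$ by a sequence $\phi_n\in Lip_0(\Omega)$ converging in $H^1(\Omega)$ and, after a truncation ensuring a uniform $L^\infty$-bound, approximate the bounded $H^1$ factor by Lipschitz functions, using the Leibniz rule to show that the products converge in $H^1(\Omega)$; the supports remain inside $\mathrm{supp}(\phi_n)$, so the limit lies in $H^1_0(\Omega)$.

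Once these admissibility checks are in place, the two identities reduce to purely algebraic manipulations of the already-established gradient rules, so no deeper analytic input (e.g.\ from the $RCD^*(K,N)$ assumption beyond infinitesimal Hilbertianity) is needed in the argument itself; the $RCD^*$ hypothesis only enters to guarantee that $\ip{\nabla\cdot}{\nabla\cdot}$ has the required bilinear and calculus properties from Gigli \cite{gig15}.
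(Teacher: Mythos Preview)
Your proposal is correct and follows essentially the same route as the paper: both sides are paired against a test function $\phi\in H^1_0(\Omega)\cap L^\infty(\Omega)$, reduced via Gigli's chain and Leibniz rules for $\ip{\nabla\cdot}{\nabla\cdot}$, and the key point is that products such as $\eta'(f)\phi$ (resp.\ $f\phi$, $g\phi$) lie in $H^1_0(\Omega)\cap L^\infty(\Omega)$. The paper only writes out the chain rule and simply asserts this admissibility, whereas you also spell out the Leibniz computation and sketch the approximation argument for $H^1_0$-membership; otherwise the arguments coincide.
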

\begin{proof}The proof is given essentially in \cite{gig15}. For the completeness, we sketch it. We prove only the Chain rule (\ref{chain}). The proof  of   Leibniz rule (\ref{leibniz}) is similar.

Given any $\phi\in H^1_0(\Omega)\cap L^\infty(\Omega)$, we have
$$\La[\eta(f)](\phi)=-\int_\Omega\ip{\nabla[\eta(f)]}{\nabla \phi}d\mu=-\int_\Omega\eta'(f)\cdot\ip{\nabla f}{\nabla \phi}d\mu,$$
where we have used that  (see \cite[\S 3.3]{gig15}) the inner product $\ip{\nabla f}{\nabla \phi}$  satisfies the Chain rule, i.e., $\ip{\nabla[\eta(f)]}{\nabla \phi}=\eta'(f)\cdot\ip{\nabla f}{\nabla \phi}$.

On the other hand, by (\ref{eq3.1}), we obtain
\begin{equation*}
\begin{split}
\Big[\eta'(f)\cdot\La f+\eta''(f)\cdot|\nabla f|^2\Big](\phi)&=\La f\big(\eta'(f)\cdot \phi\big)+\int_\Omega\eta''(f)\cdot|\nabla f|^2\cdot  \phi d\mu\\
&=-\int_\Omega\ip{\nabla f}{ \nabla \big(\eta'(f)\cdot \phi\big)}d\mu  +\int_\Omega\eta''(f)\cdot|\nabla f|^2\cdot  \phi d\mu\\
&=-\int_\Omega\ip{\nabla f}{ \nabla \phi}\cdot\eta'(f)d\mu,
\end{split}
\end{equation*}
where we have used that  $\eta'(f)\cdot\phi\in H^1_0(\Omega)\cap L^\infty(\Omega)$ and that (see \cite[\S 3.3]{gig15}) the inner product $\ip{\nabla f}{\nabla g}$  satisfies  the Chain rule and Leibniz rule, i.e.,
\begin{equation*}\begin{split}
\ip{\nabla f}{ \nabla \big(\eta'(f)\cdot \phi\big)}&=\ip{\nabla f}{ \nabla  \phi }\eta'(f)+\ip{\nabla f}{ \nabla \big(\eta'(f)\big)} \phi\\
&=\ip{\nabla f}{ \nabla  \phi }\eta'(f)+\ip{\nabla f}{ \nabla f}\cdot\eta''(f) \phi.
\end{split}
\end{equation*}
The combination of the above two equations implies the Chain rule  (\ref{chain}). The proof is completed.
\end{proof}

To compare   the above  Laplace operator  $\La $ on $X$ with the generator $\Delta_{\E}$ of the canonical Dirichlet form $(\mathscr E,\mathbb V)$, it was shown \cite{gig15} that the following compatibility result holds.
\begin{lem}[Proposition 4.24 in \cite{gig15}]\label{lem3.3}
The following two statements are equivalent:\\
\indent $i)\quad $  $f\in H^1(X)$ and $\La f$ is a function in $L^2(X)$,\\
\indent $ii)\quad $  $f\in D(\Delta_{\E})$.\\
In each of these cases, we have $\La f=\Delta_{\E}f.$
\end{lem}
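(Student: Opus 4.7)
The plan is to verify the two implications separately, using as the main bridge the fact that $\mathscr{E}(f,g) = \int_X \langle \nabla f, \nabla g\rangle d\mu$ on $\mathbb{V} = D(\mathrm{Ch}) = H^1(X)$ (by Lemma 2.5(ii) and $\mathscr{E} = 2\mathrm{Ch}$), and that the defining identity for $\Delta_\mathscr{E}$ asks for an $L^2$ representative of $-\mathscr{E}(f,\cdot)$ against all $g \in \mathbb{V}$, whereas $\mathscr{L}f$ only tests against $H^1_0(\Omega) \cap L^\infty(\Omega)$.

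For the implication (ii) $\Rightarrow$ (i) the argument is essentially a restriction. If $f \in D(\Delta_\mathscr{E})$, then $f \in \mathbb{V} = H^1(X)$, and there exists $h := \Delta_\mathscr{E} f \in L^2(X)$ with $-\mathscr{E}(f,g) = \int_X h \cdot g\, d\mu$ for every $g \in \mathbb{V}$. Since $H^1_0(X) \cap L^\infty(X) \subset H^1(X) = \mathbb{V}$, this identity holds in particular for every admissible test $\phi$, which is exactly $\mathscr{L}f(\phi) = \int_X h \cdot \phi\, d\mu$; hence $\mathscr{L}f$ is represented by the $L^2$-function $h = \Delta_\mathscr{E} f$.

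For the harder direction (i) $\Rightarrow$ (ii), suppose $f \in H^1(X)$ and there exists $u_f \in L^2(X)$ with $\mathscr{L}f(\phi) = \int_X u_f \phi\, d\mu$ for every $\phi \in H^1_0(X) \cap L^\infty(X)$. I need to upgrade this identity to all $g \in \mathbb{V} = H^1(X)$. The idea is to approximate a general $g$ by bounded, compactly supported elements. By Lemma 2.5(i), $H^1(X) = H^1_0(X)$, so one may pick Lipschitz functions $g_n \in \mathrm{Lip}_0(X)$ with $g_n \to g$ in $H^1(X)$; since each $g_n$ has compact support, $g_n \in H^1_0(X) \cap L^\infty(X)$ is an admissible test function. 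Then $\mathscr{L}f(g_n) = \int_X u_f g_n\, d\mu$, and passing to the limit one sees that the left-hand side tends to $-\int_X \langle \nabla f, \nabla g\rangle d\mu$ by $L^2$-convergence of $\nabla g_n \to \nabla g$ and Cauchy--Schwarz, while the right-hand side tends to $\int_X u_f g\, d\mu$ by $L^2$-convergence of $g_n \to g$ and $u_f \in L^2(X)$. The resulting identity $-\mathscr{E}(f,g) = \int_X u_f g\, d\mu$ for all $g \in \mathbb{V}$ is exactly the statement that $f \in D(\Delta_\mathscr{E})$ with $\Delta_\mathscr{E} f = u_f$.

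The main obstacle is purely the density/approximation step in the second direction: one must know not merely that $\mathrm{Lip}(X) \cap W^{1,2}(X)$ is dense in $H^1(X)$ (which is the standard density of Lipschitz functions in $D(\mathrm{Ch})$), but that one can further cut off to obtain approximants in $H^1_0(X) \cap L^\infty(X)$. This is exactly what Lemma 2.5(i) provides, via the cut-off sequence $\chi_j$ constructed in its proof; without the identification $H^1(X) = H^1_0(X)$ on the non-compact space $X$ the argument would stall. Everything else is soft functional analysis.
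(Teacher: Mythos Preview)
The paper does not supply its own proof of this lemma; it is quoted verbatim from Gigli \cite[Proposition 4.24]{gig15} and immediately followed by the next statement. Your argument is correct and self-contained: the implication (ii) $\Rightarrow$ (i) is a restriction of the defining identity for $\Delta_{\mathscr E}$ to the smaller test class $H^1_0(X)\cap L^\infty(X)$, and for (i) $\Rightarrow$ (ii) the key density input $H^1(X)=H^1_0(X)$ from Lemma~2.5(i) lets you approximate an arbitrary $g\in\mathbb V$ by compactly supported Lipschitz functions $g_n\in H^1_0(X)\cap L^\infty(X)$ and pass to the limit on both sides using $|\nabla f|\in L^2(X)$ and $u_f\in L^2(X)$.
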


The following regularity result for the Poisson equation has been proved under a Bakry-Emery type heat semigroup curvature condition, which is implied by the Riemannian curvature-dimension condition $RCD^*(K,N)$ (see \cite[Theorem 7]{eks15} and \cite[Theorem 7.5]{ams15}).
\begin{lem}[\cite{jia14,jky14}]\label{lip}
Let $(X,d,\mu)$  be a  metric measure space  satisfying   $RCD^*(K,N)$ for $K\in\mathbb R$ and $N\in[1,\infty)$. Let $g\in L^\infty(B_R)$, where $B_R$ is a geodesic ball with radius $R$ and centered at a fixed point $x_0$.  Assume $f\in H^1(B_R)$ and $\La f=g$ on $B_R$ in the sense of distributions. Then we have
$|\nabla f|\in L^\infty_{\rm loc}(B_R),$ and
$$\||\nabla f|\|_{ L^\infty(B_{R/2})}\ls C(N,K,R)\cdot \Big(\frac{1}{\mu(B_R)}\|f\|_{ L^1(B_{R})}+\|g\|_{ L^\infty(B_{R})}\Big).$$
\end{lem}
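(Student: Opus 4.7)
The plan is to combine the local Bochner formula of Theorem~\ref{thm-bochner} with Moser iteration, which is available on $RCD^*(K,N)$ spaces thanks to the local volume doubling property and the local $L^2$-Poincar\'e inequality recalled in \S2.2. The argument has two stages: first an $L^\infty$ bound on $f$, then an $L^\infty$ bound on $|\nabla f|$, linked by a Caccioppoli-type estimate.

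\emph{Stage 1 ($L^\infty$ bound on $f$).} Since $\La f=g$ weakly with $g\in L^\infty(B_R)$, classical De Giorgi--Moser iteration for weak sub/supersolutions of the Poisson equation on doubling--Poincar\'e metric measure spaces yields
\[
\|f\|_{L^\infty(B_{3R/4})}\ls C(N,K,R)\Bigl(\tfrac{1}{\mu(B_R)}\|f\|_{L^1(B_R)}+\|g\|_{L^\infty(B_R)}\Bigr).
\]

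\emph{Stage 2 ($L^\infty$ bound on $|\nabla f|$).} Were $g\in H^1(B_R)\cap L^\infty(B_R)$, Theorem~\ref{thm-bochner} would give $h:=|\nabla f|^2\in H^1(B_{R/2})\cap L^\infty(B_{R/2})$ together with
\[
\tfrac{1}{2}\La h\gs Kh+\tfrac{1}{N}g^2+\ip{\nabla f}{\nabla g}\qquad\text{on }B_{R/2}.
\]
The term $\ip{\nabla f}{\nabla g}$ is the only obstacle to a direct Moser iteration when one only assumes $g\in L^\infty$. It can be rewritten on any non-negative test $\phi\in H^1_0(B_{R/2})\cap L^\infty$ by integrating by parts and using $\La f=g$:
\[
\int\phi\ip{\nabla f}{\nabla g}\,d\mu=-\int g\ip{\nabla f}{\nabla\phi}\,d\mu-\int\phi g^2\,d\mu,
\]
which requires only $g\in L^\infty$. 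Substituting this back and absorbing the term $g\ip{\nabla f}{\nabla\phi}$ by Young's inequality produces a weak subsolution inequality of the form $\La h\gs -ah-b$ with $a=C(|K|+1)$ and $b=C\|g\|_{L^\infty(B_R)}^2$. Moser iteration then yields
\[
\|h\|_{L^\infty(B_{R/2})}\ls C(N,K,R)\Bigl(\tfrac{1}{\mu(B_{2R/3})}\|h\|_{L^1(B_{2R/3})}+\|g\|_{L^\infty(B_R)}^2\Bigr),
\]
and a Caccioppoli estimate, obtained by testing $\La f=g$ against $\chi^2 f$ with a good cut-off $\chi$ from Lemma~\ref{lem2.4}, bounds $\|h\|_{L^1(B_{2R/3})}$ in terms of $\|f\|_{L^\infty(B_{3R/4})}^2$ and $\|g\|_{L^\infty(B_R)}^2$. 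Combining with Stage~1 produces the claimed estimate.

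\emph{Main obstacle.} The delicate point is to legitimize Theorem~\ref{thm-bochner} and the integration-by-parts step when $g$ is only in $L^\infty$. A clean workaround is to regularize via the heat flow: set $g_\epsilon:=H_\epsilon g$, which is Lipschitz (hence in $H^1\cap L^\infty$), solve $\La f_\epsilon=g_\epsilon$ on $B_R$ with $f_\epsilon-f\in H^1_0(B_R)$, run the argument above on $f_\epsilon$ with constants uniform in $\epsilon$, and pass to the limit using that $f_\epsilon\to f$ strongly in $H^1(B_R)$ (by Lax--Milgram together with $g_\epsilon\to g$ in $L^2(B_R)$). Checking that the good cut-off functions of Lemma~\ref{lem2.4} support the quantitative Moser iteration with constants depending only on $N,K,R$ is the principal technical burden.
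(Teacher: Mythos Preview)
Your proposal has a logical circularity within the paper's own dependency structure. The paper does not give a self-contained proof of Lemma~\ref{lip}; it simply cites \cite{jia14,jky14} and records that the constants depend only on $N,K,R$ via the doubling and Poincar\'e constants. The actual proofs in those references proceed through heat-semigroup gradient estimates (the Bakry--\'Emery $|\nabla H_t f|\ls e^{-Kt}H_t|\nabla f|$ type bounds and heat-kernel representations), not through the local Bochner formula.

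The circularity in your approach is this: you invoke Theorem~\ref{thm-bochner} to control $|\nabla f|^2$. But in the paper, Theorem~\ref{thm-bochner} is Theorem~\ref{theorem-boch}, and the very first line of its proof reads ``From Lemma~\ref{lip} and that $g\in L^\infty(B_R)$, we know $|\nabla f|\in L^\infty_{\rm loc}(B_R)$.'' That $L^\infty$ bound on $|\nabla f|$ is what allows one to show $\La\widetilde f\in L^\infty(X)$ for the cut-off function $\widetilde f=f\chi$, which in turn is needed to place $\widetilde f$ in the domain where Lemma~\ref{lem-boch}(ii) applies. So the local Bochner formula, as established in the paper, already sits downstream of Lemma~\ref{lip}; you cannot use it to prove Lemma~\ref{lip}. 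Your regularization $g_\epsilon=H_\epsilon g$, $\La f_\epsilon=g_\epsilon$, does not escape this: to apply Theorem~\ref{thm-bochner} to $f_\epsilon$ you still need the localization argument of Theorem~\ref{theorem-boch}, which needs Lemma~\ref{lip}.

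If you want a genuinely independent route, you would have to either (a) reproduce the semigroup argument of \cite{jia14,jky14}, or (b) establish a version of the local Bochner inequality that does not presuppose $|\nabla f|\in L^\infty_{\rm loc}$---for instance by working directly with Lemma~\ref{lem-boch}(ii), which only requires $\Delta_{\mathscr E}\widetilde f\in L^4\cap L^2$, and carrying the Moser iteration at the $L^p$ level throughout. Option (b) is plausible but is not what the paper does, and it is substantially more work than your sketch suggests.
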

\begin{proof} In the case of $g=0$, i.e., $f$ is harmonic on $B_R$, the assertion is proved in \cite[Theorem 1.2]{jia14} (see also \cite[Theorem 3.9]{gmo15}). In the general case $g\in L^\infty(\Omega)$, this is proved in \cite[Theorem 3.1]{jky14}. The assertion of the constant $C(N,K,R)$ depending only on $N,K,R$ comes from the fact that both the doubling constant and $L^2$-Poincare constant on a ball $B_R$ of a $RCD^*(K,N)$-space depend on $N,K$ and $R$.
\end{proof}

Now we will give a local version of the Bochner formula, Theorem \ref{thm-bochner}, by combining the modified Bochner formula (\ref{modi-bochner}) and a similar argument in \cite{hkx13,jia15}.
 \begin{thm}[\cite{ams15,hkx13}]\label{theorem-boch}
  Let $(X,d,\mu)$  be a  metric measure space  satisfying   $RCD^*(K,N)$ for $K\in\mathbb R$ and $N\in[1,\infty)$. Let $B_R$ be a geodesic ball with radius $R$ and centered at a  point $x_0$.

  Assume that $f\in H^{1}(B_R)$ satisfies
$\mathscr Lf=g $ on $B_R$ in the sense of distributions with the function $g\in H^1(B_R)\cap L^\infty(B_R)$.
Then we have $|\nabla f|^2\in H^{1}(B_{R/2}) \cap L^\infty(B_{R/2})$  and
\begin{equation}\label{eq3.5}
 \frac{1}{2}\La(|\nabla  f |^2 ) \gs  \Big[\frac{g^2}{N}+\ip{\nabla f}{\nabla  g}+K|\nabla f|^2\Big]\cdot\mu \quad {\rm on}\ \   B_{R/2}
 \end{equation}
in the sense of distributions, i.e.,
\begin{equation*}
-\frac{1}{2}\int_{B_{R/2}}\ip{\nabla|\nabla  f |^2}{\nabla \phi}d\mu \gs  \int_{B_{R/2}}\phi \cdot\Big( \frac{g^2}{N}+\ip{\nabla f}{\nabla  g}+K|\nabla f|^2\Big) d\mu
\end{equation*}
for any $0\ls \phi\in  H^1_0(B_{R/2})\cap L^\infty(B_{R/2})$.
\end{thm}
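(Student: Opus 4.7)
My plan is to reduce to the global modified Bochner formula \eqref{modi-bochner} of Lemma \ref{lem-boch}(ii). The only natural way to produce a globally defined function out of $f$ with a controlled Laplacian is to multiply $f$ by a good cut-off. After applying \eqref{modi-bochner} to this localization and restricting test functions to $B_{R/2}$, an integration by parts that uses $\La f=g$ together with $g\in H^1$ converts the $\Delta_{\E}\tilde f\cdot\ip{\nabla\tilde f}{\nabla\phi}$ term into the desired $\ip{\nabla f}{\nabla g}$ term in \eqref{eq3.5}, with two extra $g^2$ contributions cancelling each other.

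\textbf{Localization.} First, Lemma \ref{lip} (applied with the given $g\in L^\infty(B_R)$) yields $|\nabla f|\in L^\infty_{\rm loc}(B_R)$. Using Lemma \ref{lem2.4} with radius $3R/4$, pick a cut-off $\chi$ with $\chi\equiv 1$ on $B_{R/2}$, ${\rm supp}(\chi)\subset B_{3R/4}$, satisfying $\chi\in D(\Delta_{\E})$, $|\nabla\chi|+|\Delta_{\E}\chi|\in L^\infty(X)$, and $\Delta_{\E}\chi\in\mathbb V$. Set $\tilde f:=\chi\cdot f$, extended by zero outside ${\rm supp}(\chi)$. Since $f$ is locally Lipschitz on $B_R$ and hence bounded on ${\rm supp}(\chi)$, $\tilde f\in H^1(X)\cap L^\infty(X)$. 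The Leibniz rule \eqref{leibniz} together with Lemma \ref{lem3.3} then gives
\[
\La\tilde f=\chi g+f\cdot\La\chi+2\ip{\nabla\chi}{\nabla f}=:\tilde g,
\]
and $\tilde g\in L^\infty(X)$ has compact support (the bound on the cross term uses $|\nabla f|\in L^\infty(B_{3R/4})$ together with $|\nabla\chi|\in L^\infty$). Hence $\tilde f\in D(\Delta_{\E})$ and $\Delta_{\E}\tilde f=\tilde g\in L^2(X)\cap L^4(X)$.

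\textbf{Applying modified Bochner and integrating by parts.} Apply \eqref{modi-bochner} to $\tilde f$ with any $0\ls\phi\in H^1_0(B_{R/2})\cap L^\infty(B_{R/2})$ (extended by zero to $\mathbb V$). The conclusion $|\nabla\tilde f|^2\in\mathbb V$, combined with strong locality (since $\chi\equiv 1$ on $B_{R/2}$, one has $\nabla\tilde f=\nabla f$ and $\tilde g=g$ a.e.\ on $B_{R/2}$), already gives $|\nabla f|^2\in H^1(B_{R/2})\cap L^\infty(B_{R/2})$ and reduces \eqref{modi-bochner} to
\[
\int_{B_{R/2}}\Bigl(-\tfrac12\ip{\nabla|\nabla f|^2}{\nabla\phi}+g\cdot\ip{\nabla f}{\nabla\phi}+\phi g^2\Bigr)d\mu\gs\int_{B_{R/2}}\Bigl(K|\nabla f|^2+\tfrac{g^2}{N}\Bigr)\phi\,d\mu.
\]
Since $g\phi\in H^1_0(B_R)\cap L^\infty(B_R)$, using $\La f=g$ on $B_R$ and the Leibniz rule for $\ip{\cdot}{\cdot}$ one computes
\[
\int g\cdot\ip{\nabla f}{\nabla\phi}d\mu=\int\ip{\nabla f}{\nabla(g\phi)}d\mu-\int\phi\ip{\nabla f}{\nabla g}d\mu=-\int g^2\phi\,d\mu-\int\phi\ip{\nabla f}{\nabla g}d\mu.
\]
Substituting this into the previous inequality, the two $\int g^2\phi$ terms cancel, and \eqref{eq3.5} follows.

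\textbf{Main obstacle.} The delicate step is producing the localized function $\tilde f$ with $\Delta_{\E}\tilde f\in L^4(X)\cap L^2(X)$ required by \eqref{modi-bochner}: this needs both the interior Lipschitz regularity of Lemma \ref{lip} (to control the cross term $\ip{\nabla\chi}{\nabla f}$ in the Leibniz formula for $\La\tilde f$) and the full Leibniz/compatibility machinery of Lemmas \ref{lem3.2}--\ref{lem3.3}. Once $\tilde f$ is in place, the remaining integration by parts is purely algebraic, and the hypothesis $g\in H^1(B_R)\cap L^\infty(B_R)$ enters only there, exactly to make sense of $\nabla g$ and of $g\phi$ as an admissible test function against $\La f=g$.
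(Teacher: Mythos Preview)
Your proof is correct and follows essentially the same approach as the paper's own proof: localize $f$ by a good cut-off $\chi$ from Lemma~\ref{lem2.4}, use Lemma~\ref{lip} plus the Leibniz rule and Lemma~\ref{lem3.3} to place $\tilde f=\chi f$ in $D(\Delta_{\E})$ with $\Delta_{\E}\tilde f\in L^2\cap L^\infty$, apply the modified Bochner inequality \eqref{modi-bochner} with test functions $\phi\in H^1_0(B_{R/2})\cap L^\infty(B_{R/2})$, and then integrate by parts against $\La f=g$ using $g\phi$ as a test function. The only cosmetic difference is that the paper chooses the cut-off at scale $R$ (so $\chi\equiv1$ on $B_{2R/3}$), whereas you take it at scale $3R/4$; both choices work equally well.
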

\begin{proof}From Lemma \ref{lip} and that $g\in L^\infty(B_R)$, we know  $|\nabla f|\in L^\infty_{\rm loc}(B_R)$.

  We take a cut-off $\chi$  satisfying (i) and (ii) in Lemma \ref{lem2.4}. Let
\begin{equation*}
\widetilde{f}(x):=
\begin{cases} f\cdot \chi &{\rm if}\quad x\in B_{R} \\
0 &{\rm if}\quad x\in X \backslash B_{R}.
\end{cases}
\end{equation*}
 Then we have  $\widetilde{f}\in Lip_0(B_R)$.
It is easy to check ${\rm supp}(\La \widetilde{f})\subset B_R$. In fact,  given any $\psi\in H^1_0(X)$ with $\psi=0$ on $B_R$, the strongly local property implies that
$\int_X\ip{\nabla \widetilde{f}}{\nabla \psi}d\mu=0.$

Now we want to calculate $\La \widetilde{f}$ on $B_R$. By the Leibniz rule (\ref{leibniz}), we have, on $B_R$,
\begin{equation*}
\begin{split}
 \La\widetilde{f}=\La (f\cdot \chi)&=\chi\cdot\La f+f\cdot\La \chi+2\ip{\nabla f}{\nabla \chi}\\
 & =\chi\cdot g+f\cdot\Delta_{\mathscr E}\chi+2\ip{\nabla f}{\nabla \chi}\in L^\infty_{\rm loc}(B_R),
 \end{split}
 \end{equation*}
where we have used $g\in L^\infty(B_R)$ and $|\nabla f|\in L^\infty_{\rm loc}(B_R)$, and  that $\chi,|\nabla\chi|,\Delta_{\mathscr E}\chi\in L^\infty(X)$ in Lemma \ref{lem2.4}. Combining with ${\rm supp}(\La \widetilde{f})\subset B_R$, we have  $\La\widetilde{f} \in L^2(X)\cap L^\infty(X)$.
Therefore, by Lemma \ref{lem3.3},
 we get $\widetilde{f}\in D(\Delta_{\mathscr E}) $ and
 \begin{equation}\label{eq3.6}
L^2(X)\cap L^\infty(X)\ni\Delta_{\mathscr E}\widetilde{f}=\La \widetilde{f}=
\begin{cases} \chi\cdot g+f\cdot\Delta_{\mathscr E}\chi+2\ip{\nabla f}{\nabla \chi} &{\rm if}\quad x\in B_{R} \\
0 &{\rm if}\quad x\in X \backslash B_{R}.
\end{cases}
\end{equation}
According to  Lemma \ref{lem-boch}(ii) and $0\ls \phi\in  H^1_0(B_{R/2})\subset\mathbb V$, we conclude that
$|\nabla \widetilde{f}|^2\in \mathbb{V}$ and that
\begin{equation*}
\begin{split}
 \int_X\Big(-\frac{1}{2} \ip{\nabla |\nabla \widetilde{f}|^2}{\nabla \phi}+&\Delta_{\mathscr E}\widetilde{f}\cdot \ip{\nabla \widetilde{f}}{\nabla \phi}+ \phi\cdot(\Delta_{\mathscr E} \widetilde{f})^2\Big)d\mu\\
  &\gs  \frac{1}{N} \int_X\phi\cdot(\Delta_{\mathscr E} \widetilde{f})^2d\mu+K\int_X\phi|\nabla \widetilde{f}|^2d\mu.\qquad
\end{split}
\end{equation*}
Since  $\widetilde{f}=f$ on  $B_{R/2}$, we have $|\nabla f |=|\nabla \widetilde{f}|$ for $\mu$-a.e. on $B_{R/2}$. Notice that $|\nabla \widetilde{f}|^2\in \mathbb{V}$ implies that $|\nabla \widetilde{f}|^2\in H^1(B_{R/2}).$ Then $|\nabla f|^2\in H^1(B_{R/2}),$  and $|\nabla|\nabla f|^2|=|\nabla|\nabla \widetilde{f}|^2|$ in $L^2(B_{R/2}).$
 By (\ref{eq3.6}) and that $|\nabla \chi|=\Delta_{\mathscr E}\chi=0$ on $B_{R/2}$ (since $\chi=1$ on $B_{2R/3}$), we have $ \Delta_{\mathscr E} \widetilde{f}=g$ on $B_{R/2}$.
Hence, we obtain
\begin{equation}\label{eq3.7}
\begin{split}
 \int_{B_{R/2}}\Big(-\frac{1}{2} &\ip{\nabla |\nabla f|^2}{\nabla \phi}+g\cdot \ip{\nabla f}{\nabla \phi}+\phi\cdot g^2\Big)d\mu\\
  &\gs \frac{1}{N}\int_{B_{R/2}}\phi\cdot g^2d\mu+K\int_{B_{R/2}}\phi|\nabla f|^2d\mu.\qquad
\end{split}
\end{equation}
Noticing that $g\cdot \phi\in  H^1_0(B_{R/2})\cap L^\infty(B_{R/2})$ and $\La f=g$ on $B_R$ in the sense of distributions, we have
\begin{equation*}
\begin{split}
\int_{B_{R/2}}g\cdot g  \phi d\mu&=\La f(g  \phi)=-\int_{B_{R/2}}\ip{\nabla f}{\nabla ( g  \phi )} d\mu \\
&=-\int_{B_{R/2}}\ip{\nabla f}{\nabla   g}\cdot \phi  d\mu-\int_{B_{R/2}}\ip{\nabla f}{\nabla\phi}\cdot g d\mu.
\end{split}
\end{equation*}
By combining this and (\ref{eq3.7}), we get the desired inequality (\ref{eq3.5}). The proof is finished.
\end{proof}

By using the same argument of \cite{bq00}, one can get an improvement of the above Bochner formula. One can also consult a detailed argument given in \cite[Lemma 2.3]{jz16}.
\begin{cor}\label{lem6.1}

 Let $(X,d,\mu)$  be a  metric measure space  satisfying   $RCD^*(K,N)$ for $K\in\mathbb R$ and $N\in[1,\infty)$. Let $B_R$ be a geodesic ball with radius $R$ and centered at a fixed point $x_0$.

  Assume that $f\in H^{1}(B_R)$ satisfies
$\mathscr Lf=g $ on $B_R$ in the sense of distributions with the function $g\in H^1(B_R)\cap L^\infty(B_R)$.
Then we have $|\nabla f|^2\in H^{1}(B_{R/2}) \cap L^\infty(B_{R/2})$  and that the distribution $\La(|\nabla f|^2)$ is a signed Radon measure on  $B_{R/2}$. If its Radon-Nikodym decomposition w.r.t. $\mu$ is denoted by
$$\La(|\nabla f|^2)=  \La^{\rm ac}(|\nabla f|^2)\cdot\mu+\La^{\rm sing}(|\nabla f|^2),$$
then we have  $\La^{\rm sing}(|\nabla f|^2)\gs0$ and, for $\mu$-a.e. $x\in B_{R/2},$
$$ \frac{1}{2}\La^{\rm ac}(|\nabla  f |^2 ) \gs  \frac{g^2}{N}+\ip{\nabla f}{\nabla  g}+K|\nabla f|^2.$$
Furthermore, if $N>1$,
 for  $\mu$-a.e. $  x\in B_{R/2}\cap \big\{y:\ |\nabla f(y)|\not=0\big\}$,
\begin{equation}\label{eq3.8}
\frac{1}{2} \La^{\rm ac}(|\nabla f|^2)\gs \frac{ g^2}{N}+ \ip{\nabla f}{\nabla g}+K|\nabla f|^2+\frac{N}{N-1}\cdot\Big(\frac{\ip{ \nabla f}{\nabla |\nabla f|^2}}{2|\nabla f|^2}-\frac{ g}{N}\Big)^2.
\end{equation}
\end{cor}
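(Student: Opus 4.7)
The plan has two parts. The first---the Radon measure structure of $\La(|\nabla f|^2)$ together with the a.e.\ bound on its absolutely continuous part---is essentially a repackaging of Theorem \ref{theorem-boch}. By Lemma \ref{lip} we have $|\nabla f|\in L^\infty_{\rm loc}(B_R)$, so the function $F := \frac{g^2}{N}+\ip{\nabla f}{\nabla g}+K|\nabla f|^2$ lies in $L^1_{\rm loc}(B_{R/2})$. Theorem \ref{theorem-boch} then says $\frac12\La(|\nabla f|^2)-F\mu\gs 0$ as a distribution; since a nonnegative distribution is automatically a nonnegative Radon measure $\nu$, we get $\frac12\La(|\nabla f|^2)=F\mu+\nu$ as a signed Radon measure on $B_{R/2}$. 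The Radon-Nikodym decomposition of $\nu$ immediately yields $\La^{\rm sing}(|\nabla f|^2)\gs 0$ together with $\frac12\La^{\rm ac}(|\nabla f|^2)\gs F$ $\mu$-a.e.

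For the refinement \eqref{eq3.8} I would follow the Bakry-Qian trick of applying the first part of the corollary to the composition $\eta(f)$ for a free $\eta\in C^2(\R)$. The Chain rule of Lemma \ref{lem3.2} gives
\begin{equation*}
|\nabla\eta(f)|^2=\eta'(f)^2|\nabla f|^2,\qquad \La\eta(f)=\eta'(f)\,g+\eta''(f)|\nabla f|^2.
\end{equation*}
Choosing $\eta$ with $\eta',\eta'',\eta'''$ bounded on the range of $f$, and shrinking $R$ slightly so that $|\nabla f|\in L^\infty$ on the smaller ball, one checks that $\La\eta(f)\in H^1\cap L^\infty$ there, so the part already established applies to $\eta(f)$. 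Expanding every occurrence of $\nabla$ and $\La$ through the Chain and Leibniz rules (Lemma \ref{lem3.2}), cancelling $\eta'(f)^2>0$ on the set $\{|\nabla f|\neq 0\}$, and regrouping, the resulting pointwise inequality takes the shape
\begin{equation*}
\frac12\La^{\rm ac}(|\nabla f|^2)\gs \frac{g^2}{N}+\ip{\nabla f}{\nabla g}+K|\nabla f|^2+|\nabla f|^2\,Q(a),
\end{equation*}
$\mu$-a.e., where $a:=\eta''(f(x))/\eta'(f(x))\in\R$ and $Q(a)$ is an explicit concave quadratic in $a$ whose pointwise supremum over $a\in\R$ equals $\frac{N}{N-1}\bigl(\frac{\ip{\nabla f}{\nabla|\nabla f|^2}}{2|\nabla f|^2}-\frac{g}{N}\bigr)^2 /|\nabla f|^2$.

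To turn ``there exists $\eta$'' into ``for every $a\in\R$'', I would fix a $\mu$-Lebesgue point $x_0$ of the finitely many $L^1_{\rm loc}$ densities appearing above with $|\nabla f(x_0)|\neq 0$; since $f$ is locally Lipschitz on $B_R$ by Lemma \ref{lip}, for any prescribed $a_0\in\R$ one can construct $\eta\in C^2(\R)$ with $\eta'(f(x_0))=1$, $\eta''(f(x_0))=a_0$ and with $\eta',\eta'',\eta'''$ globally bounded on the range of $f$, making the quadratic inequality at $x_0$ hold with $a=a_0$. Taking the supremum over $a_0\in\R$ at each such $x_0$ then produces \eqref{eq3.8}. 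The main obstacle I expect is the purely algebraic bookkeeping of the expansion and checking that the resulting $Q(a)$ is concave with maximum equal to the claimed Bakry-Qian remainder; the measure-theoretic step and the $\eta$-construction are routine.
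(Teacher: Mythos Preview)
Your proposal is correct and follows exactly the approach the paper indicates: the first part is the standard ``nonnegative distribution is a nonnegative Radon measure'' argument applied to Theorem~\ref{theorem-boch}, and the refined inequality \eqref{eq3.8} is precisely the Bakry--Qian trick from \cite{bq00} (applying the basic estimate to $\eta\circ f$ and optimizing over $a=\eta''/\eta'$), which is all the paper cites in lieu of a proof. Your Lebesgue-point justification for passing from ``for each $\eta$, $\mu$-a.e.\ $x$'' to ``for $\mu$-a.e.\ $x$, every $a$'' is sound because $f$ is continuous (Lemma~\ref{lip}) and the $\eta'''$ terms cancel, so the relevant density is a polynomial in the continuous functions $\eta'(f),\eta''(f)$ with $\eta$-independent $L^1_{\rm loc}$ coefficients; alternatively one can use $\eta_a(t)=e^{at}$ for $a\in\mathbb{Q}$ and take a countable intersection.
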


\section{The maximum principle}
Let $K\in\mathbb R$ and $N\in [1,\infty)$ and let $(X,d,\mu)$  be  a metric measure space satisfying $RCD^*(K,N)$.  In this section, we will study the maximum principle on $(X,d,\mu)$. Let us begin from the Kato's inequality for \emph{weighted measures}.
\subsection{The Kato's inequality} $\ $

Let  $\Omega$  be a bounded  open set of $(X,d,\mu)$. Fix  any $w\in  H^{1}(\Omega)\cap L^\infty(\Omega)$, we consider the weighted measure
$$\mu_w:=e^w\cdot\mu\quad {\rm on}\ \ \Omega.$$
Since, the density $e^{-\|w\|_{L^\infty(\Omega)}}\ls e^w\ls e^{\|w\|_{L^\infty(\Omega)}}$ on $\Omega$, we know that the associated the Lebesgue space $L^p(\Omega,\mu_w)$ and the Sobolev spaces $W^{1,p}(\Omega,\mu_w)$ are equivalent to the original $L^p(\Omega) $ and $W^{1,p}(\Omega)$, respectively, for all $p\gs1.$
Both the measure doubling property and the $L^2$-Poincare inequality still hold with respect to this measure $\mu_w$ (the constants, of course, depend on $\|w\|_{L^\infty(\Omega)}$).

For this measure $\mu_w$, we defined the associated Laplacian $\La_w$ on $f\in H^1(\Omega)$ by
$$\La_wf(\phi):=-\int_\Omega\ip{\nabla f}{\nabla \phi}d\mu_w\  \ \Big(=-\int_\Omega\ip{\nabla f}{\nabla \phi}e^wd\mu\Big)$$
for any $\phi\in H^1_0(\Omega)\cap L^\infty(\Omega).$ It is easy to check that
$$\La_wf=e^w\cdot\La f+e^w\cdot\ip{\nabla w}{\nabla f}$$
in the sense of distributions, i.e., $\La_wf(\phi)= \La f(e^w\cdot\phi)+ \int_\Omega\ip{\nabla w}{\nabla f}  e^w\cdot\phi d\mu.$

When $\Omega$ be a domain of the Euclidean space $\mathbb R^N$ with dimension $N\gs1$, the classical Kato's inequality  states that given any function $f\in L^1_{\rm loc}(\Omega)$  such that $\Delta f\in L^1_{\rm loc}(\Omega)$, then
$\Delta f_+$  is a signed Radon measure and the following holds:
$$\Delta f_+\gs \chi[f\gs0]\cdot\Delta f$$
in the sense of distributions, where $f_+:=\max\{f,0\}.$ Here,
$\chi[f\gs0](x)=1$ for $x$ such that $f(x)\gs0$ and $\chi[f\gs0](x)=0$ for $x$ such that $f(x)<0.$
In \cite{bp04}, the result was extended to  the case when $\Delta f$ is a signed Radon measure.

In the following, we will extend the Kato's inequality to the  metric measure spaces $(X,d,\mu_w)$, under assumption $f\in H^1(\Omega)$.
\begin{prop}[Kato's inequality]\label{kato}
Let  $ \Omega $  be a bounded  open set of $(X,d,\mu)$ and let $w\in  H^{1}(\Omega)\cap L^\infty(\Omega)$. Assume that $f\in H^1(\Omega)$ such that $\La_wf $ is a signed Radon measure.
Then
$\La_w f_+$  is a signed Radon measure and the following holds:
\begin{equation}\label{kato-ineq}
\La_w f_+\gs \chi[f\gs0]\cdot\La_w  f  \quad {\rm on}\ \ \Omega,
\end{equation}
in the sense of distributions. In the sequel, we denote the Radon-Nikodym decomposition $\La_wf=\La_w^{\rm ac}f\cdot\mu_w+\La_w^{\rm sing}f.$
\end{prop}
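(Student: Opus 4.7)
The plan is to mimic the classical proof of Kato's inequality via a smooth convex approximation of the positive-part function. For each $k\in\mathbb N$, I would choose $\eta_k\in C^2(\mathbb R)$ non-decreasing and convex, with $\eta_k(t)=0$ for $t\ls 0$, $\eta_k(t)=t-1/(2k)$ for $t\gs 1/k$, glued by a $C^2$ interpolation on $[0,1/k]$, so that $0\ls\eta_k'\ls 1$, $\eta_k''\gs 0$, $\eta_k(t)\uparrow t_+$, and $\eta_k'(t)\to\chi[t>0]$ pointwise. Since each $\eta_k$ is globally Lipschitz with $\eta_k(0)=0$, the composition $\eta_k(f)$ lies in $H^1(\Omega)$ whenever $f\in H^1(\Omega)$, with $\nabla\eta_k(f)=\eta_k'(f)\nabla f$, so no \emph{a priori} $L^\infty$ bound on $f$ is needed.

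The next step is to apply the chain rule to $\eta_k(f)$ with respect to the weighted operator $\La_w$. Combining the identity $\La_w g=e^w\La g+e^w\ip{\nabla w}{\nabla g}$ recorded just above the statement with Lemma \ref{lem3.2}(i), one obtains the weighted chain rule
\begin{equation*}
\La_w[\eta_k(f)]=\eta_k'(f)\cdot\La_w f+\eta_k''(f)\cdot|\nabla f|^2\cdot\mu_w
\end{equation*}
as distributions on $\Omega$, where the product $\eta_k'(f)\cdot\La_w f$ is interpreted in the sense of (\ref{eq3.1}), namely $[\eta_k'(f)\cdot\La_w f](\phi):=\La_w f(\eta_k'(f)\phi)$ for test functions $\phi\in H^1_0(\Omega)\cap L^\infty(\Omega)$; this is well-defined because $\eta_k'(f)\phi\in H^1_0(\Omega)\cap L^\infty(\Omega)$. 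Since $\eta_k''\gs 0$ and $|\nabla f|^2\gs 0$, dropping the non-negative second term yields the crucial distributional inequality $\La_w[\eta_k(f)]\gs\eta_k'(f)\cdot\La_w f$ on $\Omega$.

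Then I would pass to the limit $k\to\infty$. On the left-hand side, dominated convergence gives $\eta_k(f)\to f_+$ in $L^2_{\rm loc}$ and $\nabla\eta_k(f)=\eta_k'(f)\nabla f\to\chi[f>0]\nabla f=\nabla f_+$ in $L^2_{\rm loc}(\Omega,\mu_w)$, hence $\La_w\eta_k(f)\to\La_w f_+$ distributionally. On the right-hand side, for each non-negative test $\phi\in H^1_0(\Omega)\cap L^\infty(\Omega)$, the values $\La_w f(\eta_k'(f)\phi)=\int\eta_k'(f)\phi\,d(\La_w f)$ converge to $\int\chi[f>0]\phi\,d(\La_w f)$ by bounded convergence applied to the Hahn decomposition of the signed Radon measure $\La_w f$. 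The two convergences together produce $\La_w f_+\gs\chi[f>0]\cdot\La_w f$ distributionally, which upon rearranging identifies $\La_w f_+$ as the sum of a signed Radon measure and a non-negative distribution, hence itself a signed Radon measure by the Riesz representation theorem; the Radon-Nikodym decomposition $\La_w f=\La_w^{\rm ac}f\cdot\mu_w+\La_w^{\rm sing}f$ announced at the end of the statement is then immediate. Replacing $\chi[f>0]$ by $\chi[f\gs 0]$ only amounts to adding $\chi[f=0]\cdot\La_w f$ to the right-hand side, whose contribution to the absolutely continuous part is killed by the Sobolev identity $|\nabla f|=0$ $\mu$-a.e.\ on $\{f=0\}$ together with a Stampacchia-type argument.

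The main obstacle is the rigorous interpretation of the product $\chi[f>0]\cdot\La_w f$ when $\La_w f$ carries a nontrivial singular part $\La_w^{\rm sing} f$, since the indicator of $\{f>0\}$ must be evaluated on a $\mu_w$-null set where $f$ is only defined up to a $\mu_w$-null modification. This is resolved by invoking the capacity theory associated with the canonical Dirichlet form $(\mathscr E,\mathbb V)$ of $(X,d,\mu)$, which furnishes a quasi-continuous representative of any $H^1$ function defined quasi-everywhere, and \emph{a fortiori} $|\La_w f|$-almost everywhere, since signed Radon measures of finite energy do not charge polar sets. The bounded convergence step in the previous paragraph is then justified using this fixed representative.
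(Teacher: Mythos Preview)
Your argument is correct and takes a genuinely different route. The paper instead reduces to the equivalent statement $\La_w|f|\gs{\rm sgn}(f)\cdot\La_w f$ and approximates $|f|$ by $f_\epsilon:=(f^2+\epsilon^2)^{1/2}$. The point of that choice is the algebraic identity $f_\epsilon^2=f^2+\epsilon^2$: applying the Leibniz/chain rule to both sides yields $f_\epsilon\,\La_w f_\epsilon+|\nabla f_\epsilon|^2=f\,\La_w f+|\nabla f|^2$, and since $|\nabla f_\epsilon|=\frac{|f|}{f_\epsilon}|\nabla f|\ls|\nabla f|$ one reads off $\La_w f_\epsilon\gs(f/f_\epsilon)\,\La_w f$ directly, with no explicit second-derivative term to discard. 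A weak $H^1$-limit along a subsequence then gives the claim for $|f|$.

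Your approach via a general $C^2$ convex approximation $\eta_k$ of $t\mapsto t_+$ is more transparent about where convexity enters and has the advantage that $\eta_k',\eta_k''$ are bounded, so the chain rule is available without any $L^\infty$ hypothesis on $f$; note, however, that Lemma~\ref{lem3.2}(i) as stated does assume $f\in L^\infty(\Omega)$, so you should remark that its proof goes through unchanged once $\eta'$ and $\eta''$ are bounded, which is exactly your situation. You are also more explicit than the paper about interpreting the product $\chi[f\gs0]\cdot\La_w f$ on the singular part: your observation that $|\La_w f(\phi)|\ls\|\nabla f\|_{L^2(\mu_w)}\|\nabla\phi\|_{L^2(\mu_w)}$ forces $|\La_w f|$ not to charge polar sets, so that the quasi-continuous representative of $f$ settles the bounded-convergence step, is the right justification. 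For the final passage from $\chi[f>0]$ to $\chi[f\gs0]$, a cleaner route than the Stampacchia argument you sketch is to apply your inequality to $-f$, obtaining $\La_w f_-\gs-\chi[f<0]\cdot\La_w f$, and then use $\La_w f_+=\La_w f+\La_w f_-$.
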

\begin{proof}
It suffices to prove the following equivalent property:
\begin{equation}\label{eq4.2}
\La_w|f|\gs {\rm sgn}(f)\cdot\La_w f,
\end{equation}
where ${\rm sgn}(t)=1$ for $t>0$, ${\rm sgn}(t)=-1$ for $t<0$, and ${\rm sgn}(t)=0$ for $t=0.$

Fix any $\epsilon>0$ and let
$$f_\epsilon(x):=(f^2+\epsilon^2)^{1/2}\gs \epsilon.$$
We have $f^2_\epsilon=f^2+\epsilon^2$,
\begin{equation}\label{eq4.3}
 |\nabla f_\epsilon|=\frac{|f|}{f_\epsilon}|\nabla f|\ls |\nabla f|
 \end{equation}
and
$$2f_\epsilon\cdot \La_wf_\epsilon+2|\nabla f_\epsilon|^2=\La_w f^2_\epsilon=\La_wf^2=2f\cdot \La_wf+2|\nabla f|^2.$$
Thus,
\begin{equation}\label{eq4.4}
\La_wf_\epsilon\gs \frac{f}{f_\epsilon}\cdot\La_wf,
\end{equation}
 Notice that $|\nabla f_\epsilon|\ls |\nabla f|$  and $f_\epsilon \to |f|$ in $L^2(\Omega)$ implies that $f_\epsilon$ is bounded in $H^1(\Omega)$ and, hence, there exists a subsequence $f_{\epsilon_j}$ converging weakly to $|f|$ in $H^1(\Omega)$. Thus, the measures $\La_w(f_{\epsilon_j})$ converges weakly to $\La_w|f|.$
On the other hand, notice that  $f_\epsilon(x)\to |f(x)|$ for each $x\in\Omega$ and that $|f/f_\epsilon|\ls1$  on $\Omega$. Letting $\epsilon:=\epsilon_j\to 0$ in (\ref{eq4.4}),  we conclude that
$$\La_w(|f|) \gs \frac{f}{|f|}\cdot \La_w f.$$
This is (\ref{eq4.2}), and the proof is completed.
\end{proof}

\subsection{Maximum principles}$ \ $

 The above Kato's inequality implies the maximum principle Theorem \ref{thm1.3}. Precisely, we have the following.
 \begin{thm}\label{max-elliptic}
Let $\Omega$ be a bounded domain.
Let $f(x)\in H^{1}(\Omega)\cap L^\infty_{\rm loc}(\Omega)$  such that  $\La f $ is a signed Radon measure with $\La^{\rm sing}f\gs 0$.
Suppose that $f$ achieves one of its   strict maximum in  $\Omega$ in the sense that: there exists a neighborhood $U\subset\subset \Omega$ such that
\begin{equation}\label{equation4.5}
\sup_{U}f>\sup_{\Omega\backslash U}f.
\end{equation}
Here and in the sequel of the paper, the notion $\sup_{U}f$ means always ${\rm ess}\sup_{U}f.$ Then, given any $w\in  H^{1}(\Omega)\cap L^\infty(\Omega)$, for any $\varepsilon>0$, we have
\begin{equation}\label{equation4.6}
\mu\Big\{x:\ f(x)\gs\sup_\Omega f-\varepsilon\ \ {\rm and}\ \ \La^{\rm ac}f(x)+\ip{\nabla f}{\nabla w}(x)\ls \varepsilon \Big\} >0.
\end{equation}
In particular, there exists a sequence of points $\{x_j\}_{j\in\mathbb N}\subset U$  such that they are the approximate continuity points of $\La^{\rm ac}f$ and $\ip{\nabla f}{\nabla w}$, and that
$$f(x_j)\gs \sup_\Omega f-1/j\qquad{\rm and }\qquad  \La^{\rm ac}f(x_j)+\ip{\nabla f}{\nabla w}(x_j)\ls 1/j.$$
\end{thm}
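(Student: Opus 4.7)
The strategy is to apply the Kato inequality (Proposition \ref{kato}) with the weighted measure $\mu_w = e^w \mu$ to a truncation of $f$ near its maximum, and then integrate the resulting inequality against a cut-off function that acts as a surrogate for the constant $1$ on the support of the truncation.

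Set $M := \sup_\Omega f$. By the strict maximum hypothesis, for each sufficiently small $\delta > 0$ we have $\sup_{\Omega \setminus U} f < M - \delta$, so the truncation $g_\delta := (f - M + \delta)_+$ lies in $H^1(\Omega)$ and vanishes $\mu$-a.e. outside a compact subset of $U$. Since $w \in H^1(\Omega) \cap L^\infty(\Omega)$ and $\La f$ is a signed Radon measure, the identity $\La_w f = e^w \La f + e^w \ip{\nabla w}{\nabla f} \cdot \mu$ exhibits $\La_w f$ as a signed Radon measure with Radon-Nikodym components
$$\La_w^{\rm ac} f = e^w\bigl(\La^{\rm ac} f + \ip{\nabla w}{\nabla f}\bigr), \qquad \La_w^{\rm sing} f = e^w \La^{\rm sing} f \geq 0.$$
Because $\La_w$ kills constants, $\La_w(f - M + \delta) = \La_w f$, and Proposition \ref{kato} applied to $f - M + \delta$ yields
$$\La_w g_\delta \geq \chi[f \geq M - \delta] \cdot \La_w f$$
as signed Radon measures on $\Omega$.

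Next, choose a Lipschitz cut-off $\eta : X \to [0,1]$ with $\eta \equiv 1$ on a neighborhood of $\overline{U}$ and $\mathrm{supp}(\eta) \subset \Omega$; such an $\eta$ exists because $U \subset\subset \Omega$, and it lies in $H^1_0(\Omega) \cap L^\infty(\Omega)$. The strong locality of the inner product forces $\ip{\nabla g_\delta}{\nabla \eta} = 0$ $\mu$-a.e.: on $\{f < M - \delta\}$ the first factor vanishes, while on $\{f \geq M - \delta\} \subset U$ the second factor vanishes because $\eta \equiv 1$ there. Thus $\La_w g_\delta(\eta) = 0$, and since $\eta \equiv 1$ on $\{f \geq M - \delta\}$, testing the Kato inequality against $\eta \geq 0$ yields
$$0 \geq \La_w f\bigl(\{f \geq M - \delta\}\bigr) \geq \int_{\{f \geq M - \delta\}} e^w\bigl(\La^{\rm ac} f + \ip{\nabla w}{\nabla f}\bigr) d\mu,$$
the second inequality using $\La_w^{\rm sing} f \geq 0$. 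Since $\mu\{f \geq M - \delta\} > 0$ (as $M$ is the essential supremum) and $e^w > 0$, the integrand must be $\leq 0$ on a set of positive $\mu$-measure inside $\{f \geq M - \delta\}$. Taking $\delta < \varepsilon$ gives (\ref{equation4.6}); the sequence $\{x_j\}$ is then obtained by invoking the Lebesgue differentiation theorem on the doubling space $(X, d, \mu)$ to pick, inside the positive-measure set produced with $\varepsilon = 1/j$, a point that is an approximate continuity point of both $\La^{\rm ac} f$ and $\ip{\nabla f}{\nabla w}$.

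The main technical point is showing $\La_w g_\delta(\eta) = 0$: this replaces the global calculation $\int d\La_w g_\delta = 0$ (which is not \emph{a priori} meaningful since $X$ need not be compact) with a local one made possible precisely by the strict maximum hypothesis, which forces $g_\delta$ to have compact support inside $\Omega$. Switching to the weighted Laplacian $\La_w$ instead of $\La$ is the device that lets the gradient-coupling term $\ip{\nabla f}{\nabla w}$ enter the Kato inequality rather than being left uncontrolled.
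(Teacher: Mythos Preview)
Your argument is correct and reaches the conclusion by a somewhat different route than the paper. The paper argues by contradiction: assuming (\ref{equation4.6}) fails, the Kato inequality gives $\La_w\big(f-(M-\varepsilon_0)\big)_+\gs 0$, i.e.\ this truncation is $\La_w$-subharmonic, and then the \emph{weak maximum principle} of Cheeger \cite[Theorem~7.17]{che99} forces $\big(f-(M-\varepsilon_0)\big)_+\equiv 0$, a contradiction. Your proof instead tests the Kato inequality directly against a Lipschitz cut-off $\eta$ that equals $1$ on a neighbourhood of $\overline U$, exploiting strong locality to see that $\La_w g_\delta(\eta)=0$, and so obtains $\int_{\{f\gs M-\delta\}}\La^{\rm ac}_w f\,d\mu_w\ls 0$ without invoking any maximum principle. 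This is slightly more elementary, bypassing the appeal to \cite{che99}, and makes transparent why the strict-maximum hypothesis is needed (to make $g_\delta$ compactly supported inside $U$, hence disjoint from $\{\nabla\eta\neq 0\}$). One small notational point: writing ``$\La_w f(\{f\gs M-\delta\})$'' is slightly ambiguous because the set is only defined up to $\mu$-null sets while $\La_w^{\rm sing}f$ may charge such sets; but since you only need the lower bound by the absolutely continuous piece, and $\eta\cdot\chi[f\gs M-\delta]\gs 0$ everywhere while $\La_w^{\rm sing}f\gs 0$, the chain of inequalities is unaffected.
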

 \begin{proof}
Suppose the first assertion (\ref{equation4.6}) fails for some sufficiently small $\varepsilon_0>0.$ Then we have  $\big(f-(\sup_\Omega f-\varepsilon_0)\big)_+\in H^1_0(\Omega)$ (by  the maximal property (\ref{equation4.5})) and
$$\mu\Big\{x:\ f(x)\gs\sup_\Omega f-\varepsilon_0\ \ {\rm and}\ \ \La^{\rm ac}f+\ip{\nabla f}{\nabla w}\ls \varepsilon_0 \Big\} =0.$$
Then for almost $x\in\{y:\ f(y)\gs \sup_\Omega f-\varepsilon_0\}$ we have
$$\La_w^{\rm ac}f(x)\cdot\mu_w=e^{w(x)}\cdot (\La^{\rm ac}f+\ip{\nabla f}{\nabla w})(x)\cdot\mu>e^{-\|w\|_{L^\infty}}\varepsilon_0\cdot\mu>0.$$
The assumption $\La^{\rm sing}f\gs 0$ implies that $\La_w^{\rm sing}f\gs 0$.
By applying the Proposition \ref{kato} to the function $f-(\sup_\Omega-\varepsilon_0)$, we have
$$\La_w \big(f-(\sup_\Omega f-\varepsilon_0)\big)_+\gs \chi[f\gs \sup_\Omega f-\varepsilon_0]\cdot\La_w^{\rm ac}f\cdot\mu_w\gs0$$
 on $\Omega,$ in the sense of distributions. Recall that the metric measure space $(X,d,\mu_w)$  satisfies a doubling property and supports a $L^2$-Poincare inequality.  Now the weak maximum principle \cite[Theorem 7.17]{che99} implies that $\big(f-(\sup_\Omega f-\varepsilon_0)\big)_+=0$ on $\Omega.$ Thus, $\sup_\Omega f\ls  \sup_\Omega f-\varepsilon_0$ on $\Omega$. This is a contradiction, and proves the first assertion (\ref{equation4.6}).

The second assertion follows from the first one by taking $\varepsilon=1/j.$
\end{proof}

Next, let us consider the parabolic version of the maximum principle.
We need  the following  parabolic  weak maximum principle.
\begin{lem}\label{lemma4.3}
Let $\Omega$ be a bounded open subset and let $T>0$. Let $w\in  H^{1}(\Omega_T)\cap L^\infty(\Omega_T)$ with $\partial_tw(x,t)\ls C$ for some constant $C>0$, for almost all $(x,t)\in\Omega_T$.
Suppose that  $f(x,t)  \in H^{1}(\Omega_T)\cap L^\infty(\Omega_T)$ with
$ \lim_{t\to0}\|f(\cdot,t)\|_{L^2(\Omega)}=0$ and, for almost all $t\in(0,T)$, that the functions $f(\cdot,t)\in H^1_0(\Omega)$.
  Assume that, for almost every  $t\in (0,T)$, the function $f(\cdot,t)$ satisfies
\begin{equation}\label{eq4.7}
\La_{w(\cdot,t)} f(\cdot,t) -\frac{\partial}{\partial t} f(\cdot,t)\cdot\mu_{w(\cdot,t)}  \gs   0\ \quad   {\rm on}\ \ \Omega
\end{equation}
Then we have
$$\sup_{\Omega\times(0,T)}f(x,t) \ls0.$$
\end{lem}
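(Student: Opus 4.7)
The plan is the classical energy method: test the distributional inequality against the truncation $f_+(\cdot,t)$, derive an ODE for a weighted $L^2$-energy of $f_+$, and apply Gronwall's inequality.

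First, I would observe that for a.e.\ $t\in(0,T)$ the function $f_+(\cdot,t)$ lies in $H^1_0(\Omega)\cap L^\infty(\Omega)$ and is non-negative, hence is an admissible test function in (\ref{eq4.7}). Unwinding the definition of $\La_{w(\cdot,t)}$, this yields
\begin{equation*}
\int_\Omega\partial_tf(\cdot,t)\cdot f_+(\cdot,t)\,d\mu_{w(\cdot,t)}\ls -\int_\Omega\ip{\nabla f}{\nabla f_+}(\cdot,t)\,d\mu_{w(\cdot,t)} =-\int_\Omega|\nabla f_+|^2(\cdot,t)\,d\mu_{w(\cdot,t)}\ls 0,
\end{equation*}
where the chain-rule identity $\nabla f_+=\chi_{\{f>0\}}\nabla f$, valid in the $RCD^*(K,N)$ Sobolev calculus, is used in the middle equality.

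Second, I would introduce the weighted energy
\begin{equation*}
\eta(t):=\int_\Omega f_+^2(x,t)\,e^{w(x,t)}\,d\mu(x).
\end{equation*}
Since $f,w\in H^1(\Omega_T)\cap L^\infty(\Omega_T)$, the function $\partial_t(f_+^2\cdot e^w)=2f_+\partial_tf\cdot e^w+f_+^2e^w\partial_tw$ lies in $L^1(\Omega_T)$, and a Fubini/Steklov-averaging argument shows that $\eta$ is absolutely continuous on $[0,T)$ with
\begin{equation*}
\eta'(t)=2\int_\Omega f_+\,\partial_tf\,d\mu_{w(\cdot,t)}+\int_\Omega f_+^2\,\partial_tw\,d\mu_{w(\cdot,t)}\ls C\eta(t)
\end{equation*}
for a.e.\ $t$, where the first integral is bounded by zero via the previous step and the second by $C\eta(t)$ using the hypothesis $\partial_tw\ls C$. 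The initial-data assumption $\lim_{t\to 0}\|f(\cdot,t)\|_{L^2(\Omega)}=0$ together with the $L^\infty$-bound on $w$ forces $\lim_{t\to 0}\eta(t)=0$, so Gronwall's inequality gives $\eta\equiv 0$ on $[0,T)$. Thus $f_+=0$ $\mu$-a.e.\ on $\Omega_T$, i.e., $f\ls 0$ a.e., which is the claim.

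The main obstacle is justifying the absolute continuity of $\eta$ and the time-differentiation performed above in the non-smooth metric-measure setting, since a priori $f$ only satisfies $f(\cdot,t)\in H^1_0(\Omega)$ for a.e.\ $t$ and no pointwise-in-$t$ continuity into $H^1_0$ is available. The standard remedy is to regularise $f$ and $w$ in the time variable (Steklov averages $f_\varepsilon(x,t):=\frac{1}{\varepsilon}\int_t^{t+\varepsilon}f(x,s)\,ds$ and analogously for $w$), carry out the above chain-rule computation for the regularised functions, and then pass $\varepsilon\to 0$ using $\partial_tf,\partial_tw\in L^2(\Omega_T)$. The spatial chain-rule identities required, in particular $\ip{\nabla f}{\nabla f_+}=|\nabla f_+|^2$ and $\partial_tf\cdot f_+=\tfrac{1}{2}\partial_t(f_+^2)$, are instances of Lemma \ref{lem3.2} and the standard Sobolev truncation lemma on $RCD^*(K,N)$ spaces.
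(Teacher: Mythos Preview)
Your proposal is correct and is essentially the same argument as the paper's proof: both run the energy method on the weighted $L^2$-norm of $f_+$, use the test-function inequality to bound the derivative by $C$ times the energy, and conclude via the initial condition and Gronwall. The paper simply replaces $f$ by $f_+$ at the outset and then computes $\xi(t)=\int_\Omega f^2\,d\mu_{w(\cdot,t)}$ with $\xi'(t)\ls C\xi(t)$, omitting the Steklov-average justification that you (rightly) flag as the only point needing care.
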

\begin{proof}
The proof is standard via a Gaffney-Davies' method  (see also  \cite[Lemma 1.7]{stu95}).   We include a proof here for the completeness.  Since $f_+$ meets all of conditions in this lemma, by replacing $f$ by $f_+$, we can assume that $f\gs0.$

Put
$$\xi(t):=\int_\Omega f^2(\cdot,t)d\mu_{w(\cdot,t)}.$$
Since $\mu_{w(\cdot,t)}=e^{w}\cdot\mu\ls e^{\|w\|_{L^\infty}}\cdot\mu$ and $f\in H^{1}(\Omega_T)$, we have, for almost all $t\in(0,T)$,
\begin{equation*}
\begin{split}
\xi'(t)&=\int_{\Omega}\partial_t(f^2)d\mu_{w(\cdot,t)}+\int_{\Omega} f^2\cdot\partial_tw\cdot d\mu_{w(\cdot,t)}\\
&\ls -2\int_{\Omega}|\nabla f|^2d\mu_{w(\cdot,t)}+C\cdot\xi(t)\ls C\cdot\xi(t),
\end{split}
\end{equation*}
where we have used $\partial_tw\ls C$ and that the functions $f(\cdot,t)\in H^1_0(\Omega)\cap L^\infty(\Omega)$ for almost all $t\in(0,T)$.
By using $\lim_{t\to0}\xi(t)=0$ (since $\xi(t)\ls e^{\|w\|_{L^\infty}}\cdot\|f(\cdot,t)\|_{L^2(\Omega)}$ and the assumption $\lim_{t\to0}\|f(\cdot,t)\|_{L^2(\Omega)}=0$), one can obtain that
$\xi(t)\ls0$. This implies $f=0$ almost all in $\Omega_T$. The proof is finished.
\end{proof}
By using the same argument as in Theorem \ref{max-elliptic},  the combination of the Kato's inequality and  Lemma \ref{lemma4.3} implies  the following parabolic maximum principle.
\begin{thm}\label{max-parabolic}
Let $\Omega$ be a bounded domain  and let $T>0$.
Let $f(x,t)\in H^{1}(\Omega_T)\cap L^\infty(\Omega_T)$ and suppose that $f$ achieves one of its strict maximum in $\Omega\times(0,T]$ in the sense that:  there exists a neighborhood $U\subset\subset \Omega$ and an interval $(\delta,T]\subset   (0,T]$ for some $\delta>0$ such that
$$\sup_{U\times(\delta,T]}f>\sup_{\Omega_T\backslash ( U\times(\delta,T])}f .$$
Here $\sup_{U\times(\delta,T]}f$ means ${\rm ess}\sup_{U\times(\delta,T]}f$. Assume that, for almost every $t\in(0,T)$,  $\La f(\cdot, t) $ is a signed Radon measure with $\La^{\rm sing}f(\cdot,t)\gs 0$.
Let  $w\in  H^{1}(\Omega_T)\cap L^\infty(\Omega_T)$  with $\partial_tw(x,t)\ls C$ for some constant $C>0$, for almost all $(x,t)\in\Omega_T$.  Then, for any $\varepsilon>0$, we have
\begin{equation*}
(\mu\times \mathcal L^1)\Big\{(x,t):\ f(x,t)\gs\sup_{\Omega_T} f-\varepsilon\ \ {\rm and}\ \ \La^{\rm ac}f(x,t)+\ip{\nabla f}{\nabla w}(x,t)-\frac{\partial}{\partial t}f(x,t)\ls \varepsilon \Big\} >0,
\end{equation*}
where $\mathcal L^1$ is the 1-dimensional Lebesgue's measure on $(\delta,T]$.

In particular, there exists a sequence of points $\{(x_j,t_j)\}_{j\in\mathbb N}\subset U\times(\delta,T]$  such that every $x_j$ is  an approximate continuity point of $\La^{\rm ac}f(\cdot, t_j)$ and $\ip{\nabla f}{\nabla w}(\cdot, t_j)$, and that
$$f(x_j,t_j)\gs \sup_{\Omega_T} f-1/j\quad{\rm and }\quad  \La^{\rm ac}f(x_j,t_j)+\ip{\nabla f}{\nabla w}(x_j,t_j)-\frac{\partial}{\partial t}f(x_j,t_j)\ls 1/j.$$
\end{thm}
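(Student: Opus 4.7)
The plan is to parallel the elliptic argument of Theorem \ref{max-elliptic}, with the parabolic weak maximum principle Lemma \ref{lemma4.3} taking the place of the elliptic one, and with the Kato inequality (Proposition \ref{kato}) applied slice-by-slice in time. I would argue the first (measure-positivity) assertion by contradiction: suppose that for some small $\varepsilon_0>0$, strictly less than the essential gap $\sup_{U\times(\delta,T]}f - \sup_{\Omega_T\setminus(U\times(\delta,T])}f$, the set in question has $(\mu\times\mathcal L^1)$-measure zero. Then a.e.\ on $E_{\varepsilon_0}:=\{f\gs \sup_{\Omega_T}f - \varepsilon_0\}$ one has $\La^{\rm ac}f + \ip{\nabla f}{\nabla w} - \partial_t f > \varepsilon_0$. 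Using the identity $\La_w h = e^w\La h + e^w\ip{\nabla w}{\nabla h}$, whose Radon--Nikodym decomposition with respect to $\mu_w = e^w\mu$ gives $\La_w^{\rm ac}f = \La^{\rm ac}f + \ip{\nabla f}{\nabla w}$ and $\La_w^{\rm sing}f = e^w\La^{\rm sing}f \gs 0$, this translates to $\La_w^{\rm ac}f(\cdot,t) - \partial_t f(\cdot,t) > \varepsilon_0$ a.e.\ on $E_{\varepsilon_0}$, with the singular part still nonnegative.

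The next step is to introduce $g(x,t):=(f(x,t)-c)_+$ with $c:=\sup_{\Omega_T}f-\varepsilon_0$. The choice of $\varepsilon_0$ ensures that $g\in H^1(\Omega_T)\cap L^\infty(\Omega_T)$ is supported in $\overline U\times[\delta,T]$; hence $g(\cdot,t)\in H^1_0(\Omega)$ for a.e.\ $t$, and $g(\cdot,t)\equiv 0$ for $t\in(0,\delta)$, yielding $\lim_{t\to 0}\|g(\cdot,t)\|_{L^2(\Omega)}=0$. Applying Proposition \ref{kato} to $f(\cdot,t)-c$ for a.e.\ $t$, combined with the Sobolev chain rule $\partial_t g = \chi[f\gs c]\,\partial_t f$ a.e.\ and the slice-wise inequality from the previous paragraph, produces
\[
\La_{w(\cdot,t)}\, g(\cdot,t) \,-\, \partial_t g(\cdot,t)\cdot\mu_{w(\cdot,t)} \,\gs\, \chi[f\gs c]\cdot\bigl(\La_{w(\cdot,t)}f - \partial_t f\cdot\mu_{w(\cdot,t)}\bigr) \,\gs\, 0
\]
as signed Radon measures on $\Omega$ for a.e.\ $t\in(0,T)$.

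Lemma \ref{lemma4.3} is now applicable to $g$ (the hypotheses $\partial_t w\ls C$, $g(\cdot,t)\in H^1_0(\Omega)$, and vanishing initial trace are all in hand), so $g\ls 0$ a.e., hence $g\equiv 0$ on $\Omega_T$. But the definition of essential supremum forces $\{f>c\}$, and therefore $\{g>0\}$, to have positive $(\mu\times\mathcal L^1)$-measure, which is the sought contradiction. The second (pointwise) assertion follows by taking $\varepsilon=1/j$ in the first: Fubini produces a positive-$\mathcal L^1$-measure set of times $t_j$ whose slices have positive $\mu$-measure, and for a.e.\ such $t_j$ both $\La^{\rm ac}f(\cdot,t_j)$ and $\ip{\nabla f}{\nabla w}(\cdot,t_j)$ lie in $L^1_{\rm loc}(\Omega)$, so one may pick $x_j$ in the slice that is simultaneously an approximate continuity point of both. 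The principal technical hurdle I expect is the measurable slice-by-slice bookkeeping: one must apply Kato to each $t$-slice in a jointly measurable fashion and reassemble the slice-wise inequalities into the spacetime distributional form ingested by Lemma \ref{lemma4.3}, with the strict-maximum hypothesis playing the double role of putting $g(\cdot,t)$ into $H^1_0(\Omega)$ and supplying the vanishing initial trace.
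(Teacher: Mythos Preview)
Your proposal is correct and follows essentially the same route as the paper's own proof: argue by contradiction, translate the pointwise inequality on the near-maximum set into the weighted form $\La_w^{\rm ac}f-\partial_t f\gs 0$, apply Kato's inequality (Proposition~\ref{kato}) slice-by-slice to $(f-c)_+$, and then invoke the parabolic weak maximum principle (Lemma~\ref{lemma4.3}) to force $(f-c)_+\equiv 0$. Your write-up is in fact somewhat more explicit than the paper's (e.g., you spell out why $\varepsilon_0$ must be below the strict-maximum gap, and how Fubini yields the pointwise sequence), but the architecture is identical.
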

\begin{proof}
 We will argue by contradiction, which  is similar to the proof of Theorem \ref{max-elliptic}. Suppose the assertion fails for some small $\varepsilon_0>0$. Then, for almost all $(x,t)\in \{(y,s):\ f(y,s)\gs \sup_{\Omega_T}f-\varepsilon_0\},$ we have
 $$\La^{\rm ac}f(x,t)+\ip{\nabla f}{\nabla w}(x,t)-\frac{\partial}{\partial t}f(x,t)\gs \varepsilon_0.$$
 Thus, at such $(x,t)$,
\begin{equation*}
\begin{split}
\Big[\La^{\rm ac}_wf(&x,t) -\frac{\partial}{\partial t}f(x,t)\Big]\cdot\mu_w\\
&\gs \Big[\La^{\rm ac}f(x,t)+\ip{\nabla f}{\nabla w}(x,t)-\frac{\partial}{\partial t}f(x,t)\Big]\cdot e^w\cdot\mu\gs \varepsilon_0\cdot e^w\cdot\mu\gs0.
\end{split}
\end{equation*}
 The strictly maximal property of $f$ gives that $f_{\varepsilon_0}:=\big(f-(\sup_{\Omega_T}f-\varepsilon_0)\big)_+\in H^1(\Omega_T)$ with $ \lim_{t\to0}\|f_{\varepsilon_0}(\cdot,t)\|_{L^2(\Omega)}=0$ and, for almost all $t\in(0,T)$, that the functions $f_{\varepsilon_0}(\cdot,t)\in H^1_0(\Omega)$.   Notice that $\La^{\rm sing}_{w(\cdot,t)}f(\cdot,t)\gs 0$ by $\La^{\rm sing}f(\cdot,t)\gs 0$. By using the Kato's inequality, we have that, for almost every $t\in(0,T)$,
 \begin{equation*}
\begin{split}
\La_w\big(f-&(\sup_{\Omega_T}f-\varepsilon_0)\big)_+\gs \chi[ f\gs(\sup_{\Omega_T}f-\varepsilon_0)]\cdot  \La_w^{\rm ac}f\\
 &\gs  \chi[ f\gs(\sup_{\Omega_T}f-\varepsilon_0)]\cdot  \frac{\partial f}{\partial t}\cdot\mu_w=  \frac{\partial}{\partial t}\big(f-(\sup_{\Omega_T}f-\varepsilon_0)\big)_+\cdot\mu_w.
\end{split}
\end{equation*}
 Then  Lemma \ref{lemma4.3} implies that $\big(f-(\sup_{\Omega_T}f-\varepsilon_0)\big)_+=0$ for almost all $(x,t)\in\Omega_T$. This is a contradiction.
 \end{proof}

\section{Local Li-Yau's gradient estimates}

Let $K\in\mathbb R$ and $N\in [1,\infty)$ and let $(X,d,\mu)$  be  a metric measure space satisfying $RCD^*(K,N)$.  In this section, we will prove the local Li-Yau's gradient estimates--Theorem \ref{thm1.3}.

Let $\Omega\subset X$ be a domain. Given $T>0$, let us still denote
$$\Omega_{T}:=\Omega\times(0,T]$$
 the space-time domain, with lateral boundary $\Sigma$ and parabolic boundary $\partial_P\Omega_{T}:$
$$\Sigma:=\partial \Omega\times(0,T)\quad {\rm and}\quad \partial_P\Omega_{T}:=\Sigma\cup (\Omega \times\{0\}). $$
We adapt the following precise definition of \emph{locally weak solution} for the heat equation.
\begin{defn}\label{defn5.1}
 Let $T\in(0,\infty]$ and let $\Omega$  be a domain.  A function $u(x,t) $ is called a   \emph{locally weak  solution}  of  the heat equation on $\Omega_{T}$  if $u(x,t)  \in H^1(\Omega_T) \ (=W^{1,2}(\Omega_{T}))$
 and if for any  subinterval $[t_1,t_2]\subset(0,T)$ and any geodesic ball $B_R\subset\subset \Omega$,  it holds
\begin{equation}\label{eq5.1}
 \int^{t_2}_{t_1}\int_{B_R}\Big(\partial_t u\cdot\phi+\ip{\nabla u}{\nabla \phi}\Big)d\mu dt = 0
\end{equation}
 for all  test functions $\phi(x,t)\in  Lip_0(B_{R}\times(t_1,t_2)\big).$
Here and in the sequel, we denote always $\partial_tu:=\frac{\partial u}{\partial t}.$
\end{defn}
\begin{rem}\label{rem5.2}
The test functions $\phi$ in this definition can be chosen such that it has to vanish only on the lateral boundary $\partial B_R\times(0,T) $. That is, $\phi \in Lip(B_{R,T})$ with $\phi(\cdot,t)\in Lip_0(B_R)$ for all $t\in(0,T).$

\end{rem}

The local boundedness and the Harnack inequality for locally weak solutions of the heat equation have been established by  Sturm \cite{stu95,stu96} and Marola-Masson  \cite{mm13}. In particular, any locally weak solutions for the heat equation in Definition \ref{defn5.1} must be locally H\"older continuous.

Let $u(x,t)$ be a locally weak solution of  the heat equation on $\Omega\times(0,T).$   Fubini Theorem implies, for a.e. $t\in[0,T]$, that the function $u(\cdot,t)\in H^1(\Omega) $ and $\partial_tu\in L^2(\Omega)$. Hence,  for a.e. $t\in(0,T)$, the function $u(\cdot,t)$ satisfies, in the distributional sense,
\begin{equation}\label{eq5.2}
\La u=\partial_t u \quad {\rm on}\ \ \Omega.
\end{equation}
Conversely, if a  function $u(x,t)\in H^{1}\big(\Omega_T\big)$  and (\ref{eq5.2}) holds for a.e. $t\in[0,T]$, then it was shown
\cite[Lemma 6.12]{zz14} that $u(x,t)$ is a locally weak solution of  the  heat equation on $\Omega_T.$

In the case that $u(x,t)$ is a (globally) weak solution of  heat equation on $X\times(0,\infty)$ with initial value in $L^2(X)$, the theory of analytic semigroups asserts that the function $t\mapsto \|u\|_{W^{1,2}(X)}$ is analytic.  However, for a locally weak solution of the heat equation on $\Omega_{T}$, we have not sufficient regularity for  the time derivative $\partial_t u$: in general, $\partial_tu$ is only in $L^2$. This is not enough to use Bochner formula in Theorem \ref{theorem-boch}  to (\ref{eq5.2}). For overcoming this difficulty, we recall the so-called \emph{Steklov average}.
 \begin{defn}\label{steklov}
 Given a geodesic ball $B_R$ and  a function  $u(x,t)\in L^1(B_{R,T})$, where $B_{R,T}:=B_R\times(0,T)$,  the \emph{Steklov average} of $u$ is defined, for every $\varepsilon\in(0,T)$ and any $h\in  (0,\varepsilon)$, by
\begin{equation}\label{eq5.3}
u_h(x,t):=
\frac{1}{h}\int_0^{h}u(x,t+\tau)d\tau,\quad t\in(0,T-\varepsilon].
\end{equation}
\end{defn}
From the general theory of $L^p$ spaces, we know that
 if $u \in  L^p(B_{R,T})$,  then the Steklov average $u_h$ converges to $u$ in $L^p(B_{R,T-\varepsilon})$  as $h\to 0$, for  every $\varepsilon\in(0,T).$
\begin{lem}\label{lem5.4}
If  $u \in  H^{1}(B_{R,T})\cap L^\infty(B_{R,T})$,  then we have, for every   $\varepsilon\in(0,T) $, that
 $$u_h\in H^{1}(B_{R,T-\varepsilon})\cap L^\infty(B_{R,T-\varepsilon})\quad \ {\rm and}\ \quad
\partial_t u_h\in  H^{1}(B_{R,T-\varepsilon})\cap L^\infty(B_{R,T-\varepsilon})$$
 for  every
$h\in(0,\varepsilon),$ and that $\|u_h\|_{H^{1}(B_{R,T-\varepsilon})}$ is bounded uniformly with respect to $h\in(0,\varepsilon)$.
\end{lem}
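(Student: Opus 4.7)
\medskip

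\noindent\emph{Plan of proof for Lemma \ref{lem5.4}.} The strategy is to verify each regularity claim directly from the defining formula \eqref{eq5.3}, interchanging the time-averaging integral with the spatial gradient and with the time derivative, and using Fubini's theorem together with the standard fact that functions in $H^{1}(B_{R,T})$ admit, for almost every $x\in B_R$, an absolutely continuous representative in $t$ whose distributional derivative coincides with $\partial_t u(x,\cdot)$ (and symmetrically in the spatial variable). Throughout, I would work with a Lipschitz approximating sequence $u^{(k)}\to u$ in $H^1(B_{R,T})$ to transfer pointwise identities (Newton--Leibniz, Fubini) from the smooth setting, then pass to the limit; all identities below are to be read in the sense of $L^2(B_{R,T-\varepsilon})$.

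\medskip

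First I would dispatch the $L^\infty$ bounds. From \eqref{eq5.3} one gets $\|u_h\|_{L^\infty(B_{R,T-\varepsilon})}\leqslant \|u\|_{L^\infty(B_{R,T})}$ directly. Next I would establish the key identity
\begin{equation*}
\partial_t u_h(x,t)\;=\;\frac{u(x,t+h)-u(x,t)}{h}
\end{equation*}
in the distributional (and hence $L^2$) sense on $B_{R,T-\varepsilon}$. To verify it, pair both sides against $\phi\in Lip_0(B_{R,T-\varepsilon})$, invoke Fubini to move the $\tau$-integral outside, and apply the fundamental theorem of calculus to the translated test function $\phi(\cdot,t-\tau)$; the argument is routine once the Lipschitz approximation is in place. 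From this formula and $u\in L^\infty$, the bound $\|\partial_t u_h\|_{L^\infty(B_{R,T-\varepsilon})}\leqslant 2\|u\|_{L^\infty(B_{R,T})}/h$ is immediate.

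\medskip

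For the spatial piece, the corresponding identity is
\begin{equation*}
\nabla u_h(x,t)\;=\;\frac{1}{h}\int_0^h \nabla u(x,t+\tau)\,d\tau,
\end{equation*}
proved the same way by testing against spatial Lipschitz functions and using Fubini. Jensen's inequality and Fubini then yield the uniform bound
\begin{equation*}
\int_0^{T-\varepsilon}\!\!\int_{B_R}\!|\nabla u_h|^2\,d\mu\,dt
\;\leqslant\;\frac{1}{h}\int_0^h\!\!\int_0^{T-\varepsilon}\!\!\int_{B_R}\!|\nabla u(x,t+\tau)|^2\,d\mu\,dt\,d\tau
\;\leqslant\;\int_0^{T}\!\!\int_{B_R}\!|\nabla u|^2\,d\mu\,dt,
\end{equation*}
independent of $h$. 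For the time derivative, writing $u(x,t+h)-u(x,t)=\int_t^{t+h}\partial_\tau u(x,\tau)\,d\tau$ (valid for a.e.\ $x$ by Fubini and the $H^1$ hypothesis) and applying Cauchy--Schwarz in $\tau$ gives
\begin{equation*}
|\partial_t u_h(x,t)|^2\;\leqslant\;\frac{1}{h}\int_t^{t+h}|\partial_\tau u(x,\tau)|^2\,d\tau,
\end{equation*}
whose integral over $B_{R,T-\varepsilon}$ is bounded by $\|\partial_t u\|_{L^2(B_{R,T})}^2$, again uniformly in $h$. Combined with $\|u_h\|_{L^2}\leqslant \|u\|_{L^2}$, this gives the uniform $H^{1}$ bound on $u_h$.

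\medskip

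It remains to show $\partial_t u_h\in H^{1}(B_{R,T-\varepsilon})\cap L^\infty(B_{R,T-\varepsilon})$. Having proved $\partial_t u_h=\frac{1}{h}\bigl(u(\cdot,\cdot+h)-u(\cdot,\cdot)\bigr)$, this reduces to the analogous regularity of the two translates $u(\cdot,\cdot+h)$ and $u(\cdot,\cdot)$ on $B_{R,T-\varepsilon}$, which follows from $u\in H^{1}(B_{R,T})\cap L^\infty(B_{R,T})$ by a shift-invariance argument (translation in time by $h<\varepsilon$ leaves us inside $B_{R,T}$). The resulting bounds on $\partial_t u_h$ will inevitably carry a factor $1/h$, but that is consistent with the statement, which only asks for membership in these spaces for each fixed $h$, not uniformity. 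The main (and really the only) obstacle in the whole argument is the rigorous verification of the two identities for $\partial_t u_h$ and $\nabla u_h$ in the weak sense on the metric measure space $B_R\times(0,T)$; once those are in hand, every remaining step is a direct application of Jensen, Cauchy--Schwarz, and Fubini.
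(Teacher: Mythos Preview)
Your proposal is correct and would lead to a complete proof, but it differs from the paper's route. The paper works entirely through Haj\l asz gradients on the product space $B_{R,T}$: since $u\in H^{1}(B_{R,T})$, there is $g\in L^{2}(B_{R,T})$ satisfying the Haj\l asz inequality
\[
|u(x,t)-u(y,s)|\;\leqslant\; d_P\bigl((x,t),(y,s)\bigr)\,\bigl(g(x,t)+g(y,s)\bigr)
\]
for a.e.\ pairs; averaging both sides over $\tau\in[0,h]$ immediately shows that $g_h$ is a Haj\l asz gradient of $u_h$ on $B_{R,T-\varepsilon}$, whence $2g_h$ is a $2$-weak upper gradient and $u_h\in H^{1}(B_{R,T-\varepsilon})$ with norm controlled by $\|g_h\|_{L^{2}}\to\|g\|_{L^{2}}$. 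The $L^\infty$ assertion and the difference-quotient formula for $\partial_t u_h$ are handled exactly as you do.

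The trade-off is this: the paper's argument treats the full space-time gradient in one stroke via a pointwise two-point inequality that commutes transparently with time-averaging, so no differential calculus on the metric space is invoked and no separation into spatial and temporal components is needed. Your approach instead splits into $\nabla_x$ and $\partial_t$ and aims for the commutation $\nabla_x u_h=(\nabla_x u)_h$. One caution: on a metric measure space there is no vector-valued $\nabla u$, so that identity must be read either in Gigli's tangent-module sense or, more modestly, as the scalar inequality $|\nabla u_h|\leqslant(|\nabla u|)_h$ obtained from your Lipschitz approximation (for Lipschitz $v$ one has ${\rm Lip}\,v_h\leqslant({\rm Lip}\,v)_h$ pointwise, and this survives the limit). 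With that reading your Jensen/Fubini chain goes through and even yields a slightly sharper constant than the Haj\l asz route. In short, the paper's approach is shorter and structure-free; yours gives finer information at the cost of the extra justification just described.
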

\begin{proof}
Since $u \in H^{1}(B_{R,T})$, according to \cite{hk00},
there exists a function $g(x,t)\in L^2(B_{R,T})$ such that
$$|u(x,t)-u(y,s)|\ls d_P\big((x,t),(y,s)\big)\cdot \Big(g(x,t)+g(y,s)\Big),$$
for almost all $(x,t), (y,s)\in B_{R,T}$ with respect to the product measure $d\mu\times dt$,  where $d_P$ is the product metric on $B_{R,T}$ defined by
$$d_P^2\big((x,t),(y,s)\big):=d^2(x,y)+|t-s|^2.$$ Such a function $g$ is called a Haj{\l}asz-gradient of $u$ on $B_{R,T}$ (see \cite[\S8]{haj03}).
By the definition of the Steklov average $u_h$, we have
\begin{equation*}
\begin{split}
|u_h(x,t)-u_h(y,s)|&\ls \frac{1}{h}\int_0^h\Big(g(x,t+\tau)+g(y,s+\tau)\Big)\cdot d_P\big((x,t+\tau),(y,s+\tau)\big)d\tau\\
& = \frac{1}{h}\int_0^h\Big(g(x,t+\tau)+g(y,s+\tau)\Big)d\tau\cdot d_P\big((x,t),(y,s)\big)\\
& = \Big(g_h(x,t)+g_h(y,s)\Big) \cdot d_P\big((x,t),(y,s)\big)
\end{split}
\end{equation*}
for almost all $(x,t), (y,s)\in B_{R,T}$. The fact $g(x,t)\in L^2(B_{R,T})$ implies that $g_h(x,t)\in L^2(B_{R,T-\varepsilon})$ for each $h\in(0,\varepsilon)$ and
that the functions $g_h$ converges to $g$ in $L^2(B_{R,T-\varepsilon})$ as $h\to0$. Then the previous inequality implies that $g_h$ is a Haj{\l}asz-gradient of $u_h$ on $B_{R, T-\varepsilon}$ for all $h\in(0,\varepsilon)$ (see \cite{haj03}). According to \cite[Theorem 8.6]{haj03}, $2g_h$ is a 2-weak
upper gradient of $u_h$.  Thus we conclude that
$u_h\in W^{1,2}(B_{R,T-\varepsilon})$ and
$$\limsup_{h\to0}\int_{B_{R,T-\varepsilon}}(|\nabla u_h|^2+|\partial_t u_h|^2)d\mu dt\ls \limsup_{h\to0}\int_{B_{R,T-\varepsilon}}(2g_h)^2d\mu dt\ls 4\int_{B_{R,T-\varepsilon}}g^2d\mu dt.$$
Therefore, we get that $\|u_h\|_{H^{1}(B_{R,T-\varepsilon})}$ is bounded uniformly with respect to $h\in(0,\varepsilon)$ (by combining with $u_h\to u$ in $L^2(B_{R,T-\varepsilon})$ as $h\to0$).

Lastly,
the assertion $u_h\in L^\infty(B_{R,T-\varepsilon})$ follows directly from the definition of $u_h$ and $u\in L^\infty(B_{R,T})$.   The assertion of $\partial_tu$ follows from that
$$\partial_tu_h=\frac{u(x,t+h)-u(x,t)}{h}.$$
 The proof is completed.
\end{proof}

For a locally weak solution $u$ for the heat equation, we have the following property of $u_h$.
\begin{lem}\label{lem5.6}
Let $u \in  H^{1}(B_{R,T})\cap L^\infty(B_{R,T})$ be a locally weak solution for the heat equation, and fix any two constants $\varepsilon,h$ such that $\varepsilon\in(0,T) $ and
$h\in(0,\varepsilon)$. Then for almost all $t\in (0,T-\varepsilon)$
$$\La u_h=\partial_t  u_h$$
  on $ B_R$, in the  sense of distributions.
\end{lem}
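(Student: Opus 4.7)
The plan is to derive the identity by integrating the distributional form of the heat equation~(\ref{eq5.2}) over the Steklov averaging window $(t,t+h)$ and then commuting the spatial gradient with the time-average. The starting point is that, because $u$ is a locally weak solution on $B_{R,T}$, Fubini applied to~(\ref{eq5.1}) yields~(\ref{eq5.2}): for almost every $s\in(0,T)$ and every $\phi\in H^1_0(B_R)\cap L^\infty(B_R)$,
\begin{equation*}
\int_{B_R}\partial_s u(x,s)\,\phi(x)\,d\mu = -\int_{B_R}\ip{\nabla u(\cdot,s)}{\nabla\phi}\,d\mu.
\end{equation*}

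First I would fix $t\in(0,T-\varepsilon)$ and $\phi\in H^1_0(B_R)\cap L^\infty(B_R)$, integrate the identity above in $s$ over $(t,t+h)$, divide by $h$, and apply Fubini (justified since $\partial_s u\in L^2(B_{R,T})$, $\phi\in L^\infty$, and $\ip{\nabla u}{\nabla\phi}\in L^1(B_{R,T})$). The left-hand side becomes
\begin{equation*}
\frac{1}{h}\int_{B_R}\Big(\int_t^{t+h}\partial_s u(x,s)\,ds\Big)\phi(x)\,d\mu = \int_{B_R}\frac{u(x,t+h)-u(x,t)}{h}\,\phi\,d\mu = \int_{B_R}\partial_t u_h\cdot\phi\,d\mu,
\end{equation*}
using that $u\in H^1(B_{R,T})$ makes $s\mapsto u(x,s)$ absolutely continuous for $\mu$-a.e.\ $x$, so that the fundamental theorem of calculus applies.

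Next I would handle the right-hand side by commuting the gradient with the time-average. By Lemma~\ref{lem5.4}, $u_h(\cdot,t)\in H^1(B_R)$, and moreover $u_h(\cdot,t)$ is naturally realized as the Bochner integral $\frac{1}{h}\int_0^h u(\cdot,t+\tau)\,d\tau$ valued in the Hilbert space $H^1(B_R)$. Infinitesimal Hilbertianity, available under $RCD^*(K,N)$, makes $f\mapsto \int_{B_R}\ip{\nabla f}{\nabla\phi}\,d\mu$ a bounded linear functional on $H^1(B_R)$, so it commutes with the Bochner integral and gives
\begin{equation*}
\int_{B_R}\ip{\nabla u_h(\cdot,t)}{\nabla\phi}\,d\mu = \frac{1}{h}\int_0^h\int_{B_R}\ip{\nabla u(\cdot,t+\tau)}{\nabla\phi}\,d\mu\,d\tau.
\end{equation*}
Combined with the previous calculation, this yields $\La u_h(\cdot,t)(\phi) = \int_{B_R}\partial_t u_h\cdot\phi\,d\mu$ for every admissible $\phi$, which is exactly the distributional equality $\La u_h=\partial_t u_h$ on $B_R$.

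The only delicate point is the commutation between the spatial gradient and the Steklov time-average: in a non-smooth setting a gradient need not interchange with arbitrary integrations. What rescues us is two-fold, both furnished by $RCD^*(K,N)$: Lemma~\ref{lem5.4} supplies uniform $H^1$-bounds for $u_h$, making the Bochner-integral interpretation rigorous, and infinitesimal Hilbertianity promotes $\ip{\nabla \cdot}{\nabla\phi}$ to a genuine bounded bilinear form on $H^1(B_R)$, so that the $\tau$-integration passes cleanly through. All other steps reduce to Fubini and the standard one-variable absolute continuity of $H^1$-functions.
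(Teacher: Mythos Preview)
Your argument is correct and follows the standard direct route: integrate the time-slice identity $\La u(\cdot,s)=\partial_s u(\cdot,s)$ over $s\in(t,t+h)$ and use that the bounded linear map $f\mapsto\int_{B_R}\ip{\nabla f}{\nabla\phi}\,d\mu$ commutes with the $H^1(B_R)$-valued Bochner integral defining $u_h(\cdot,t)$. The paper does not spell this out; its proof is a one-line sketch proposing instead to first verify the identity for locally Lipschitz $u$ (where the commutation $\nabla(u_h)=(\nabla u)_h$ is pointwise) and then pass to the limit by density. Your approach is more direct but leans on the Bochner-integral machinery, which in this non-smooth setting requires knowing that $s\mapsto u(\cdot,s)$ is strongly measurable into the separable Hilbert space $H^1(B_R)$ with $\int\|u(\cdot,s)\|_{H^1}^2\,ds<\infty$; this is exactly what the paper's Fubini remark before~(\ref{eq5.2}) provides, so your justification is sound. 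One small correction: invoking Lemma~\ref{lem5.4} for ``uniform $H^1$-bounds on $u_h$'' is not what makes the Bochner integral rigorous---what you need is the $L^2((0,T);H^1(B_R))$ integrability of $u$ itself, and then the identification of the pointwise average with the Bochner average follows by testing against $L^2$ functions. The approximation route in the paper trades this functional-analytic step for an elementary density argument; either is acceptable for a result the authors themselves call standard.
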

\begin{proof}
The proof is standard. In fact, one can show the assertion for locally Lipschitz function $u$, and then use an approximating argument to prove the lemma.
\end{proof}

With the aid of the above two lemmas, we will consider firstly the case when a locally weak solution $u\in H^{1}(B_{R,T})\cap L^{\infty}(B_{R,T})$ with
$\partial_t u\in H^{1}(B_{R,T})\cap L^{\infty}(B_{R,T})$.
\begin{lem}\label{lemma5.7}
 Given $K\in \mathbb R$ and $N\in[1,\infty)$, let $(X,d,\mu)$ be a metric measure space satisfying $RCD^*(K,N)$.
Let $u(x,t)\in H^{1}(B_{2R,T})\cap L^\infty(B_{2R,T})$ be a  locally weak  solution of the heat equation on $B_{2R,T}$.
Assume that  $\partial_tu\in  H^{1}(B_{2R,T})\cap L^{\infty}(B_{2R,T})$.
Then we have $|\nabla u|^2\in H^1(B_{R,T})\cap L^\infty(B_{R,T}).$
\end{lem}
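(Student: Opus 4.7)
The plan is to work slicewise in $t$, then assemble space-time regularity. For almost every $t\in(0,T)$, Fubini combined with identity \eqref{eq5.2} gives $u(\cdot,t)\in H^1(B_{2R})\cap L^\infty(B_{2R})$ with $\La u(\cdot,t)=\partial_t u(\cdot,t)$ on $B_{2R}$ in the distributional sense, and by the additional hypothesis $\partial_t u(\cdot,t)\in H^1(B_{2R})\cap L^\infty(B_{2R})$ for a.e.\ such $t$. Applying Theorem \ref{theorem-boch} to $u(\cdot,t)$ with $g=\partial_t u(\cdot,t)$ (renaming the radius) yields $|\nabla u(\cdot,t)|^2\in H^1(B_R)\cap L^\infty(B_R)$ for a.e.\ $t$.

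\textbf{The $L^\infty$ bound.} Lemma \ref{lip} gives the slicewise estimate
\[\||\nabla u(\cdot,t)|\|_{L^\infty(B_R)}\le C(N,K,R)\Big(\mu(B_{2R})^{-1}\|u(\cdot,t)\|_{L^1(B_{2R})}+\|\partial_t u(\cdot,t)\|_{L^\infty(B_{2R})}\Big).\]
Both $u$ and $\partial_t u$ are in $L^\infty(B_{2R,T})$, so the right-hand side is bounded uniformly in $t$, and $|\nabla u|^2\in L^\infty(B_{R,T})$.

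\textbf{Time derivative.} I use difference quotients $D_h u(\cdot,t):=(u(\cdot,t+h)-u(\cdot,t))/h$. Bilinearity of the inner product yields
\[\frac{|\nabla u(\cdot,t+h)|^2-|\nabla u(\cdot,t)|^2}{h} = \bigl\langle \nabla u(\cdot,t+h)+\nabla u(\cdot,t),\,\nabla D_h u(\cdot,t)\bigr\rangle.\]
Since $\partial_t u\in H^1(B_{2R,T})$, one has $\nabla D_h u\to\nabla\partial_t u$ in $L^2(B_{R,T-\varepsilon})$ as $h\to 0$; combined with $\nabla u(\cdot,t+h)\to\nabla u(\cdot,t)$ in $L^2$ (from $u\in H^1$), the difference quotients converge in $L^1(B_{R,T-\varepsilon})$ to $2\langle\nabla u,\nabla\partial_t u\rangle$. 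This limit lies in $L^2(B_{R,T})$ because $\nabla u\in L^\infty$ and $\nabla\partial_t u\in L^2$, so it is the weak time derivative $\partial_t(|\nabla u|^2)\in L^2(B_{R,T})$.

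\textbf{Spatial derivative (the main obstacle).} The delicate step is to show $\nabla_x|\nabla u|^2\in L^2(B_{R,T})$; the slicewise Bochner formula gives pointwise-in-$t$ membership in $H^1$, but a quantitative estimate that is $L^1(0,T)$-integrable in its right-hand side is needed. Fix a Lipschitz cutoff $\eta\in Lip_0(B_{2R})$ with $\eta\equiv 1$ on $B_R$ and $|\nabla\eta|\le C/R$. For a.e.\ $t$, the function $\phi:=\eta^2\,|\nabla u(\cdot,t)|^2$ belongs to $H^1_0(B_{2R})\cap L^\infty$ and so is admissible in the Bochner inequality of Theorem \ref{theorem-boch} (applied with an initial radius $4R$ to make $B_{2R}$ the inner ball). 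Expanding $\nabla\phi$ by the Leibniz rule, applying Cauchy-Schwarz, and absorbing the quadratic term $\tfrac12\int\eta^2|\nabla|\nabla u|^2|^2 d\mu$ produces a Caccioppoli-type slice estimate
\[\int_{B_R}\bigl|\nabla|\nabla u(\cdot,t)|^2\bigr|^2 d\mu \ \le\ C(N,K,R)\cdot\Phi(t),\]
where $\Phi(t)$ depends polynomially on $\||\nabla u(\cdot,t)|\|_{L^\infty(B_{2R})}$, $\|\partial_t u(\cdot,t)\|_{L^\infty(B_{2R})}$, and $\|\nabla\partial_t u(\cdot,t)\|_{L^2(B_{2R})}$. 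The first two are uniformly bounded in $t$ by Step 1 and Lemma \ref{lip}, while $\|\nabla\partial_t u(\cdot,t)\|_{L^2(B_{2R})}\in L^2(0,T)$ by the hypothesis $\partial_t u\in H^1(B_{2R,T})$. Hence $\int_0^T\Phi(t)\,dt<\infty$ and $\nabla|\nabla u|^2\in L^2(B_{R,T})$. Together with the $L^\infty$ bound and the $L^2$ time derivative from the previous step, this gives $|\nabla u|^2\in H^1(B_{R,T})\cap L^\infty(B_{R,T})$, as required.
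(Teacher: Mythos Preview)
Your proof is correct and follows essentially the same approach as the paper: Lemma~\ref{lip} for the $L^\infty$ bound, the Bochner inequality together with a Caccioppoli argument for the spatial gradient, and the identity $\partial_t|\nabla u|^2 = 2\langle\nabla u,\nabla\partial_t u\rangle$ (which you justify carefully via difference quotients, whereas the paper simply asserts it) for the time derivative. One minor slip: you cannot invoke Theorem~\ref{theorem-boch} ``with initial radius $4R$'' since $u$ is only given on $B_{2R}$; instead apply it on $B_{2R}$ so that the Bochner inequality holds on $B_R$, take $\eta\in Lip_0(B_R)$, and obtain the estimate on a slightly smaller ball --- the paper handles the same issue by passing through the intermediate radius $3R/2$.
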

\begin{proof}
Notice that, for almost all $t\in(0,T)$,  we have  $u(\cdot,t),\partial_tu(\cdot,t)\in H^{1}(B_{2R})\cap L^\infty(B_{2R})$ and  that $\La u=\partial_tu$ on $B_{2R}$. By Lemma \ref{lip}, we get
$$\||\nabla u(\cdot,t)|\|_{L^\infty(B_{3R/2})}\ls C(N,K,R)\cdot (|u(\cdot,t)|_{L^\infty(B_{2R})}+|\partial_t u(\cdot,t)|_{L^\infty(B_{2R})}).$$
This implies $|\nabla u|^2\in L^\infty(B_{3R/2,T})$ and
$$\||\nabla u(\cdot,\cdot)|\|_{L^\infty(B_{3R/2,T})}\ls C(N,K,R)\cdot (|u|_{L^\infty(B_{2R,T})}+|\partial_t u|_{L^\infty(B_{2R,T})}): =C_*.$$

On the other hand,  for almost all $t\in(0,T)$, by applying the Bochner formula  (\ref{eq3.5}) to  $\La u=\partial_tu$  on $B_{2R}$, we  conclude that  $|\nabla u(\cdot,t)|^2\in H^{1}(B_{3R/2})\cap L^\infty(B_{3R/2})$ and
  \begin{equation*}
  \begin{split}
  \La( |\nabla u|^2)&\gs \Big[ 2\frac{(\partial_t u)^2}{N}+2\ip{\nabla u}{\nabla  \partial_tu}+2K|\nabla u|^2\Big]\cdot\mu\\
  &\gs -2|\nabla u|\cdot|\nabla  \partial_tu|\cdot\mu+2K|\nabla u|^2\cdot\mu\ \gs -2\Big[C_*\cdot|\nabla  \partial_tu|+2|K|C^2_*\Big]\cdot\mu,
  \end{split}
  \end{equation*}
 on $ B_{3R/2}$ in the sense of distributions. By using the  Caccioppoli inequality, we conclude that, for almost all $t\in (0,T)$,
 $$\||\nabla|\nabla u|^2(\cdot,t)|\|_{L^2(B_R)}\ls C_{N,K,R}\cdot\big (2C_*\cdot \||\nabla  \partial_tu|\|_{L^2(B_{3R/2})}+2|K|\cdot C^2_* +\||\nabla u|^2\|_{L^2(B_{3R/2})}\big).$$
 The integration on $(0,T)$ implies that
 $$\||\nabla |\nabla u|^2 \|_{L^2(B_{R,T})}\ls C_{**}\cdot\big (  \||\nabla  \partial_tu|\|_{L^2(B_{3R/2},T)}  +\||\nabla u|^2\|_{L^2(B_{3R/2,T})}+1\big),$$
for the constants $C_{**}$ depending on $N,K,R,T $ and $C_*$.  Thus, $|\nabla |\nabla u|^2| \in L^2(B_{R,T})$.

  Lastly, noting that, for almost all $(x,t)\in B_{R,T}$,
$$|\partial_t|\nabla u|^2|^2 =|\partial_t\ip{\nabla u}{\nabla u}|^2=|2\ip{\nabla\partial_t u}{\nabla u}|^2\ls 4 |\nabla\partial_t u|^2\cdot|\nabla u |^2.$$
Then, by using $|\nabla u|^2\in L^\infty(B_{3R/2,T})$ and $\partial_tu\in H^1(B_{R,T})$, we get $|\partial_t|\nabla u|^2|\in L^2(B_{3R/2,T}).$
By combining with  $|\nabla |\nabla u|^2| \in L^2(B_{R,T})$, we conclude $|\nabla u|^2\in H^1(B_{R,T})$. Now we finish the proof.
\end{proof}
\begin{lem}\label{lem5.8}
 Given $K\gs0$ and $N\in[1,\infty)$, let $(X,d,\mu)$ be a metric measure space satisfying $RCD^*(-K,N)$.
Let $u(x,t)\in H^{1}(B_{2R,T})\cap L^\infty(B_{2R,T})$ be the  locally weak  solution of the heat equation on $B_{2R,T}$.
Assume that $u\gs\delta>0$ and $\partial_tu\in  H^{1}(B_{2R,T})\cap L^{\infty}(B_{2R,T})$.
We put
$$F(x,t)=t\cdot[|\nabla f|^2-\alpha\cdot \partial_t f]_+,$$
where $f=\log u$ and $\alpha>1$. Then, we have
$$\frac{F}{t}\in H^{1} (B_{R,T} )\cap L^{\infty}(B_{R,T} ),$$
and that, for almost every $t\in(0,T)$, the function $F(\cdot,t)$ satisfies
\begin{equation}\label{eq5.5}
\La F-\partial_t F\cdot\mu \gs -2\ip{\nabla f}{\nabla F}\cdot\mu-\frac{F}{t} \cdot\mu + 2t\Big[\frac{1}{N}\big(|\nabla f|^2-\partial_tf)^2-K|\nabla f|^2\Big] \cdot\mu\
\end{equation}
on $ B_{R}$, in the sense of distributions.
\end{lem}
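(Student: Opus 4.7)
My plan is to adapt the classical Li--Yau argument to the $RCD^*(-K,N)$ setting by combining the distributional Bochner formula of Corollary~\ref{lem6.1} with Kato's inequality (Proposition~\ref{kato}) and a careful integration by parts in time that lets us differentiate the heat equation.

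First I would establish the regularity claim. Since $u\gs\delta>0$ and $u,\partial_t u\in H^1(B_{2R,T})\cap L^\infty$, the chain and Leibniz rules (Lemma~\ref{lem3.2}) give $f=\log u$, $1/u$ and $\partial_t f=\partial_t u/u$ in $H^1\cap L^\infty$. Applying Lemma~\ref{lemma5.7} slice-by-slice to $\La u=\partial_t u$ and integrating in $t$ yields $|\nabla u|^2\in H^1(B_{R,T})\cap L^\infty$; dividing by $u^2$ transfers this to $|\nabla f|^2$. Hence $H:=|\nabla f|^2-\alpha\partial_t f$ and its positive part $H_+=F/t$ belong to $H^1(B_{R,T})\cap L^\infty$.

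For the differential inequality, I begin from the identity $\La f=\partial_t f-|\nabla f|^2=:g$ on $B_{2R}$ (a.e.\ $t$), obtained from the chain rule and the heat equation, with $g\in H^1(B_{2R})\cap L^\infty$. Corollary~\ref{lem6.1} then provides the distributional Bochner inequality on $B_R$ with non-negative singular part,
\[
\La|\nabla f|^2 \gs \Big[\tfrac{2g^2}{N}+2\ip{\nabla f}{\nabla g}-2K|\nabla f|^2\Big]\mu,
\]
and substituting $\nabla g=\nabla\partial_t f-\nabla|\nabla f|^2$ together with the identity $2\ip{\nabla f}{\nabla\partial_t f}=\partial_t|\nabla f|^2$ (valid in $L^1_{\rm loc}$ by the Leibniz rule for the inner product applied to $f,\partial_t f\in H^1$) recasts this as a parabolic-type inequality for $|\nabla f|^2$. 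To incorporate the remaining $-\alpha\partial_t f$ component of $H$, I would show that $\partial_t u$ is itself a locally weak heat solution: testing the weak heat equation against $\partial_t\psi$ and integrating by parts in time (legal because $\partial_t u\in H^1(B_{2R,T})$ forces $\partial_t^2 u\in L^2$) gives the weak heat equation for $\partial_t u$, hence $\La\partial_t u=\partial_t^2 u$ distributionally. Applying the Leibniz and chain rules to $\partial_t f=\partial_t u/u$ then yields the $L^2_{\rm loc}$ identity $\La\partial_t f=\partial_t^2 f-\partial_t|\nabla f|^2$, and combining with the parabolic Bochner produces, for a.e.\ $t$,
\[
\La H-\partial_t H\cdot\mu\gs \Big[\tfrac{2g^2}{N}-2\ip{\nabla f}{\nabla H}-2K|\nabla f|^2\Big]\mu
\]
on $B_R$ in distributional sense.

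Finally, I would multiply by $t>0$, use $\nabla F=t\chi_{\{H>0\}}\nabla H$ and $\partial_t F=H_+ + t\chi_{\{H>0\}}\partial_t H$ a.e., and invoke Kato's inequality (Proposition~\ref{kato}) in the form $\La H_+\gs\chi_{\{H\gs0\}}\La H$; algebraic rearrangement then delivers the target~\eqref{eq5.5}. The main technical obstacle is the computation of $\La\partial_t f$, which requires first showing $\partial_t u$ is a locally weak heat solution (via integration by parts in time in the weak formulation) and then carrying out the Leibniz/chain algebra so that the $\alpha\partial_t^2 f$ terms cancel exactly against $\partial_t H=\partial_t|\nabla f|^2-\alpha\partial_t^2 f$. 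A secondary subtlety is that Kato's inequality naturally yields a $\chi_{\{H\gs0\}}$ factor on the right-hand side while~\eqref{eq5.5} carries no such indicator; reconciling this uses that $F$ vanishes on $\{H\ls0\}$ together with the non-negative singular parts contributed both by Kato and by $\La^{\rm sing}|\nabla f|^2\gs0$.
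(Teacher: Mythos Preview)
Your proposal is correct and follows essentially the same route as the paper: the paper also derives $\La(\partial_t f)=\partial_{tt}f-2\ip{\nabla f}{\nabla\partial_t f}$ from $\La\partial_t u=\partial_{tt}u$, applies the Bochner formula (\ref{eq3.5}) to $\La f=\partial_t f-|\nabla f|^2$, combines the two (organised as $F_1=t\,\partial_t f$ and $F_2=t\,|\nabla f|^2$ rather than your single $H$), and finishes by applying Kato's inequality to $\widetilde F_+$. The indicator subtlety you flag is real and the paper simply passes over it in the last line of its proof; as you note, it is harmless for the subsequent application since the maximum principle is invoked only where $F>0$.
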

\begin{proof}
From Lemma \ref{lemma5.7}, we have  $|\nabla u|^2\in H^{1}(B_{3R/2,T})\cap L^\infty(B_{3R/2,T})$.
 By combining with that $\partial_t u\in L^\infty (B_{2R,T})\cap H^{1}(B_{2R,T})$ and that $u\gs\delta>0$, we get that
$$ |\nabla f|^2-\alpha\partial_tf = \frac{|\nabla u|^2}{u^2} -\alpha\frac{\partial_tu}{u}  \in H^{1}(B_{3R/2,T})\cap L^\infty(B_{3R/2,T}).$$
 This implies  $ F/t= [|\nabla f|^2-\alpha\partial_tf]_+ \in H^{1}(B_{3R/2,T})\cap L^\infty(B_{3R/2,T})$ and  proves the first assertion.

By $\partial_tu\in  H^{1}(B_{2R,T})$, we see that $\partial_{tt}u\in L^2(B_{2R,T})$ and that, for almost all $t\in (0,T)$,
$$\La(\partial_t u)=\partial_{tt}u$$
in the sense of distributions.
Since  $u,\partial_tu\in H^{1}(B_{2R,T})\cap L^\infty(B_{2R,T})$ and $u\gs\delta>0$,
by using the chain rule in Lemma \ref{lem3.2}(i) to both $u$ and $\partial_tu$, we have, for almost all $t\in (0,T)$, that the functions $  f(\cdot,t),\partial_tf(\cdot,t)\in H^{1}(B_{2R})$ and
\begin{equation}\label{eq5.6}
\La f=\partial_tf-|\nabla f|^2,\qquad \La(\partial_tf)= \partial_{tt} f-2\ip{\nabla f}{\nabla \partial_tf}
\end{equation}
on $B_{2R}$ in the sense of distributions.

Consider $F_1(x,t):=t\cdot\partial_tf$. We have, for almost all $t\in(0,T)$, the function $F_1(\cdot, t)\in H^{1}(B_{2R})$ with
$$\La F_1=t\La \partial_tf=t\cdot\big(\partial_{tt}f-2\ip{\nabla f}{\nabla \partial_tf}\big).$$
Noting that
$$\partial_tF_1=\partial_tf+t\partial_{tt}f\quad{\rm and}\quad \ip{\nabla f}{\nabla F_1}=t\ip{\nabla f}{\nabla \partial_tf},$$
we conclude that
\begin{equation}\label{eq5.7}
\La F_1-\partial_tF_1=-2\ip{\nabla f}{\nabla F_1}-\frac{F_1}{t}
\end{equation}
on $B_{2R}$ in the sense of distributions.

Consider $F_2:=t|\nabla f|^2$. Recall that, for almost all $t\in(0,T)$, the function $  f(\cdot,t)\in H^{1}(B_{2R})$ and
$$\partial_tf-|\nabla f|^2=\frac{\partial_tu}{u}-\frac{|\nabla u|^2}{u^2}\in L^\infty(B_{3R/2})\cap H^{1}(B_{3R/2}).$$
 Recalling that $(X,d,\mu)$ satisfies $RCD^*(-K,N)$, we can apply the Bochner formula (\ref{eq3.5}) to $\La f=\partial_tf-|\nabla f|^2$ to conclude that  $|\nabla f|^2\in H^{1}(B_{R})$ and
\begin{equation*}
\La(|\nabla f|^2)\gs 2\Big[\frac{1}{N}\big(\partial_tf-|\nabla f|^2\big)^2+\ip{\nabla f}{\nabla\big(\partial_tf-|\nabla f|^2\big)}
-K|\nabla f|^2\Big]\cdot\mu
\end{equation*}
on $B_{R}$, in the sense of distributions.
Therefore,   for almost all $t\in(0,T)$, we get the function $F_2(\cdot,t)$ satisfies
\begin{equation}\label{eq5.8}
\La F_2-\partial_t F_2\cdot\mu  \gs 2t\cdot\Big[\frac{1}{N}\big(\partial_tf-|\nabla f|^2\big)^2
-K|\nabla f|^2\Big]\cdot\mu -2\ip{\nabla f}{\nabla F_2}\cdot\mu-\frac{F_2}{t}\cdot\mu
\end{equation}
on $B_{R}$, in the sense of distributions.
By combining (\ref{eq5.7}) and (\ref{eq5.8}), we conclude, for almost all $t\in(0,T)$,
that  we have, for $\widetilde{F}:=F_2-\alpha\cdot F_1$,
$$\La \widetilde{F}-\partial_t \widetilde{F} \cdot\mu\gs -2\ip{\nabla f}{\nabla \widetilde{F}}\cdot\mu-\frac{\widetilde{F}}{t} \cdot\mu + 2t\Big[\frac{1}{N}\big(|\nabla f|^2-\partial_tf)^2-K|\nabla f|^2\Big]\cdot\mu. $$
Now, by using the Kato's inequality to $F=\widetilde{F}_+$,   we have the desired estimate (\ref{eq5.5}).
The proof of this lemma is finished.
\end{proof}

We are ready to prove the following local Li-Yau's estimate under some additional assumptions.
\begin{lem}\label{lem5.9}
Given $K\gs0$ and $N\in[1,\infty)$,  let $(X,d,\mu)$ be a metric measure space satisfying $RCD^*(-K,N)$.
Let $T\in(0,\infty)$ and let $u(x,t)\in H^{1}(B_{2R,T})\cap L^\infty(B_{2R,T})$ be a  locally weak  solution of the heat equation on $B_{2R,T}$.
Assume that $u\gs\delta>0$ and $\partial_tu\in  H^{1}(B_{2R,T})\cap L^{\infty}(B_{2R,T})$.

Then, for any $\alpha>1$ and any $\beta,\gamma\in(0,1)$, the following local gradient estimate holds
 \begin{equation}\label{eq5.9}
 \begin{split}
 \sup_{B_R\times(\gamma\cdot T,T] }\Big(|\nabla f|^2-\alpha\cdot \frac{\partial}{\partial t}f\Big)(x,t) \ls& \max\bigg\{1, \frac{1}{2}+\frac{KT}{2(\alpha-1)}\bigg\} \cdot \frac{N\alpha^2}{2T} \cdot\frac{1}{(1-\beta)\gamma}\\
  &\ \  +
\frac{C_N\cdot  \alpha^4}{R^2 (\alpha-1)}\cdot\frac{1}{(1-\beta)\beta\gamma}+ \big(\frac{\sqrt K}{R}+\frac{1}{R^2}\big)\cdot\frac{C_N\cdot\alpha^2}{(1-\beta)\gamma},
 \end{split}
 \end{equation}
  where $f=\ln u$, and $C_{N}$ is a constant depending only on  $N$.
\end{lem}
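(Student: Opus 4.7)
The plan is to adapt the classical Li--Yau iteration to the non-smooth setting, replacing the pointwise maximum principle by the approximate parabolic maximum principle of Theorem~\ref{max-parabolic}. By Lemma~\ref{lem5.8}, $F=t[|\nabla f|^{2}-\alpha\partial_{t}f]_+\in H^{1}(B_{R,T})\cap L^{\infty}(B_{R,T})$ satisfies
\[
\La F+2\ip{\nabla f}{\nabla F}-\partial_tF\ \gs\ -\tfrac{F}{t}+2t\Bigl[\tfrac{(|\nabla f|^{2}-\partial_{t}f)^{2}}{N}-K|\nabla f|^{2}\Bigr]
\]
in the distributional sense. On $\{F>0\}$ the identity $|\nabla f|^{2}-\partial_{t}f=((\alpha-1)|\nabla f|^{2}+F/t)/\alpha$ turns the Bochner right-hand side into a positive quadratic form in $|\nabla f|^{2}$ and $F/t$ minus $F/t+2tK|\nabla f|^{2}$, reproducing exactly the algebraic structure exploited in the smooth argument.

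To localize I take a Lipschitz spatial cut-off $\phi$ from Lemma~\ref{lem2.4} with $\phi\equiv 1$ on a suitable inner ball, support in $B_R$, and scale-invariant bounds $|\nabla\phi|\ls C_N/((1-\beta)R)$, $|\La\phi|\ls C_N\bigl(\sqrt K/((1-\beta)R)+1/((1-\beta)R)^{2}\bigr)$, together with a temporal cut-off $\eta(t)=\min(t/(\gamma T),1)$, and set $G:=\eta\phi^{2}F$. Then $G\in H^{1}(B_{R,T})\cap L^{\infty}(B_{R,T})$ vanishes on the parabolic boundary. If $\sup G=0$ the estimate is trivial, so assume $\sup G>0$. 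After a small perturbation of $G$ ensuring a strict interior maximum (e.g.\ by subtracting $\varepsilon(T-t)$ and taking $\varepsilon\downarrow 0$), Theorem~\ref{max-parabolic} applied with drift $w=2f$ delivers a sequence $(x_j,t_j)$ with $\phi(x_j)>0$, $t_j>0$, $G(x_j,t_j)\gs\sup G-1/j$, and
\[
\La^{\mathrm{ac}}G(x_j,t_j)+\ip{\nabla G}{\nabla w}(x_j,t_j)-\partial_{t}G(x_j,t_j)\ls 1/j.
\]

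Using the Leibniz rule of Lemma~\ref{lem3.2} together with the algebraic identity $\eta\phi^{2}\nabla F=\nabla G-\eta F\nabla(\phi^{2})$, I rewrite both the Bochner cross term $-2\eta\phi^{2}\ip{\nabla f}{\nabla F}$ and the spurious cross term $2\eta\ip{\nabla(\phi^{2})}{\nabla F}$ as a $\ip{\nabla G}{\cdot}$ piece plus a lower-order $F|\nabla\phi|^{2}$-type piece. The $\ip{\nabla G}{\cdot}$ contribution is absorbed into the drift, either directly (for the $2\ip{\nabla f}{\nabla G}$ part) or by augmenting $w$ with the regularized $2\log(\phi+\varepsilon)$ and letting $\varepsilon\downarrow 0$ (for the $\ip{\nabla(\phi^{2})}{\nabla G}/\phi^{2}$ part). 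Young's inequality then absorbs $4\eta\phi F\ip{\nabla f}{\nabla\phi}$ into the good fourth-power term $\tfrac{2t(\alpha-1)^{2}}{N\alpha^{2}}\eta\phi^{2}|\nabla f|^{4}$ at cost proportional to $1/((1-\beta)^{2}R^{2})$; the remaining error contributions from $F\La(\phi^{2})$, $F|\nabla\phi|^{2}$ and $\eta'\phi^{2}F$ produce respectively the $\sqrt K/R$, $1/R^{2}$ and $1/(\gamma T)$ summands appearing in the statement.

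The concluding step is standard Li--Yau algebra: completing the square against $-2tK|\nabla f|^{2}$ (which costs $\tfrac{NK^{2}\alpha^{2}t}{2(\alpha-1)^{2}}$) leaves a quadratic inequality in $G(x_j,t_j)$, whose solution gives an upper bound with the required dependence on $\alpha,\beta,\gamma,R,T,K,N$; letting $j\to\infty$ and using $\phi=\eta=1$ on the inner region yields the stated bound on $B_R\times(\gamma T,T]$ after the appropriate rescaling of the inner/outer ball radii. The principal obstacle is the control of the cross term $2\eta\ip{\nabla(\phi^{2})}{\nabla F}$, which is neither naturally of the form $\ip{\nabla G}{\cdot}$ nor sign-favourable; handling it requires either the singular-drift regularization described above or the additional non-negative $\tfrac{N}{N-1}(\cdots)^{2}$ term supplied by the improved Bochner inequality of Corollary~\ref{lem6.1} to create room for absorption. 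The perturbation ensuring a strict maximum and the limit $\varepsilon\downarrow 0$ in the drift regularization are routine but unavoidable technical ingredients.
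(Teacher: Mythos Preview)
Your overall architecture---Lemma~\ref{lem5.8} for the differential inequality on $F$, a localized quantity $G$, and Theorem~\ref{max-parabolic} to extract near-maximum points---is exactly the paper's. The substantive divergence, and the place where your sketch has a real gap, is the cut-off. You take a compactly supported $\phi$ and propose to absorb the cross term $\ip{\nabla(\phi^{2})}{\nabla G}/\phi^{2}$ into the drift via $w=2f-c\log(\phi+\varepsilon)$ with $\varepsilon\downarrow0$. But the discrepancy between $\tfrac{1}{\phi}\ip{\nabla\phi}{\nabla G}$ and $\tfrac{1}{\phi+\varepsilon}\ip{\nabla\phi}{\nabla G}$ carries a pointwise factor $|\nabla G|$ (equivalently $|\nabla F|$), which lies only in $L^{2}$ and is not controlled at the specific points $(x_j,t_j)$ produced by Theorem~\ref{max-parabolic}; you have no mechanism to show this error vanishes. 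Your alternative of invoking Corollary~\ref{lem6.1} does not help either: the extra non-negative term there is $\tfrac{N}{N-1}\big(\tfrac{\ip{\nabla f}{\nabla|\nabla f|^{2}}}{2|\nabla f|^{2}}-\tfrac{g}{N}\big)^{2}$, which involves only the \emph{radial} derivative $\ip{\nabla f}{\nabla|\nabla f|^{2}}$ and does not dominate $|\nabla|\nabla f|^{2}|^{2}$ or $|\nabla F|^{2}$, so it creates no room to absorb $\ip{\nabla\phi}{\nabla F}$.

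The paper sidesteps this by building $\phi=\phi(r)$ from the distance function with the crucial feature that $\phi$ is \emph{bounded below}: setting $M_{1}=\sup_{B_{R,T}}F$ and $M_{2}=\sup_{B_{3R/2,T}}F$, one takes $\phi\equiv1$ on $B_{R}$, $\phi\equiv M_{1}/(2M_{2})$ on $B_{3R/2}\setminus B_{5R/4}$, and $M_{1}/(2M_{2})\ls\phi\ls1$ throughout. Then $\ln\phi\in L^{\infty}$, so the drift $w=2f-2\ln\phi$ lies in $H^{1}\cap L^{\infty}$ and is directly admissible in Theorem~\ref{max-parabolic} with no limiting procedure; the solution-dependent floor $M_{1}/(2M_{2})$ simultaneously forces the strict-maximum hypothesis, since $G=\phi F\ls\tfrac{M_{1}}{2M_{2}}\cdot M_{2}=\tfrac{M_{1}}{2}$ on the outer annulus while $\sup G\gs M_{1}$. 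The scale-correct bounds $|\nabla\phi|^{2}/\phi\ls C_{1}/R^{2}$ and $\La\phi\gs-C_{2}(\sqrt{K}/R+1/R^{2})$ come from the Laplacian comparison theorem for the distance function, not from Lemma~\ref{lem2.4} (whose constants are not stated in this sharp form). Finally, no temporal cut-off $\eta$ is needed: the built-in factor $t$ in $F$ already gives vanishing at $t=0$, and the strict interior maximum in time follows from $F/t\in L^{\infty}$.
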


\begin{proof}
 From the previous Lemma \ref{lem5.8},  we have $F:=t\cdot[|\nabla f|^2-\alpha\cdot  \partial_tf]_+\in L^\infty(B_{3R/2,T}).$
Put
  $$M_1:=\sup_{B_{R,T} }F\qquad{\rm and}\qquad M_2:=\sup_{B_{3R/2,T} }F.$$
We can assume $M_1>0$. If not, we are done.

Now let us choose   $\phi(x)=\phi(r(x))$ to be a function of the distance $r$ to the fixed point $x_0$ with the following property that
$$  \frac{M_1}{2M_2}\ls \phi \ls1\ \ {\rm on}\ \ B_{3R/2},\qquad \phi =1\ \ {\rm on}\ \ B_R,\qquad \phi = \frac{M_1}{2M_2}\ \ {\rm on}\ \ B_{3R/2}\backslash B_{5R/4},$$
and
$$ - \frac{C}{R}\phi ^{\frac 1 2}\ls  \phi '(r)  \ls 0\quad {\rm and}\quad  |\phi ''(r)|   \ls \frac{C}{R^2} \qquad \forall\ r\in(0,3R/2)$$
for some  universal constant $C$ (which is independent of  $N,K,R$). Then we have
\begin{equation}\label{eq5.10}
\frac{|\nabla \phi |^2}{\phi }= \frac{|\phi '|^2|\nabla r|^2}{\phi }\ls  \frac{C^2}{R^2}:=\frac{C_1}{R^2} \quad\ {\rm on}\ \    B_{3R/2},
\end{equation}
and, by the Laplacian comparison theorem \cite[Corollary 5.15]{gig15} for $RCD^*(-K,N)$ with $N>1$ and $K>0$,   that
\begin{equation}\label{eq5.11}
\begin{split}
 \La\phi =\phi' \La r+\phi''|\nabla r|^2&\gs-\frac{C}{R} \Big(\sqrt{(N-1)K}\coth\big(r\sqrt{\frac{K}{N-1}}\big)\Big)  - \frac{C}{R^2} \\
 &   \gs -\frac{C}{R} \Big(\sqrt{(N-1)K }+\frac{N-1}{R}\Big)  - \frac{C}{R^2} \gs-C_2(\frac{\sqrt K}{ R}+\frac 1{R^2})
 \end{split}
 \end{equation}
on $  B_{3R/2}$, in the sense of distributions, where we have used that
$$\coth\big(r\sqrt{\frac{K}{N-1}}\big)\ls \coth\big(R\sqrt{\frac{K}{N-1}}\big)\ls  1+\frac{1}{R\sqrt{K/(N-1)}}. $$
We claim that the estimate  (\ref{eq5.11}) still holds for  $RCD^*(-K,N)$ with $N\gs1$ and $K\gs0$. Indeed,
in the case when $K=0$ and $N>1$, the Laplacian comparison theorem states $\La r\ls (N-1)/r$. Then (\ref{eq5.11}) still holds. In the case when $N=1$,
since that $(X,d,\mu)$ satisfies  $RCD^*(-K,N)$ implies that it satisfies  $RCD^*(-K,N+1)$, we can use the Laplacian comparison theorem for $RCD^*(-K,N+1)$ to conclude that (\ref{eq5.11}) still holds in this case. Therefore, the claim is proved.

 Here and in the sequel of this proof, we denote $C_{1},C_2,C_3,\cdots$  the various constants which  depend only on  $N$. (\ref{eq5.11}) implies that the distribution $\La \phi$ is a signed Radon measure (since
$\La \phi+C_2(\sqrt K/R+1/R^2)$ is a positive distribution). Then its absolutely continuous part $(\La \phi)^{\rm ac}\gs -C_2(\sqrt K/R+1/R^2)$ a.e. $x\in B_{3R/2}$ and its singular part $(\La \phi)^{\rm sing}\gs0.$

Put $G(x,t):= \phi F$. According to Lemma \ref{lem5.8}  and the Lebiniz rule \ref{lem3.2}(ii), we have  $G\in H^{1}(B_{3R/2,T})$  and, for almost every $t\in(0,T)$,   that the function $G(\cdot, t)$ satisfies that
$$\La G=F\La \phi+\phi\La F+2\ip{\nabla \phi}{\nabla F}$$
in the sense of distributions.  Fix arbitrarily a such $t\in(0,T)$. Then $\La G$ is a signed Radon measure on $B_{3R/2}$ with
\begin{equation}\label{eq5.12}
(\La G)^{\rm sing}=F(\La \phi)^{\rm sing}+\phi(\La F)^{\rm sing}\gs0
\end{equation}
and
$(\La G)^{\rm ac}=F(\La \phi)^{\rm ac}+\phi(\La F)^{\rm ac}+2\ip{\nabla \phi}{\nabla F}$ a.e. $x\in B_{3R/2}$. We have, for almost all $x\in B_{3R/2}$,
\begin{equation} \label{eq5.13}
\begin{split}
(\La G)^{\rm ac}-\partial_t G  +2\ip{\nabla f}{\nabla G} =&  \phi\Big((\La F)^{\rm ac} -\partial_t F  +2\ip{\nabla f}{\nabla F}\Big)\\
&\ +  F(\La \phi)^{\rm ac}+2\ip{\nabla \phi}{\nabla F}  +2\ip{\nabla f}{\nabla\phi}F.
\end{split}
\end{equation}
By (\ref{eq5.5}) and $G= \phi F$,  we have, for almost all $x\in  B_{3R/2}$,   that, for any fixed $\epsilon>0$,
  \begin{equation}\label{eq5.14}
\begin{split}
{\rm RHS \ of}\  (\ref{eq5.13})\overset{(\ref{eq5.5})}{\gs}&  \phi\Big[-\frac{F}{t}+2t\Big(\frac{1}{N}\big(|\nabla  f|^2-\partial_tf\big)^2-K|\nabla f|^2\Big)\Big] \\
&\ +G\frac{(\La \phi)^{\rm ac}}{\phi}+2\ip{\nabla \phi}{\nabla (G/\phi)}  +2\ip{\nabla f}{\nabla\phi}\frac{G}{\phi} \\
\gs\ & -\frac{G}{t}+2t\phi\Big[  \frac{1}{N}\big(|\nabla  f|^2-\partial_tf\big)^2-K|\nabla f|^2 \Big]\\
&\ +\frac{G}{\phi}\Big[-C_2\big(\frac{\sqrt K}{R}+\frac{1}{R^2}\big)-\frac{2C_1}{R^2}\Big] +2 \ip{\nabla \phi}{\nabla  G}/\phi  -2|\nabla f|\frac{|\nabla\phi|}{\phi}\cdot G\\
\gs\ & -\frac{G}{t}+2t\phi\Big[  \frac{1}{N}\big(|\nabla  f|^2-\partial_tf\big)^2-K|\nabla f|^2 \Big]\\
&\ -C_3\frac{G}{\phi} \big(\frac{\sqrt K}{R}+\frac{1}{R^2}\big)   +2 \ip{\nabla \phi}{\nabla  G}/\phi  -\epsilon \frac{G^2}{\phi}\frac{C_1}{R^2}-|\nabla f|^2\frac{1}{\epsilon},
\end{split}
\end{equation}
 where we have used (\ref{eq5.10}), (\ref{eq5.11}) and that, for any $\epsilon>0$, the following
$$2|\nabla f|\cdot G\frac{|\nabla \phi|}{\phi}\ls \epsilon G^2\frac{|\nabla\phi|^2}{\phi^2}+|\nabla f|^2\frac{1}{\epsilon}\ls  \epsilon \frac{G^2}{\phi}\cdot\frac{C_1}{R^2}+|\nabla f|^2\frac{1}{\epsilon}. $$

If we put
$$v=\frac{|\nabla f|^2}{F}$$
then we get $|\nabla f|^2=F\cdot v$ and
$$ F=t(|\nabla f|^2-\alpha\cdot\partial_tf)=t(F\cdot v-\alpha\cdot\partial_tf).$$
So
$$ \partial_tf=\frac{F(vt-1)}{\alpha t}.$$
Therefore we obtain
\begin{equation}\label{eq5.15}
\begin{split}
 -\frac{G}{t}+&2t \phi\Big[\frac{1}{N}\big(|\nabla f|^2-\partial_tf)^2-K|\nabla f|^2\Big]-\epsilon^{-1}|\nabla f|^2\\
 &= -\frac{G}{t}+ \phi\frac{2F^2}{N\alpha^2t}\Big((\alpha-1)vt+1\Big)^2-2tK \phi vF-\epsilon^{-1}v F\\
&\gs  -\frac{G}{t \phi }+\frac{2G^2}{N\alpha^2t \phi }\Big((\alpha-1)vt+1\Big)^2-\frac{2tKvG}{ \phi} -\epsilon^{-1}v \frac{G}{ \phi},
\end{split}
\end{equation}
where we have used that $0< \phi\ls 1$  and $KvG\gs0$. Denoting by
$$z:=(\alpha-1)vt\qquad {\rm and}\qquad A_\epsilon:=\frac{2Kt+\epsilon^{-1}}{\alpha-1},$$
we have
\begin{equation*}
 {\rm RHS\ of}\ (\ref{eq5.15})=\frac{1}{ \phi}\cdot\bigg(\frac{2G^2}{N\alpha^2t}\big(z+1\big)^2-\frac{G}{t}\big(1+A_\epsilon z\big)\bigg).
\end{equation*}
Finally $z\gs0$ implies that
$$\frac{1+A_\epsilon z}{(1+z)^2}\ls \max\Big\{1,\frac{1}{2}+\frac{A_\epsilon}{4}\Big\}\ls \max\Big\{1,\frac{1}{2}+\frac{Kt}{2(\alpha-1)}\Big\}+\frac{\epsilon^{-1}}{4(\alpha-1)}.$$
Denote by $$B_0:=\max\Big\{1,\frac{1}{2}+\frac{KT}{2(\alpha-1)}\Big\},$$
we have $\frac{1+A_\epsilon z}{(1+z)^2}\ls B_0+\frac{\epsilon^{-1}}{4(\alpha-1)},$ (since $K\gs0$ and $t\ls T$)
so \begin{equation*}
 {\rm RHS\ of}\ (\ref{eq5.15})\gs \frac{1}{ \phi}\cdot \frac{G}{t}\cdot\Big(\frac{2G}{N\alpha^2}-B_0-\frac{\epsilon^{-1}}{4(\alpha-1)}\Big)\cdot\big(z+1\big)^2 .
\end{equation*}
By combining this with (\ref{eq5.13}), (\ref{eq5.14}) and (\ref{eq5.15}), we obtain that
\begin{equation}\label{eq5.16}
\begin{split}
&(\La G)^{\rm ac} -\partial_t G  +2\ip{\nabla f}{\nabla G} -2 \ip{\nabla \phi}{\nabla  G}/\phi\\
 &\qquad\gs \frac{1}{ \phi}\cdot \frac{G}{t}\cdot\Big(\frac{2G}{N\alpha^2}-B_0-\frac{\epsilon^{-1}}{4(\alpha-1)}\Big)\cdot\big(z+1\big)^2\ -C_3\frac{G}{\phi} \big(\frac{\sqrt K}{R}+\frac{1}{R^2}\big)  -\epsilon \frac{G^2}{\phi}\frac{C_1}{R^2}.
\end{split}
\end{equation}
From the definition of $\phi$ and $F/t\in L^\infty(B_{3R/2,T})$ (by Lemma \ref{lem5.8}), we see that $G$ achieves one of its strict maximum in $B_{3R/2,T}$ in the sense of Theorem \ref{max-parabolic}.
By (\ref{eq5.12}), we know that $\La^{\rm sing}G\gs0$. Notice also $\partial_tf\in L^\infty(B_{2R,T})$ since $u\gs \delta>0$ and $\partial_tu\in H^1(B_{2R,T})\cap L^\infty(B_{2R,T})$
Hence, by using  Theorem \ref{max-parabolic} with  $w:=2f-2\ln\phi\in   H^{1}(B_{3R/2,T})\cap L^\infty(B_{3R/2,T})$, and combining with (\ref{eq5.16}), we conclude that there exit a sequence $\{x_j,t_j\}_{j\in\mathbb N}$ such that, for each $j\in\mathbb N$,
\begin{equation}\label{equation5.17}
G_j:=G(x_j,t_j)\gs \sup_{B_{3R/2,T}}G-1/j
\end{equation}
and that
\begin{equation}\label{equation5.18}
\begin{split}
 \frac{G_j}{t_j}&\cdot\Big(\frac{2G_j}{N\alpha^2}-B_0-\frac{\epsilon^{-1}}{4(\alpha-1)}\Big)\cdot\big(z(x_j,t_j)+1\big)^2 -C_3G_j\cdot \big(\frac{\sqrt K}{R}+\frac{1}{R^2}\big) -\epsilon G_j^2\cdot\frac{C_1}{R^2}\\
& \ls  \phi(x_j,t_j)\cdot\frac{1}{j}\ls \frac{1}{j}.
\end{split}
\end{equation}
We consider firstly the case when
$$\bar G:=\sup_{B_{3R/2,T}} G>\frac{N\alpha^2}{2}\Big(B_0+\frac{\epsilon^{-1}}{4(\alpha-1)}\Big).$$
In this case, the equation (\ref{equation5.17}) tells us $G_j\gs \frac{N\alpha^2}{2}\big(B_0+\frac{\epsilon^{-1}}{4(\alpha-1)}\big)$ for all sufficiently large $j$. Thus, from
(\ref{equation5.18}), we have
$$ \frac{G_j}{t_j}\cdot\Big(\frac{2G_j}{N\alpha^2}-B_0-\frac{\epsilon^{-1}}{4(\alpha-1)}\Big) -C_3G_j\cdot \big(\frac{\sqrt K}{R}+\frac{1}{R^2}\big) -\epsilon G_j^2\cdot\frac{C_1}{R^2} \ls \frac{1}{j}.$$
Letting $j\to\infty$, we have
$$ \frac{\bar G}{T}\cdot\Big(\frac{2\bar G}{N\alpha^2}-B_0-\frac{\epsilon^{-1}}{4(\alpha-1)}\Big)\ls C_3\bar G\cdot \big(\frac{\sqrt K}{R}+\frac{1}{R^2}\big) +\epsilon \bar G^2\cdot\frac{C_1}{R^2},$$
where we have used $t_j\ls T$ for all $j\in\mathbb N$. Thus, we have
\begin{equation}\label{equation5.19}
\bar G\ls \frac{B_0+\frac{\epsilon^{-1}}{4(\alpha-1)}+ C_3T\cdot \big(\frac{\sqrt K}{R}+\frac{1}{R^2}\big)}{ \frac{2}{N\alpha^2}-  \epsilon T\cdot\frac{C_1}{R^2}}.
\end{equation}
In the case when $\bar G\ls \frac{N\alpha^2}{2}\big(B_0+\frac{\epsilon^{-1}}{4(\alpha-1)}\big)$, it is clear that (\ref{equation5.19}) still holds.

Fix any $\beta\in(0,1)$. By choosing $\epsilon=2\beta R^2/(C_1\cdot N\alpha^2T)$. Then we conclude, by (\ref{equation5.19}), that
\begin{equation}\label{equation5.20}
\begin{split}
\bar G&\ls \frac{B_0+\frac{C_1\cdot N\alpha^2\cdot T}{8(\alpha-1)\cdot\beta R^2}+ C_3T\cdot \big(\frac{\sqrt K}{R}+\frac{1}{R^2}\big)}{ \frac{2}{N\alpha^2}(1-\beta)}\\
&=B_0\cdot \frac{N\alpha^2}{2}\cdot \frac{1}{1-\beta}+
\Big(\frac{C_1\cdot N^2\alpha^4 \cdot T}{16(\alpha-1)\cdot\beta R^2}+ C_3T\cdot \big(\frac{\sqrt K}{R}+\frac{1}{R^2}\big)\cdot\frac{N\alpha^2}{2}\Big)\cdot \frac{1}{1-\beta}\\
&\ls B_0\cdot \frac{N\alpha^2}{2}\cdot\frac{1}{1-\beta}+
\frac{C_4\cdot  \alpha^4\cdot T}{ (\alpha-1)R^2}\cdot\frac{1}{(1-\beta)\cdot\beta}+ C_5T\cdot \big(\frac{\sqrt K}{R}+\frac{1}{R^2}\big)\cdot\frac{\alpha^2}{1-\beta}.
\end{split}
\end{equation}
Therefore, we have
\begin{equation*}
\begin{split}
 \sup_{B_R\times(\gamma\cdot T,T]}F&\ls \sup_{B_{R,T}  }F \ls \sup_{B_{3R/2,T}}G\\
 &  \ls B_0\cdot \frac{N\alpha^2}{2}\cdot\frac{1}{1-\beta}+
\frac{C_4\cdot  \alpha^4\cdot T}{ (\alpha-1)R^2}\cdot\frac{1}{(1-\beta)\cdot\beta}+ C_5T\cdot \big(\frac{\sqrt K}{R}+\frac{1}{R^2}\big)\cdot\frac{\alpha^2}{1-\beta}.
\end{split}
\end{equation*}
By recalling
$F=t(|\nabla f|^2-\alpha\cdot  \partial_tf)_+$ and $B_0=\max\big\{1,\frac{1}{2}+\frac{KT}{2(\alpha-1)}\big\}$,
we conclude that the  local gradient estimate (\ref{eq5.9}) holds, since $t>\gamma\cdot T.$
This completes the proof.
 \end{proof}

Now, let us remove the additional assumption  $\partial_tu\in  H^{1}(B_{2R,T})\cap L^{\infty}(B_{2R,T})$ and  prove   Theorem \ref{thm1.4}.
\begin{proof}[Proof of Theorem \ref{thm1.4}] Let $\alpha>1$ and $\beta\in(0,1)$. Without loss of generality, we can assume that $T_*<\infty.$
Given any  $\delta>0$, from  \cite[Theorem 2.2]{stu96}, we have $u+\delta\in L_{\rm loc}^\infty(B_{2R,T_*}).$ Without loss the generality, we can assume that $u+\delta\in L^\infty(B_{2R,T_*}),$ since the desired result is a local estimate.

Given any $\varepsilon>0$, according to Lemma \ref{lem5.4} and Lemma \ref{lem5.6}, we can use Lemma \ref{lem5.9} to the Steklov averages $(u+\delta)_h$. Then, by an approximating argument (and  taking $\gamma=1-\beta$), we have
\begin{equation*}
 \begin{split}
\sup_{B_R\times((1-\beta)T,T] }\Big(\frac{|\nabla u|^2}{(u+\delta)^2}-\alpha\cdot\frac{\partial_tu}{u+\delta}\Big)(x,t) \ls& \max\bigg\{1, \frac{1}{2}+\frac{KT}{2(\alpha-1)}\bigg\} \cdot \frac{N\alpha^2}{2T} \cdot\frac{1}{(1-\beta)^2}\\
  & +
\frac{C_N\cdot  \alpha^4}{R^2 (\alpha-1)}\cdot\frac{1}{(1-\beta)^2\beta }+ \big(\frac{\sqrt K}{R}+\frac{1}{R^2}\big)\cdot\frac{C_N\cdot\alpha^2}{(1-\beta)^2}.
\end{split}
 \end{equation*}
  Letting $\delta (\in \mathbb Q)$ tend to $0^+$ and replacing $1-\beta$ by $\beta$, we have the desired (\ref{eq1.6}). By combining with the arbitrariness of $\varepsilon$, we complete the proof of   Theorem \ref{thm1.4}.
\end{proof}

\section{A sharp local Yau's gradient estimate}

Let $K\gs0 $, $N\in(1,\infty)$ and let $(X,d,\mu)$ be a metric measure space satisfying $RCD^*(-K,N)$.  Suppose that  $\Omega$  is a domain in $X$.
In this section, we will prove a sharp local Yau's gradient estimate---Theorem \ref{thm1.6}.

\begin{proof}[Proof of Theorem \ref{thm1.6}] Fix $\beta\in(0,1)$.
Let $u$ be a positive harmonic function on $B_{2R}:=B_{2R}(p)$  and let $f=\log u$. Without loss of generality, we can assume that $u\gs\delta$ for some $\delta>0$.  By  the chain rule \ref{lem3.2}(ii),   a direct computation shows that
$$\La f=-|\nabla f|^2\quad {\rm on }\quad B_{2R}.$$
Since $|\nabla f|\in L^\infty_{\rm loc}(B_{2R})$, by setting $g:=|\nabla f|^2$ and using  Corollary \ref{lem6.1},  (noticing that $N>1$)   we know that
$g\in H^1(B_{3R/2})\cap L^\infty(B_{3R/2}) $ and
$\La^{\rm sing}g\gs0$ and,
 for  $\mu$-a.e. $  x\in \big\{y:\ g(y) \not=0\big\}\cap B_{3R/2}$,
\begin{equation} \label{eq6.2}
\begin{split}
\frac{1}{2} \La^{\rm ac}g\gs& \frac{g^2}{N}- \ip{\nabla g}{\nabla f}-Kg+\frac{N}{N-1}\cdot\Big(\frac{\ip{ \nabla f}{\nabla g}}{2g}+\frac{g}{N}\Big)^2\\
 =& \frac{g^2}{N}- \ip{\nabla g}{\nabla f}-Kg+\frac{N}{N-1}\cdot\bigg[\Big(\frac{\ip{ \nabla f}{\nabla g}}{2g}\Big)^2+\frac{2\ip{ \nabla f}{\nabla g}}{2g}\cdot \frac{g}{N}+\Big(\frac{g}{N}\Big)^2\bigg]\\
\gs & \frac{g^2}{N-1}- \frac{N-2}{N-1}\cdot\ip{\nabla g}{\nabla f}-Kg.
\end{split}
\end{equation}

Since $g\in L^\infty(B_{3R/2})$, we define
$$M_1:=\sup_{B_R}g\quad {\rm and}\quad  M_2:=\sup_{B_{3R/2}}g.$$
We assume that $M_1>0$ (otherwise, we are done). Now let us choose   $\phi(x)=\phi(r(x))$ as above. That is, $\phi(x)$ is
 a function of the distance $r$ to the fixed point $x_0$ with the following property that
$$  \frac{M_1}{2M_2}\ls \phi \ls1\ \ {\rm on}\ \ B_{3R/2},\qquad \phi =1\ \ {\rm on}\ \ B_R,\qquad \phi = \frac{M_1}{2M_2}\ \ {\rm on}\ \ B_{3R/2}\backslash B_{5R/4},$$
and
$$ - \frac{C}{R}\phi ^{\frac 1 2}\ls  \phi '(r)  \ls 0\quad {\rm and}\quad  |\phi ''(r)|   \ls \frac{C}{R^2} \qquad \forall\ r\in(0,3R/2)$$
for some universal constant $C$ (which is independent of  $N,K,R$). Then we have, from (\ref{eq5.10})-(\ref{eq5.11}), that
\begin{equation} \label{equation6.2}
\frac{|\nabla \phi |^2}{\phi }\ls \frac{C_1}{R^2}\qquad {\rm and}\qquad \La\phi \gs-C_2(\frac{\sqrt K}{ R}+\frac 1{R^2})
\end{equation}
on $  B_{3R/2}$.  Then the distribution $\La \phi$ is a signed Radon measure and its absolutely continuous part $(\La \phi)^{\rm ac}\gs -C_2(\sqrt K/R+1/R^2)$ a.e. $x\in B_{3R/2},$ and its singular part $(\La \phi)^{\rm sing}\gs0.$ Here and in the sequel of this proof, we denote $C_{1},C_2,C_3,\cdots$  the various constants which  depend only on  $N$.

Put $G(x):= \phi\cdot g$.
According to  the Lebiniz rule \ref{lem3.2}(ii), we have  $G\in H^{1}(B_{3R/2})$  and
$$\La G=g\La \phi+\phi\La g+2\ip{\nabla \phi}{\nabla g}$$
in the sense of distributions. Then, by $\La^{\rm sing}g\gs0$ and $\La^{\rm sing}\phi\gs0$, we get $\La^{\rm sing}G\gs0$. The combination of (\ref{eq6.2}) and (\ref{equation6.2}) implies that
\begin{equation}\label{equation6.3}
\begin{split}
\La^{\rm ac}G \gs &\ {\phi}\La^{\rm ac}g+2\ip{\nabla \phi}{\nabla (G/\phi)}  + G\frac{(\La \phi)^{\rm ac}}{\phi} \\
\gs\ & 2\phi\Big( \frac{g^2}{N-1}- \frac{N-2}{N-1}\cdot\ip{\nabla g}{\nabla f}-Kg\Big)\\
& +2 \ip{\nabla \phi}{\nabla  G}/\phi+\frac{G}{\phi}\Big[-C_2\big(\frac{\sqrt K}{R}+\frac{1}{R^2}\big)-\frac{2C_1}{R^2}\Big] \\
\gs\ &  \frac{2}{\phi}\cdot \frac{G^2}{N-1}-\frac{2(N-2)}{N-1}\cdot\Big(\ip{\nabla G}{\nabla f}-G\ip{\nabla \phi}{\nabla f}/\phi\Big)-2KG \\
& +2 \ip{\nabla \phi}{\nabla  G}/\phi-C_3\cdot\frac{G}{\phi}  \big(\frac{\sqrt K}{R}+\frac{1}{R^2}\big)  \\
\gs\ &  \frac{2}{\phi}\cdot \frac{G^2}{N-1}-\frac{2(N-2)}{N-1}\cdot \ip{\nabla G}{\nabla f}-\frac{2(N-2)}{N-1}\cdot\Big( \epsilon \frac{G^2}{\phi}\cdot\frac{C_1}{R^2}+\frac{G}{\phi}\frac{1}{\epsilon}\Big)-2K\frac{G}{\phi} \\
& +2 \ip{\nabla \phi}{\nabla  G}/\phi-C_3\cdot\frac{G}{\phi}  \big(\frac{\sqrt K}{R}+\frac{1}{R^2}\big)
\end{split}
\end{equation}
 for any $\epsilon>0$, where we have used  $g=|\nabla f|^2=G/\phi$, $2KG\ls 2KG/\phi$ and that, for any $\epsilon>0$, the following
$$-G\ip{\nabla \phi}{\nabla f}/\phi\ls 2|\nabla f|\cdot G\frac{|\nabla \phi|}{\phi}\ls \epsilon G^2\frac{|\nabla\phi|^2}{\phi^2}+|\nabla f|^2\frac{1}{\epsilon}\ls  \epsilon \frac{G^2}{\phi}\cdot\frac{C_1}{R^2}+|\nabla f|^2\frac{1}{\epsilon}. $$
From the definition of $\phi$, we know that $G$ achieves one of its strict maximum in $B_{3R/2}$ in the sense of Theorem \ref{max-elliptic}.
Notice   that $\La^{\rm sing}G\gs0$.
Hence, according to  Theorem \ref{max-elliptic} for $w:=2\frac{N-2}{N-1}f-2\ln\phi\in   H^{1}(B_{3R/2})\cap L^\infty(B_{3R/2})$ (since $u\gs \delta>0$), and by combining with (\ref{equation6.3}), we conclude that there exit  a sequence $\{x_j\}_{j\in\mathbb N}$ such that, for each $j\in\mathbb N$,
\begin{equation}\label{equation6.4}
G_j:=G(x_j)\gs \sup_{B_{3R/2}}G-1/j
\end{equation}
and that (noticing that $\phi\in(0,1]$)
\begin{equation}\label{equation6.5}
\begin{split}
 2& \frac{G_j^2}{N-1}-\frac{2(N-2)}{N-1}\cdot\Big( \epsilon  G_j^2 \cdot\frac{C_1}{R^2}+G_j\frac{1}{\epsilon}\Big)-2KG_j
  -C_3\cdot G_j  \big(\frac{\sqrt K}{R}+\frac{1}{R^2}\big)\\
  &\quad\ls \phi(x_j)\cdot\frac{1}{j}\ls \frac{1}{j}
\end{split}
\end{equation}
 for any $\epsilon>0$. Letting $j\to\infty$ and denoting $\bar G:=\sup_{B_{3R/2}}G=\lim_j G_j$, we obtain
\begin{equation}\label{equation6.6}
\Big(\frac{1}{N-1}-\frac{(N-2)\epsilon\cdot C_1}{(N-1)R^2}\Big)\cdot \bar G\ls K+\frac{N-2}{(N-1)\epsilon}+\frac{C_3}{2}\big(\frac{\sqrt K}{R}+\frac{1}{R^2}\big)
\end{equation}
 for any $\epsilon>0$.

In the case when $N>2$, by  choosing $\epsilon=\frac{\beta\cdot R^2}{(N-2)\cdot C_1}$, we obtain from (\ref{equation6.6}) that
 \begin{equation*}
 \begin{split}
  \frac{1-\beta}{N-1}\cdot \bar G&\ls  K+ \frac{C_1\cdot (N-2)^2}{\beta R^2}+\frac{C_3}{2}\big(\frac{\sqrt K}{R}+\frac{1}{R^2}\big)\\
 &\ls K+ \frac{C_1\cdot (N-2)^2}{\beta R^2}+\beta K+\frac{C_3^2}{16\beta R^2}+\frac{C_3}{2R^2},
 \end{split}
 \end{equation*}
where we have used
$$ \frac{C_3}{2} \frac{\sqrt K}{R}=2\sqrt K\cdot \frac{C_3}{4R} \ls \beta K+\frac{1}{\beta} \frac{C_3^2}{(4R)^2}.$$
Then, we get
 \begin{equation}\label{equation6.7}
 \begin{split}
  \bar G&\ls  \frac{1+\beta}{1-\beta}(N-1) K  + \frac{N-1}{1-\beta}\cdot\frac{1}{\beta R^2}\Big( C_1\cdot (N-2)^2+\frac{C_3^2}{16}+\frac{C_3\beta}{2}\Big)\\
  &\ls  \frac{1+\beta}{1-\beta}(N-1) K  + \frac{C_4}{\beta(1-\beta)\cdot R^2},
 \end{split}
 \end{equation}
where we have used $\beta<1$.

In the case when $N\in (1,2]$,  from (\ref{equation6.6}), we have
 $$\frac{1}{N-1}\cdot \bar G \ls  K+\frac{C_3}{2}\big(\frac{\sqrt K}{R}+\frac{1}{R^2}\big)
 \ls K+\beta K+\frac{C_3^2}{16\beta R^2}+\frac{C_3}{2R^2}.$$
Thus, the estimate (\ref{equation6.7}) still holds in this case.

Therefore, the equation (\ref{equation6.7}) shows that, for any $\beta\in (0,1)$,
 \begin{equation*}
\sup_{B_R}g \ls  \frac{1+\beta}{1-\beta}(N-1) K  + \frac{C_4}{\beta(1-\beta)\cdot R^2}.
 \end{equation*}
 Now the proof is finished.
\end{proof}

\end{document}